\newtheorem{theorem}{Theorem}
\newtheorem{corollary}[theorem]{Corollary}
\newtheorem{definition}[theorem]{Definition}
\newtheorem{example}[theorem]{Example}
\newtheorem{lemma}[theorem]{Lemma}
\newtheorem{proposition}[theorem]{Proposition}
\newtheorem{remark}[theorem]{Remark}
\newenvironment{proof}[1][Proof]{\noindent\textbf{#1.} }{\ \rule{0.5em}{0.5em}}
\begin{document}

\begin{center}
\bigskip

\bigskip

\bigskip

\bigskip

\bigskip

\bigskip

\bigskip

\bigskip

\textbf{ON PRODUCT AND GOLDEN STRUCTURES}

\textbf{AND}

\textbf{HARMONICITY}

\bigskip

Sadettin ERDEM

\bigskip

{\small Mathematics Department}

{\small Faculty of Science}

{\small Anadolu University}

{\small Eski\c{s}ehir-Turkey}\newpage

\textbf{ON PRODUCT AND GOLDEN STRUCTURES}

\textbf{AND}

\textbf{HARMONICITY}

.

Sadettin ERDEM

{\small Mathematics Department}

{\small Faculty of Science}

{\small Anadolu University}

{\small Eski\c{s}ehir-Turkey}

\ 
\end{center}

\textbf{Abstract:} {\small In this work, almost product and almost golden
structures are studied. Conditions for those structures being Integrable and
parallel are investigated. Also harmonicity of a map between almost pruduct
or almost golden manifolds with pure or hyperbolic metric is discussed under
certain conditions.}

\ 

\textbf{Key words:} {\small Golden structure, golden map, pure and
hyperbolic metric, product structure, harmonic maps.}

\ 

\textbf{2010 AMS Mathematics Subject classification:} {\small 53C15, 58E20}.

\ 

\section{1. Introduction}

Let $\ \ M^{n}$ \ be a smooth manifold of dimension \ $n$ \ with a \ $(1,1)$%
-tensor field $\ \ \varphi $ \ \ of rank \ \ $n.$\ Then $\left( see\ \left[ 
\mathbf{3,\ 4,\ 5,\ 8,\ 12,\ 13}\right] \right) $ the pair \ $\left(
M,\varphi \right) $ \ is called \textit{polynomial} \textit{manifold }%
provided \ $\mathcal{U}\left( \varphi \right) =0$ \ for some polynomial $\ \ 
\mathcal{U}\left( x\right) $ \ \ over the field of real numbers \ $%
\mathbb{R}
.$ \ In particular, \ \ $\varphi $ \ \ and \ $\left( M,\varphi \right) $ \ \
are respectively called

$i)$ \ \ \textit{metallic} \textit{structure} \ and \ \textit{metallic} 
\textit{manifold }if \ \ $\mathcal{U}\left( x\right) =x^{2}-\eta x-\delta $
\ \ for some positive integers \ $\eta $ $\ $and $\ \delta ,$\ \ \ so that \
\ $\mathcal{U}\left( \varphi \right) =\varphi ^{2}-\eta \varphi -\delta =0.$

$ii)$ \ \textit{almost complex} \textit{structure} \ and \textit{almost
complex manifold \ }if \ \ $\mathcal{U}\left( x\right) =x^{2}+1,$ \ \ so
that \ \ $\mathcal{U}\left( \varphi \right) =\varphi ^{2}+I=0.$

$iii)$ \ \textit{\ almost} \textit{product structure} \ and \ \textit{almost}
\textit{product manifold }if \ $\mathcal{U}\left( x\right) =x^{2}-1.$ \ \ In
this case we reserve the letter \ $P$ \ \ for \ $\varphi .$ $\ \ $Thus $\ \ 
\mathcal{U}\left( P\right) =P^{2}-I=0.$

$iv)$ \ \ \textit{almost} \textit{golden} \textit{structure} \ and \ \textit{%
almost} \textit{golden} \textit{manifold }if \ $\mathcal{U}\left( x\right)
=x^{2}-x-1.$ \ \ In this case we reserve the letter \ $G$ \ \ for \ $\varphi
.$ $\ \ $Thus $\ $\ $\mathcal{U}\left( G\right) =G^{2}-G-I=0.$

Where \ $I$ $\ $denotes$\mathcal{\ }$the identity tensor field.$\mathcal{\ }%
\mathcal{\ }$

Note here that an almost golden manifold \ \ $\left( M,G\right) $ \ is in
fact a metallic manifold with \ \ $\eta =\delta =1.$ Golden structure has
been catching more attention of many geometers \ $\left( see:\text{for
example}\ \left[ \mathbf{3,\ 4,\ 12,\ 13}\right] \right) $ \ in the last few
years as it is closely related to the golden ratio which plays an important
role in various disciplines such as physics, topology, probability, field
theory etc. ($see$ \ $\left[ \mathbf{3,\ 4}\right] $ \ and the references
therein )

In this work, we have dealt with almost product and almost golden structures
simultaneously as one can be obtained from the other, and provided the
following results besides some side ones:

Let \ \ $\varphi $ \ denote either almost product structure \ $P$ \ or \
almost golden structure \ $G.$ \ To emphasize this we shall be writing \ \ $%
\varphi \left( =P,G\right) .$ On an almost product or an almost golden
manifold \ $\left( M,\ h,\ \varphi \left( =P,G\right) \right) $ \ with pure
or hyperbolic metric \ $h,$\ $\left( \text{Definition }\left( 2.1\right)
\right) $.\ 

$1)$ \ By analogy with the result for the paracomplex case, \ we introduce a
condition \ $\varphi \left( \ast \right) $ \ (\textit{see page 12, just
before Proposition \ (2.5)}) which, together with the integrability
condition of \ $\varphi ,$ \ guarantees that \ $\varphi $ \ is parallel, $%
\left( \text{Proposition }\left( 2.5\right) \right) $\ 

$2)$ \ For the bilinear operator \ $S_{\varphi }:\Gamma \left( TM\right)
\times \Gamma \left( TM\right) \rightarrow \Gamma \left( TM\right) $ \ ($%
see: $ right after Definition $\left( 2.4\right) $) it is shown that
vanishing of $S_{\varphi }$ \ is equivalent to that of $\varphi $ \ being
parallel$,$ $\left( \text{Proposition }\left( 2.6\right) \right) $, \ unlike
the case in which the metric \ $h$ \ is hyperbolic, vanishing of \ $%
S_{\varphi }$ \ does not imply that $\varphi $ \ is parallel.\ Instead, it
provides a bigger class whose members are called \ quasi para-Hermitian
manifolds, quasi golden-Hermitian manifolds \ $\left( \text{Definition }%
\left( 2.5\right) \right) $.

$3)$ \ We introduced a subclass of \ $\left( M,\ h,\ \varphi \left(
=P,G\right) \right) ,$ \ namely, a class of semi decomposable product (or
golden ) Riemannian manifolds $\left( \text{Definition }\left( 2.4\right)
\right) $ and that used later on for the harmonicity of certain map from, $%
\left( \text{Theorems }\left( 3.1\right) \text{ }\&\ \left( 3.2\right)
\right) $.

$4)$ \ By analogy with the concept of \textit{anti-paraholomorphic map}, a
concept of \textit{antigolden map} is introduced $\left( \text{Definition }%
\left( 3.\ 2\right) \right) $ and that later it is used for its harmonicity, 
$\left( \text{Theorems }\left( 3.1\right) \text{ }\&\ \left( 3.2\right)
\right) $.

$5)$ \ It is shown that being a golden ($resp:$ paraholomorphic) map of an
almost golden ($resp:$ almost product) manifold with a pure metric is no way
sufficient for its harmonicity where as it is sufficient \ when the metric
is hyperbolic. However, on the same line, an alternative result is provided,
\ $\left( \text{Theorem }\left( 3.1\right) \right) $.

$6)$ \ Finally,$\left( \text{Theorems }\left( 3.1\right) \text{ }\&\ \left(
3.2\right) \right) ,$ for a non-constant map \ $F:\left( M,\ h,\ \varphi
\left( =P,G\right) \right) \rightarrow \left( N,\ g,\ \varphi \left(
=Q,K\right) \right) ,$ \ where \ $h$ \ and \ $g$ \ are hyperbolic, the
harmonicity result given for \ $\pm \left( P,Q\right) $-paraholomorphic map
\ $F$,\ $\left( \left[ \mathbf{2,\ 7,\ 11}\right] \right) ,$ \ is extended
to the cases where

\begin{itemize}
\item $F$ $\ $is $\ \pm \left( P,Q\right) $-paraholomorphic and \ $h$ \ is
hyperbolic,\ $g$ \ is pure.\ 

\item $F$ $\ $is $\ \pm \left( G,K\right) $-golden and \ $h$ \ is
hyperbolic,\ $g$ \ is pure or hyperbolic.\ 
\end{itemize}

$7)$ \ Overall, we have managed so far to express the results involving
almost golden structures in terms of almost product structures.

\section{2. Definitions and some basic results}

The structure \ $\varphi \left( =P,G\right) $ \ \ on \ $M^{n}$ \ has two
distinct real eigenvalues, namely; $\ k\ $\ and $\ \overline{k}.$ \ Let
denote the corresponding eigendistributions by \ $\ \mathcal{E}_{\left(
k\right) },\ $and $\ \ \mathcal{E}_{\left( \overline{k}\right) }.$ \ 

Note that, \ $(see$ \ $\left[ \mathbf{2,\ 3,\ 4,\ 5,\ 8,\ 12,\ 13}\right] ),$

$1)$ \ \ $\varphi :TM\mathcal{\rightarrow }TM$ \ \ is an isomorphism.

$2)$ \ \ $TM=\mathcal{E}_{\left( k\right) }\oplus \mathcal{E}_{\left( 
\overline{k}\right) }$ \ \ .

$3)$ \ \ For an almost product manifold \ $\left( M,P\right) $ \ we have \ 

\begin{itemize}
\item $k=1\ \ \ $and $\ \ \overline{k}=-1.$ \ 

\item $P^{2}\left( X\right) =X,\ \ \ \ \forall \ \ X\in \Gamma \left(
TM\right) $
\end{itemize}

$4)$ \ \ For an almost golden manifold \ $\left( M,\ G\right) $ \ we have

\begin{itemize}
\item $k=\frac{1}{2}\left( 1+\sqrt{5}\right) \ \ \ $and $\ \ \overline{k}=%
\frac{1}{2}\left( 1-\sqrt{5}\right) .$
\end{itemize}

Through out this work we shall be setting $\ \ \sigma =\frac{1}{2}\left( 1+%
\sqrt{5}\right) $ $\ \ $and $\ \ \bar{\sigma}=\frac{1}{2}\left( 1-\sqrt{5}%
\right) .$ \ Observe that,%
\begin{equation*}
\sigma ^{2}=\sigma +1,\ \ \ \ \bar{\sigma}^{2}=\bar{\sigma}+1\text{ \ \ \
and \ \ \ }\sigma \bar{\sigma}=-1.
\end{equation*}

\begin{itemize}
\item $G^{2}\left( X\right) =GX+X,\ \ \ \ \forall \ \ X\in \Gamma \left(
TM\right) .$
\end{itemize}

$\bigskip 5)$

\begin{itemize}
\item for every almost product structure \ \ $P$, \ \ define a $P$\textit{%
-associated }$(1,1)$-\textit{tensor field} \ \ $G_{P}=\mathcal{G}$ \ \ by%
\begin{equation*}
G_{p}=\mathcal{G}=\frac{1}{2}\left( I+\sqrt{5}P\right)
\end{equation*}

\item for every almost golden structure \ \ $G$, \ \ define a $G$-associated 
$(1,1)$-tensor field \ \ $P_{{\small G}}=\mathcal{R}$ \ \ by 
\begin{equation*}
P_{{\small G}}=\mathcal{R}=\frac{1}{\sqrt{5}}\left( 2G-I\right)
\end{equation*}
\end{itemize}

Note that,

$i)\ $\ for every almost product structure \ $P$ \ on \ $M,$ \ the
corresponding \ $G_{p}=\mathcal{G}$ \ is an almost golden structure on \ $M$
\ and \ therefore it will be called \ $P$\textit{-associated} \textit{almost
golden structure.}

$ii)$\ \ for every almost golden structure \ $G$ \ on $\ M,$\ the
corresponding \ $P_{{\small G}}=\mathcal{R}$ \ is an almost product
structure on \ $M$ \ and \ therefore it will be called \ $G$\textit{%
-associated} \textit{almost product structure.}

$iii)$ \ we have%
\begin{equation*}
\mathcal{E}_{\left( \sigma \right) }^{\mathcal{G}}=\mathcal{E}_{\left(
1\right) }^{{\small P}}\ \ \ \ \ \ \ \ \ \ \ \ \ and\ \ \ \ \ \ \ \ \ \ \ \ 
\mathcal{E}_{\left( \bar{\sigma}\right) }^{\mathcal{G}}=\mathcal{E}_{\left(
-1\right) }^{{\small P}}
\end{equation*}

$iv)$\ \ there is a one-to-one correspondence between the set of \ all
almost product structures and the set of all almost golden structures on a
manifold \ \ $M.$ \ We shall be calling \ \ $\left\{ P,\ G_{_{P}}\right\} $
\ (or \ \ $\left\{ G,\ P_{_{G}}\right\} $ )\ \textit{an associated pair }or%
\textit{\ \ a twin pair.} we also say that \ $\left\{ P,\ G_{_{P}}\right\} $
\ (or \ \ $\left\{ G,\ P_{_{G}}\right\} $ ) \ are \textit{twins}. It is easy
to see that \ for a given pair of twin structures $\ \left\{ P,\
G_{_{P}}\right\} ,$ \ the $\ \ G_{_{P}}$-associated almost product structure
is equal to \ $P$, \ \ that is, \ \ \ $\ $%
\begin{equation*}
P_{\left( {\small G}_{P}\right) }=P
\end{equation*}

Similarly, for a twin pair \ $\left\{ G,\ P_{_{G}}\right\} ,$ \ the $\ \
P_{_{G}}$-associated almost golden structure is equal to \ $G$, \ \ that is,%
\begin{equation*}
G_{\left( P_{G}\right) }=G
\end{equation*}

$v)$\ \ If \ $P$ \ is an almost product structure on \ $M$ \ then \ \ \ $%
\widehat{P}=-P$ \ is also an almost product structure on \ $M.$ \ Observe
that \ \ $P$ \ and \ $\widehat{P}$ \ have the same eigenvalues $\ 1$ \ and \ 
$-1$. \ However for their corresponding eigendistributions we have 
\begin{equation*}
\text{\ \ }\mathcal{E}_{\left( 1\right) }^{P}=\mathcal{E}_{\left( -1\right)
}^{\widehat{P}}\ \text{\ \ \ \ }\ \text{and}\ \ \ \ \ \text{\ }\mathcal{E}%
_{\left( 1\right) }^{\widehat{P}}=\mathcal{E}_{\left( -1\right) }^{P}\ \ \ \ 
\end{equation*}%
We shall be calling \ $\widehat{P},$ \ \textit{the conjugate almost product
structure of} \ $P$ \ or \ \ \textit{the }$P$-\textit{conjugate almost
product structure}

$vi)$\ \ \ If \ $G$ \ is an almost golden structure on \ $M$ \ then \ \ \ \ $%
\widehat{G}=I-G$ \ is also an almost golden structure on \ $M.$ \ Observe
that \ \ $G$ \ and \ $\widehat{G}$ \ have the same eigenvalues $\ \sigma $ \
and \ $\bar{\sigma}$. \ However for their corresponding eigendistributions
we have%
\begin{equation*}
\mathcal{E}_{\left( \sigma \right) }^{G}=\mathcal{E}_{\left( \bar{\sigma}%
\right) }^{\widehat{G}},\ \ \ \ \ \ \ \ \ \ \text{and}\ \ \ \ \ \ \ \ \ \ \
\ \ \ \ \text{\ }\mathcal{E}_{\left( \bar{\sigma}\right) }^{G}=\mathcal{E}%
_{\left( \sigma \right) }^{\widehat{G}}
\end{equation*}%
We shall be calling \ $\widehat{G},$ \ \textit{the conjugate almost golden
structure of} \ $G$ \ or \ \ \textit{the }$G$-\textit{conjugate almost
golden structure.}

$vii)$ \ \ If $\ \left\{ P,\ G\right\} $ \ \ is a twin pair then $\ \widehat{%
G}=G_{\widehat{p}}=\frac{1}{2}\left( I+\sqrt{5}\widehat{\left(
P_{_{G}}\right) }\right) =\frac{1}{2}\left( I-\sqrt{5}P_{_{G}}\right) ,$\
that is, $\ \left\{ \widehat{P},\ \widehat{G}\right\} $ \ is also a twin
pair. Conversely, if \ $\left\{ \widehat{P},\ \widehat{G}\right\} $ \ is a
twin pair then $\ G=\frac{1}{2}\left( I+\sqrt{5}P\right) ,\ \ \ $that is, \ $%
\ \left\{ P,\ G\right\} $ \ \ is also a twin pair

$viii)$ \ If $\ \ \left\{ \widehat{P},\ \widehat{G}\right\} $ \ is a twin
pair then $\ $%
\begin{equation*}
\mathcal{E}_{\left( 1\right) }^{P}=\mathcal{E}_{\left( -1\right) }^{\widehat{%
P}}=\mathcal{E}_{\left( \sigma \right) }^{G}=\mathcal{E}_{\left( \bar{\sigma}%
\right) }^{\widehat{G}}\ \ \ \ \ \text{and}\ \ \ \ \mathcal{E}_{\left(
1\right) }^{\widehat{P}}=\mathcal{E}_{\left( -1\right) }^{P}=\mathcal{E}%
_{\left( \bar{\sigma}\right) }^{G}=\mathcal{E}_{\left( \sigma \right) }^{%
\widehat{G}}.
\end{equation*}

$\blacksquare $

An almost product manifold\textit{\ \ }$\left( M,\ P\right) $ \ is called \ 
\textit{an almost\ paracomplex manifold} if the eigendistributions \ $\ 
\mathcal{E}_{\left( 1\right) },\ $and $\ \ \mathcal{E}_{\left( -1\right) }$
\ are of the same rank, $\left( \left[ \mathbf{2,\ 5}\right] \right) $. An
almost golden manifold\textit{\ \ }$\left( M,\ G\right) $ \ is called \ 
\textit{an almost\ para-golden manifold} if the eigendistributions \ $\ 
\mathcal{E}_{\left( \sigma \right) },\ $and $\ \ \mathcal{E}_{\left( \bar{%
\sigma}\right) }$ \ are of the same rank. \ It clear from their definitions
that \textit{an }almost\ paracomplex manifold \textit{\ }$\left( M,\
P\right) $ \ and an\ almost\ para-golden manifold\textit{\ \ }$\left( M,\
G\right) $ \ are necessarily of even dimensions.

\begin{definition}
(2.1/A): \ \ Let $M$ \ \ be a smooth manifold together with a \ $(1,1)$ \
tensor field \ $\varphi \left( =P,G\right) $ \ and a Riemannian metric \ $h$
\ satisfying%
\begin{equation}
h\left( \varphi X,\ Y\right) =\ h\left( X,\ \varphi Y\right) ;\ \ \ \ \
\forall \ X,\ Y\in \Gamma \left( TM\right) .  \tag{$\left( \ast \right) $}
\end{equation}
\end{definition}

\textit{Then}

$\ i)$ \ \ $\left( M,\ h,\ P\right) $ \ \ is called \textit{\ almost product
Riemannian manifold, }$\left[ \mathbf{8}\right] $\textit{.}

$ii)$ \ $\ \ \left( M,\ h,\ G\right) $ \ \ is called \textit{almost golden
Riemannian manifold, }$\left( \left[ \mathbf{4,\ 8}\right] \right) .\ $%
\textit{.}

We refer the condition$\ \ \left( \ast \right) \ \ $as \textit{the
compatibility of \ \ }$h$\textit{\ \ \ and \ \ }$\varphi .$\ \ We also say \
"$\ h$ \ \textit{is pure with respect to} \ \ $\varphi \ "$\ \ if $\ \ h$%
\textit{\ \ \ and \ \ }$\varphi $ \ \ are compatible, and call \ $h$ \ 
\textit{pure metric (with respect to \ \ }$\varphi $\textit{).} \ \ Note
here that the eigendistributions $\mathcal{\ \ E}_{\left( k\right) }\ \ $and 
$\ \ \mathcal{E}_{\left( \bar{k}\right) }$ \ \ are $h$-orthogonal.

$iii)$ \ \textit{\ }An almost product Riemannian manifold\textit{\ \ \ }$%
\left( M,\ h,\ P\right) $ \ and \ its metric \ $h$ \ \ are also called 
\textit{almost }\ \textbf{B}\textit{-manifold} and \textbf{B}\textit{-metric}
respectively if the eigendistributions \ $\ \mathcal{E}_{\left( 1\right) }\
\ $and $\ \ \mathcal{E}_{\left( -1\right) }$ \ are of the same rank, $\left[ 
\mathbf{12}\right] $.

$iv)$ \ \textit{\ }An almost golden Riemannian manifold\textit{\ \ \ }$%
\left( M,\ h,\ P\right) $ \ is also called \textit{almost para-golden
Riemannian manifold}\ if the eigendistributions \ $\ \mathcal{E}_{\left(
\sigma \right) },\ $and $\ \ \mathcal{E}_{\left( \bar{\sigma}\right) }$ \
are of the same rank.

\begin{definition}
(2.1/B): \ \ Let \ $M$ \ \ be a smooth manifold together with a \ $(1,1)$ \
tensor field \ $\varphi \left( =P,\ G\right) $ \ and a nondegenerate metric
\ $h$ \ satisfying%
\begin{equation}
h\left( \varphi X,\ Y\right) =\ h\left( X,\ \widehat{\varphi }Y\right) ;\ \
\ \ \ \forall \ X,\ Y\in \Gamma \left( TM\right) . 
\tag{$\left(
\ast
\ast
\right) $}
\end{equation}
\end{definition}

Then

$i)$ \ \ $\left( M,\ h,\ P\right) $ \ \ is called \textit{almost
para-Hermitian manifold, }$\left[ \mathbf{2}\right] .$\textit{\ \ }

$ii)$ \ $\ \left( M,\ h,\ G\right) $ \ \ is called \textit{almost
golden-Hermitian manifold}.

In this case, we refer the conditions $\ \left( \ast \ast \right) \ \ \ $as 
\textit{the hyperbolic compatibility of \ \ }$h$\textit{\ \ \ and \ \ }$%
\varphi .$ \ \ We also say \ "$h$ \ is \textit{hyperbolic with respect to }\
\ $\varphi "\ $\ \ if $\ \ h$\textit{\ \ \ and \ \ }$\varphi $ \ \ are
hyperbolic compatible, and call \ $h$ \ \textit{hyperbolic metric (with
respect to \ \ }$\varphi $\textit{)} \ \ \ \ \ \ $\blacksquare $

Note here that the\ hyperbolic case differs from the pure one. To be precise:

On a manifold \ \ $\left( M,\ h,\ \varphi \right) $ \ \ with a \ hyperbolic
metric $\ h$ \ (with respect to \ \ $\varphi $) one has,

$1)$ \ \ $h\left( PX,\ Y\right) =h\left( X,\ \widehat{P}Y\right) =-h\left(
X,\ PY\right) ;\ \ \ \ \ \forall \ X,\ Y\in \Gamma \left( TM\right) .$ \
Therefore we have \ \ \ 
\begin{equation*}
h\left( PX,\ X\right) =0,\ \ \ \ \forall \ X\in \Gamma \left( TM\right)
\end{equation*}%
unlike the pure case where, for example, 
\begin{equation*}
h\left( PX,\ X\right) =h\left( X,\ X\right) ,\ \ \ \ \forall \ X\in \Gamma
\left( \mathcal{E}_{\left( 1\right) }^{P}\right)
\end{equation*}

$2)$ \ \ $h\left( GX,\ Y\right) =h\left( X,\ \widehat{G}Y\right) ;\ \ \
\forall \ X,\ Y\in \Gamma \left( \mathcal{D}\right) .$ Therefore we have \ 
\begin{equation*}
2h\left( GX,\ X\right) =h\left( X,\ X\right) =2h\left( \widehat{G}X,\
X\right) ,\ \ \ \ \ \ \ \forall \ X\in \Gamma \left( TM\right) .
\end{equation*}

$3)$ \ \ $h\left( X,\ Y\right) =0;\ \ \ \ \forall \ X,\ Y\in \Gamma \left( 
\mathcal{E}_{\left( k\right) }^{\varphi }\right) \ \ \ \ \ $or $\ \ \ \ \
\forall \ X,\ Y\in \Gamma \left( \mathcal{E}_{\left( \bar{k}\right)
}^{\varphi }\right) .$ \ That is, hyperbolic metric\textit{\ }\ $h$ \ is
null on the eigendistributions \ $\mathcal{E}_{\left( k\right) }^{\varphi }\
\ \ $and $\ \ \ \mathcal{E}_{\left( \bar{k}\right) }^{\varphi }$ \ (and
therefore the hyperbolic metric is necessarily semi-Riemannian where as the
pure metric is taken to be Riemannian.)

Indeed, let \ \ $X,\ Y\in \Gamma \left( \mathcal{E}_{\left( k\right)
}^{\varphi }\right) $ \ \ then \ \ \ $h\left( \varphi X,\ Y\right) =kh\left(
X,\ Y\right) $ $\ \ $and $\ \ \ h\left( X,\ \widehat{\varphi }Y\right) =\bar{%
k}h\left( X,\ Y\right) .\ \ \ $On the other hand ,\ \ $h\left( \varphi X,\
Y\right) =h\left( X,\ \widehat{\varphi }Y\right) $ $\ \ $since $\ h\ \ $is
hyperbolic. $\ $So$\ \ \ kh\left( X,\ Y\right) =\bar{k}h\left( X,\ Y\right)
, $ \ \ \ which gives \ \ $\left( k-\bar{k}\right) h\left( X,\ Y\right) =0,$
\ \ so that \ \ \ $h\left( X,\ Y\right) =0;\ \ \forall \ X,\ Y\in \Gamma
\left( \mathcal{E}_{\left( k\right) }^{\varphi }\right) .$ By\ the same
argument we get \ \ $\forall \ X,\ Y\in \Gamma \left( \mathcal{E}_{\left( 
\bar{k}\right) }^{\varphi }\right) $\ \ \ \ \ \ \ \ $h\left( X,\ Y\right)
=0. $

\begin{lemma}
(2.2/A): $\left( \left[ \mathbf{2}\right] \right) $\ \ Let \ $\left( M,\ h,\
P\right) $\ $\ $be an \textit{almost para-Hermitian }manifold. Then
\end{lemma}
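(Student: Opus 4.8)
The lemma records the basic identities satisfied by the covariant derivative $\nabla P$ on an almost para-Hermitian manifold, and the plan is to derive all of them from exactly two inputs: the hyperbolic compatibility $(\ast\ast)$, which here reads $h(PX,Y)=h(X,\widehat P Y)=-h(X,PY)$, and the fact that $\nabla$ is the Levi-Civita connection of $h$, so that $\nabla h=0$ and $\nabla$ is torsion-free. Nothing else is needed; the whole argument is a matter of differentiating the algebraic relations that define the structure and feeding the outcomes back into one another.

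First I would fix $X,Y,Z\in\Gamma(TM)$ and apply $\nabla_X$ to $h(PY,Z)=-h(Y,PZ)$. Since $\nabla h=0$, the terms in which $\nabla_X$ falls on $Y$ or on $Z$ pair off and cancel once $(\ast\ast)$ is used again, leaving
\[
h\big((\nabla_X P)Y,\,Z\big)=-\,h\big(Y,\,(\nabla_X P)Z\big),
\]
so each $\nabla_X P$ is $h$-skew-symmetric; equivalently the $(0,3)$-tensor $(X,Y,Z)\mapsto h((\nabla_X P)Y,Z)$ is skew in its last two arguments. Next I would differentiate $P^2=I$, getting $(\nabla_X P)P+P(\nabla_X P)=0$, i.e. $\nabla_X P$ anticommutes with $P$. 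Combining these two facts with $P^2=I$ and $(\ast\ast)$ gives the remaining identities by pure linear algebra: for instance $h((\nabla_X P)PY,PZ)=h((\nabla_X P)Y,Z)$, and $(\nabla_X P)Y\in\Gamma(\mathcal{E}_{(-1)}^{P})$ whenever $Y\in\Gamma(\mathcal{E}_{(1)}^{P})$ and conversely.

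If the lemma also states a Koszul-type expansion of $\nabla P$ in terms of the fundamental $2$-form $\Omega(\cdot,\cdot)=h(\cdot,P\cdot)$ and the Nijenhuis torsion $N_P$, then I would run the standard computation: expand $d\Omega(X,Y,Z)$ as the alternating sum of the $X\,\Omega(Y,Z)$ terms minus the $\Omega([X,Y],Z)$ terms, replace each bracket using torsion-freeness of $\nabla$, rewrite every $\Omega$ as $h(\cdot,P\cdot)$, and collect; performing the same for $d\Omega(X,PY,PZ)$ and for $h(N_P(Y,Z),PX)$ and taking the correct combination isolates $h((\nabla_X P)Y,Z)$. The one place the para-case genuinely departs from the complex-Hermitian one is the sign bookkeeping forced by $P^2=+I$ — so that $h(PU,PV)=-h(U,V)$ rather than $+h(U,V)$ — and since every cancellation above hinges on that sign, I would carry it explicitly through each step instead of transcribing the almost-Hermitian formula. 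No step is deep; the only real obstacle is the sign discipline needed to state the identities in precisely the form Section 2 will later consume.
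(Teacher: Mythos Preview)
Your proposal addresses the wrong statement. Lemma (2.2/A) is not about identities for $\nabla P$; its content (the two items that follow the ``Then'' in the paper) is purely pointwise linear algebra:
\begin{itemize}
\item[$(i)$] the hyperbolic metric $h$ has neutral signature $(m,m)$ on $TM$;
\item[$(ii)$] the two eigendistributions satisfy $\operatorname{rank}\mathcal{E}^P_{(1)}=\operatorname{rank}\mathcal{E}^P_{(-1)}=m$.
\end{itemize}
No connection, no Nijenhuis tensor, no $d\Omega$ enters. The paper simply cites reference~[2] and gives no proof, but the argument it is pointing to is this: the paragraph just before the lemma already shows that $h$ vanishes identically on each eigendistribution (hyperbolic compatibility forces $h(X,Y)=0$ for $X,Y$ in the same eigenspace). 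Since $h$ is nondegenerate and $TM=\mathcal{E}^P_{(1)}\oplus\mathcal{E}^P_{(-1)}$, each eigenspace is a maximal isotropic subspace, hence of dimension exactly half of $\dim T_pM$; this gives $(ii)$ and forces $\dim M=2m$. The nondegenerate pairing $h:\mathcal{E}^P_{(1)}\times\mathcal{E}^P_{(-1)}\to\mathbb{R}$ then lets one build, from any basis $\{u_i\}$ of $\mathcal{E}^P_{(1)}$ and its $h$-dual basis $\{v_i\}$ of $\mathcal{E}^P_{(-1)}$, the orthonormal frame $\{\tfrac{1}{\sqrt 2}(u_i+v_i),\,\tfrac{1}{\sqrt 2}(u_i-v_i)\}$ of signature $(m,m)$, which is $(i)$.

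Everything you wrote about $h$-skewness of $\nabla_XP$, the anticommutation $(\nabla_XP)P+P(\nabla_XP)=0$, and the Koszul-type formula is correct mathematics and is in fact the content of Lemma~(2.5) later in Section~2; it just has nothing to do with Lemma~(2.2/A).
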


$\ i)$ \ \ $h$ \ \ is of signature \ \ $\left( m,\ m\right) $ \ on\ \ $TM$ \
for some \ $m.$

$ii)$ \ \ $rank\left( \mathcal{E}_{\left( 1\right) }^{P}\right) =rank\left( 
\mathcal{E}_{\left( -1\right) }^{P}\right) =m.$

\ \ \ \ \ \ \ \ \ \ \ \ \ \ \ \ \ \ \ \ \ \ \ \ \ \ \ \ \ \ \ \ \ \ \ \ \ \
\ \ \ \ \ \ \ \ \ \ \ \ \ \ \ \ \ \ \ \ \ \ \ \ \ \ \ \ \ \ \ \ \ \ \ \ \ \
\ \ \ \ \ \ \ \ \ \ \ \ \ \ \ \ \ \ \ \ \ $\blacksquare $

Having given an almost golden manifold \ \ $\left( M,\ h,\ G\right) $ \ with
a hyperbolic metric \ $h$, \ since \ $h$ \ is also hyperbolic with respect
to the product structure \ $P_{_{G}},$ \ by considering the almost
para-Hermitian manifold \ $\left( M,\ h,\ P_{_{G}}\right) $ \ and using the
above Lemma, we get:

\begin{lemma}
(2.2/B)$:$ \ \ Let \ \ $\left( M,\ h,\ G\right) $ \ \ be an \textit{almost
golden-Hermitian }manifold. Then
\end{lemma}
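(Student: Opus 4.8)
The conclusion should read, in parallel with Lemma (2.2/A): $h$ is of signature $\left( m,\ m\right) $ on $TM$ for some $m$, and $rank\left( \mathcal{E}_{\left( \sigma \right) }^{G}\right) =rank\left( \mathcal{E}_{\left( \bar{\sigma}\right) }^{G}\right) =m$; in particular $M$ is even-dimensional and $\left( M,\ h,\ G\right) $ is almost para-golden. The plan is to transfer the statement from the already-settled paracomplex case by means of the $G$-associated almost product structure $P_{_{G}}=\mathcal{R}=\frac{1}{\sqrt{5}}\left( 2G-I\right) $.

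First I would verify that hyperbolic compatibility of $h$ with $G$ implies hyperbolic compatibility of $h$ with $P_{_{G}}$, so that $\left( M,\ h,\ P_{_{G}}\right) $ becomes an almost para-Hermitian manifold in the sense of Definition (2.1/B). From $h\left( GX,\ Y\right) =h\left( X,\ \widehat{G}Y\right) =h\left( X,\ \left( I-G\right) Y\right) $ one extracts the symmetric identity $h\left( GX,\ Y\right) +h\left( X,\ GY\right) =h\left( X,\ Y\right) $, valid for all $X,\ Y\in \Gamma \left( TM\right) $. Substituting $G=\frac{1}{2}\left( I+\sqrt{5}P_{_{G}}\right) $ and using $\widehat{P_{_{G}}}=-P_{_{G}}$, a two-line computation shows that the required identity $h\left( P_{_{G}}X,\ Y\right) =h\left( X,\ \widehat{P_{_{G}}}Y\right) =-h\left( X,\ P_{_{G}}Y\right) $ is equivalent to the identity just displayed, hence holds.

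Next I would apply Lemma (2.2/A) to the almost para-Hermitian manifold $\left( M,\ h,\ P_{_{G}}\right) $: this yields that $h$ has signature $\left( m,\ m\right) $ on $TM$ for some $m$ and that $rank\left( \mathcal{E}_{\left( 1\right) }^{P_{_{G}}}\right) =rank\left( \mathcal{E}_{\left( -1\right) }^{P_{_{G}}}\right) =m$. Finally, since $\left\{ G,\ P_{_{G}}\right\} $ is a twin pair with $G_{\left( P_{_{G}}\right) }=G$, item (iii) of the list in Section 2 (applied to this twin pair) gives $\mathcal{E}_{\left( \sigma \right) }^{G}=\mathcal{E}_{\left( 1\right) }^{P_{_{G}}}$ and $\mathcal{E}_{\left( \bar{\sigma}\right) }^{G}=\mathcal{E}_{\left( -1\right) }^{P_{_{G}}}$, so the rank equalities pass over verbatim to the eigendistributions of $G$, and we are done.

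I do not expect a genuine obstacle. The only step of substance is the equivalence of the two compatibility identities in the first paragraph, and that is purely algebraic, using nothing beyond $\widehat{G}=I-G$, $\widehat{P_{_{G}}}=-P_{_{G}}$ and the linear relation between $G$ and $P_{_{G}}$; everything else is a direct appeal to Lemma (2.2/A) together with the already-recorded dictionary between the eigendistributions of a twin pair. The one small point to keep track of is the matching of eigenvalues — that $\sigma $, the larger root, corresponds to the $+1$-eigenspace of $P_{_{G}}$ — but this is forced by the explicit formula $P_{_{G}}=\frac{1}{\sqrt{5}}\left( 2G-I\right) $ and is exactly what item (iii) asserts.
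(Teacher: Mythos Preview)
Your proposal is correct and follows essentially the same route as the paper: the paper's argument is the single sentence preceding the lemma, which passes to the $G$-associated almost product structure $P_{_{G}}=\mathcal{R}$, notes that $h$ is then hyperbolic with respect to $P_{_{G}}$, and invokes Lemma~(2.2/A). Your write-up is in fact more explicit than the paper's, since you spell out the algebraic equivalence between the two hyperbolic-compatibility identities (a fact the paper only records later, in Proposition~(2.1)) and you make the eigendistribution matching via the twin-pair dictionary fully explicit.
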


\ $i)$ \ $\ h$ \ \ is of signature \ \ $\left( m,\ m\right) $ \ on\ \ $TM$ \
for some \ $m.$

$ii)$ \ \ $rank\left( \mathcal{E}_{\left( \sigma \right) }^{G}\right)
=rank\left( \mathcal{E}_{\left( \bar{\sigma}\right) }^{G}\right) =rank\left( 
\mathcal{E}_{\left( 1\right) }^{\mathcal{R}}\right) =m,\ \ \ \ $where \ $%
\mathcal{R}=P_{_{G}}.$

\begin{proposition}
(2.1): \ Let \textit{an almost product }structure \ \ $P$ \ \ and an almost 
\textit{golden} structure \ \ $G$ \ \ form a twin pair \ $\left\{ P,\
G\right\} $ \ on a smooth manifold \ \ $M.\ \ $For a nondegenerate metric \ $%
h$ \ on \ \ $M$ \ the following statements are equivalent:
\end{proposition}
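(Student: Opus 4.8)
The plan is to reduce every statement in the list to a single symmetry property of the operator $P$ (equivalently $G$) relative to $h$, using that the passage $P\leftrightarrow G$ of a twin pair, together with the conjugations $P\mapsto\widehat P=-P$ and $G\mapsto\widehat G=I-G$, are all \emph{affine} maps of endomorphisms whose only constant term is the identity $I$. Concretely, for a fixed $(1,1)$-tensor $\phi$ set
\begin{equation*}
\delta_\phi(X,Y)=h(\phi X,Y)-h(X,\phi Y),\qquad \delta^{\ast}_\phi(X,Y)=h(\phi X,Y)-h(X,\widehat\phi Y),
\end{equation*}
so that $h$ is pure with respect to $\phi$ exactly when $\delta_\phi\equiv0$ and hyperbolic with respect to $\phi$ exactly when $\delta^{\ast}_\phi\equiv0$. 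The engine of the proof is the trivial remark that $\phi\mapsto\delta_\phi$ is linear and kills the identity, $\delta_I\equiv0$; hence from $G=\tfrac12 I+\tfrac{\sqrt5}{2}P$ one gets $\delta_G=\tfrac{\sqrt5}{2}\,\delta_P$, and since $\sqrt5\neq0$ this already yields $\delta_G\equiv0\iff\delta_P\equiv0$. Together with $\delta_{\widehat P}=-\delta_P$ and $\delta_{\widehat G}=\delta_{I-G}=-\delta_G$, all the ``pure'' versions of the listed conditions collapse to one.

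For the ``hyperbolic'' versions the only point requiring attention is that $\widehat G=I-G$ carries the extra $I$. Expanding $h(GX,Y)-h(X,\widehat G Y)=\bigl(h(GX,Y)+h(X,GY)\bigr)-h(X,Y)$ and inserting $G=\tfrac12 I+\tfrac{\sqrt5}{2}P$, the $I$-contributions cancel and what survives is $\tfrac{\sqrt5}{2}\bigl(h(PX,Y)+h(X,PY)\bigr)=\tfrac{\sqrt5}{2}\,\delta^{\ast}_P(X,Y)$, using $\widehat P=-P$. Thus $h$ is hyperbolic with respect to $G$ iff it is hyperbolic with respect to $P$, and replacing $P,G$ by $\widehat P,\widehat G$ (which only flips signs, $\delta^{\ast}_{\widehat P}=-\delta^{\ast}_P$, $\delta^{\ast}_{\widehat G}=-\delta^{\ast}_G$) closes that circle as well. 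If the statement also lists the rank / \textbf{B}-manifold / para-golden refinements, I would close it using the identifications $\mathcal E^{\mathcal G}_{(\sigma)}=\mathcal E^{P}_{(1)}$ and $\mathcal E^{\mathcal G}_{(\bar\sigma)}=\mathcal E^{P}_{(-1)}$ already recorded in Section 2, which show the relevant eigendistributions coincide so their ranks agree automatically.

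I do not expect a genuine obstacle: the proposition is in essence a linear change of variables, and everything rests on the linearity of $\phi\mapsto\delta_\phi$ (and of $\phi\mapsto\delta^{\ast}_\phi$) and on $\sqrt5\neq0$. The only place where nondegeneracy of $h$ is really needed is in any clause phrased directly in terms of the eigendistributions $\mathcal E_{(k)}$, $\mathcal E_{(\bar k)}$ rather than the tensors themselves: there one passes between $\delta_\phi\equiv0$ (resp.\ $\delta^{\ast}_\phi\equiv0$) and the geometric statement --- $h$-orthogonality of the eigendistributions in the pure case, $h$-nullity in the hyperbolic case --- by the eigenvalue computation reproduced just before Lemma (2.2/A), and invokes nondegeneracy to exclude degenerate configurations. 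The remaining work is pure bookkeeping of the two conjugates and the affine shift by $I$.
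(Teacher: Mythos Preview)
Your proposal is correct and follows essentially the same approach as the paper: both arguments exploit the affine relation $G=\tfrac{1}{2}I+\tfrac{\sqrt5}{2}P$ (and its conjugate variants) together with the fact that the identity term contributes nothing to the (anti)symmetry condition, so the whole proof is a linear change of variable. The paper carries this out by a direct expansion for one pair of equivalences and declares the rest similar, whereas you package the same computation via the auxiliary tensors $\delta_\phi$ and $\delta^{\ast}_\phi$; note that the rank/eigendistribution clauses you hedge about are not part of Proposition~(2.1) (they appear only in Proposition~(2.2)), and nondegeneracy of $h$ plays no role in the actual argument.
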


$\ i)$ \ \ $h$ \ \ is pure \ $\left[ resp:\text{ hyperbolic}\right] $ \ with
respect to \ \ $P.\ \ $

$ii)$ \ \ $h$ \ \ is pure \ $\left[ resp:\text{ hyperbolic}\right] $ \ with
respect to \ \ $\widehat{P}.\ \ $

$iii)$ \ \ $h$ \ \ is pure \ $\left[ resp:\text{ hyperbolic}\right] $ \ with
respect to \ \ $G.\ \ $

$iv)$ \ \ $h$ \ \ is pure \ $\left[ resp:\text{ hyperbolic}\right] $ \ with
respect to \ \ $\widehat{G}.$

\begin{proof}
We only be showing the equivalence of \ $(i)$ $\ $and $\ (iv)$ \ as the rest
of the cases follow by the similar argument:
\end{proof}

Assume \ $(i),$ $\ $then \ \ $\forall \ X,\ Y\in \Gamma \left( TM\right) $ $%
\ \ \ \ \ \ \ \ \ \ \ \ \ \ \ \ \ \ \ \ \ \ \ \ \ \ \ \ \ \ \ \ \ \ $%
\begin{equation*}
\begin{tabular}{ll}
$h\left( X,\ \widehat{G}Y\right) =$ & $h\left( X,\ \frac{1}{2}\left( I+\sqrt{%
5}\ \widehat{P}\right) Y\right) =\frac{1}{2}h\left( X,\ Y\right) +\frac{%
\sqrt{5}}{2}h\left( X,\ \ \widehat{P}Y\right) $ \\ 
& $=\frac{1}{2}h\left( X,\ Y\right) -\frac{\sqrt{5}}{2}h\left( X,\ PY\right)
=\frac{1}{2}h\left( X,\ Y\right) -\frac{\sqrt{5}}{2}h\left( PX,\ Y\right) $
\\ 
& $=\frac{1}{2}h\left( X,\ Y\right) +\frac{\sqrt{5}}{2}h\left( \ \widehat{P}%
X,\ Y\right) =h\left( \frac{1}{2}\left( I+\sqrt{5}\ \widehat{P}\right) X,\
Y\right) =h\left( \widehat{G}X,\ Y\right) .$%
\end{tabular}%
\end{equation*}

Next assume $\ (ii)$ $\ $then \ \ $\forall \ X,\ Y\in \Gamma \left(
TM\right) $%
\begin{equation*}
\begin{tabular}{ll}
$h\left( X,\ PY\right) =$ & $-h\left( X,\ \ \widehat{P}Y\right) =-h\left(
X,\ \frac{1}{\sqrt{5}}\left( 2\widehat{G}-I\right) Y\right) $ \\ 
& $-\left[ -\frac{1}{\sqrt{5}}h\left( X,\ Y\right) +\frac{2}{\sqrt{5}}%
h\left( \widehat{G}X,\ Y\right) \right] =-\left[ h\left( \frac{1}{\sqrt{5}}%
\left( 2\widehat{G}-I\right) X,\ Y\right) \right] $ \\ 
& $=-h\left( \widehat{P}X,\ Y\right) =h\left( PX,\ Y\right) $%
\end{tabular}%
\end{equation*}

$\blacksquare $

We immediately get, from Proposition (2.1), the following

\begin{proposition}
(2.2): \ \ \ Let \textit{an almost product }structure \ \ $P$ \ \ and an
almost \textit{golden} structure \ \ $G$ \ \ form a twin pair \ $\left\{ P,\
G\right\} $ \ on a smooth manifold \ \ $M.$
\end{proposition}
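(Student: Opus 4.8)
The plan is to treat Proposition~(2.2) as nothing more than the translation of Proposition~(2.1) into the language of the named manifold classes of Definitions~(2.1/A) and~(2.1/B): once the purity/hyperbolicity dictionary of Proposition~(2.1) is in hand, each asserted equivalence is read off by unwinding a definition. Concretely, ``$h$ is pure with respect to $P$'' is, by Definition~(2.1/A), precisely the statement that $(M,h,P)$ is an almost product Riemannian manifold; likewise ``$h$ pure with respect to $G$'' is the defining condition of an almost golden Riemannian manifold, and the hatted readings name the $\widehat P$- and $\widehat G$-structures. Similarly, by Definition~(2.1/B), ``$h$ hyperbolic with respect to $P$'' is exactly ``$(M,h,P)$ almost para-Hermitian'' and ``$h$ hyperbolic with respect to $G$'' is exactly ``$(M,h,G)$ almost golden-Hermitian''. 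So the first block of Proposition~(2.2) is obtained by applying the fourfold equivalence $(i)\Leftrightarrow(ii)\Leftrightarrow(iii)\Leftrightarrow(iv)$ of Proposition~(2.1) verbatim, once in the ``pure'' reading and once in the ``hyperbolic'' reading.

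For the refined classes that carry an equal-rank condition on the eigendistributions (almost paracomplex / almost $\mathbf{B}$-manifold on the product side, almost para-golden on the golden side), the extra ingredient is the identification of eigendistributions across a twin pair recorded in item~$5$, namely $\mathcal E^{\mathcal G}_{(\sigma)}=\mathcal E^{P}_{(1)}$ and $\mathcal E^{\mathcal G}_{(\bar\sigma)}=\mathcal E^{P}_{(-1)}$ (together with their hatted analogues in part~(viii) of that item), combined with the splitting $TM=\mathcal E_{(k)}\oplus\mathcal E_{(\bar k)}$. These give $\operatorname{rank}\mathcal E^{P}_{(1)}=\operatorname{rank}\mathcal E^{P}_{(-1)}$ if and only if $\operatorname{rank}\mathcal E^{G}_{(\sigma)}=\operatorname{rank}\mathcal E^{G}_{(\bar\sigma)}$, so the equal-rank hypothesis is a property of the twin pair itself and not of which member one happens to name. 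Combining this with the first block yields the equivalence of the corresponding ``$\mathbf{B}$-metric / para-golden'' statements; when $h$ is moreover nondegenerate one may further invoke Lemmas~(2.2/A) and~(2.2/B) to append the signature $(m,m)$ and the common value $m=\operatorname{rank}\mathcal E^{P}_{(1)}=\operatorname{rank}\mathcal E^{G}_{(\sigma)}=\operatorname{rank}\mathcal E^{\mathcal R}_{(1)}$ to the list.

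There is really no hard step here: Proposition~(2.2) is a corollary whose proof is pure bookkeeping, which is presumably why the text says it follows ``immediately'' from Proposition~(2.1). The only place one must be slightly careful is in keeping the eigendistribution correspondences straight when the conjugate structures $\widehat P,\widehat G$ enter --- that is, remembering $\mathcal E^{\widehat P}_{(1)}=\mathcal E^{P}_{(-1)}$, $\mathcal E^{\widehat G}_{(\sigma)}=\mathcal E^{G}_{(\bar\sigma)}$, and the chain of equalities in part~(viii) of item~$5$ --- and in noting that the signature assertion, being the part that genuinely uses Lemma~(2.2), requires the metric to be nondegenerate rather than merely formally pure or hyperbolic. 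Everything else is a direct substitution.
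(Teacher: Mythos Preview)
Your approach is correct and matches the paper's own: the paper simply says ``We immediately get, from Proposition~(2.1), the following'' and states Proposition~(2.2) without further argument, which is precisely your first paragraph --- read off the equivalences of Proposition~(2.1) through Definitions~(2.1/A) and~(2.1/B). Your second paragraph, however, treats material (equal-rank eigendistributions, $\mathbf{B}$-manifolds, para-golden classes, Lemmas~(2.2/A--B), signature $(m,m)$) that is \emph{not} part of the stated Proposition~(2.2): the proposition only asserts the eight equivalences in blocks~(A) and~(B), none of which carries an equal-rank hypothesis, so that extra discussion is unnecessary for the proof and should be dropped.
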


$\left( A\right) $\textit{: \ The following statements are equivalent:}

$\ i)$ \textit{\ }$\left( M,\ h,\ P\right) $\textit{\ \ is an almost product
Riemannian manifold.}

$ii)$\textit{\ \ }$\left( M,\ h,\ \ \widehat{P}\right) $\textit{\ \ is an
almost product Riemannian manifold.}

$iii)$\textit{\ \ }$\left( M,\ h,\ G\right) $\textit{\ \ is an almost
golden-Riemannian manifold.}

$iv)$\textit{\ \ }$\left( M,\ h,\ \ \widehat{G}\right) $\textit{\ \ is an
almost golden-Riemannian manifold.}

$\left( B\right) $\textit{: \ The following statements are equivalent}

$\ i)$\textit{\ \ }$\left( M,\ h,\ P\right) $\textit{\ \ is an almost
para-Hermitian manifold.}

$\ ii)$\textit{\ \ }$\left( M,\ h,\ \ \widehat{P}\right) $\textit{\ \ is an
almost para-Hermitian manifold.}

$iii)$\textit{\ \ }$\left( M,\ h,\ G\right) $\textit{\ \ is an almost
golden-Hermitian manifold.}

$iv)$\textit{\ \ }$\left( M,\ h,\ \ \widehat{G}\right) $\textit{\ \ is an
almost golden-Hermitian manifold.}

\begin{definition}
(2.2): \ An almost product manifold \ $\left( M,\ P\right) $ \ and an almost
golden manifold \ $\left( M,\ G\right) $ \ are said to be twins if \ $P$ \
and \ $G$ \ are twins (on the same manifold \ $M$).
\end{definition}

\begin{remark}
(2.1): \ It is obvious that \ $\left( M,\ P\right) $ \ and \ $\left( M,\
G\right) $ \ are twins \ if and only if \ $\left( M,\ \ \widehat{P}\right) $
\ and \ $\left( M,\ \widehat{G}\right) $ \ are twins. \ \ \ \ \ \ \ \ \ \ \
\ \ \ \ \ \ \ \ \ \ \ \ \ \ \ \ \ \ \ \ \ \ \ \ \ \ \ \ \ \ $\blacksquare $
\end{remark}

For an almost product (or golden) manifold \ \textit{\ }$\left( M,\ \varphi
\right) ,$ \ $\varphi $ \ is said to be integrable if its Nijenhuis tensor
field \ $\mathcal{N}_{\varphi }$ \ vanishes, $\left( \left[ \mathbf{3,\ 9}%
\right] \ \right) $. That is, \ $\forall \ X,\ Y\in \Gamma \left( TM\right) $%
\begin{equation*}
\mathcal{N}_{\varphi }\left( X,\ Y\right) =\varphi ^{2}\left[ X,\ Y\right] +%
\left[ \varphi X,\ \varphi Y\right] -\varphi \left[ \varphi X,\ Y\right]
-\varphi \left[ X,\ \varphi Y\right] =0.
\end{equation*}

For an almost product (or golden) manifold \ \textit{\ }$\left( M,\ \varphi
\right) $ \ with integrable \ $\varphi $ \ we drop the adjective " \textit{%
almost} " and then simply call it \textit{product (}or\textit{\ golden)
manifold.}

\begin{lemma}
(2.3): \ $\left[ \mathbf{3}\right] ,$ For a twin pair \ $\left\{ P,\
G\right\} $ \ on a manifold \ $M$ \ with any linear connection \ $\widetilde{%
\nabla }$ one has \ 
\begin{equation*}
5\mathcal{N}_{P}=4\mathcal{N}_{G}\ \ \ \ \ \ \ \ \ \ \ \ \ \ \text{and \ \ \
\ \ }\sqrt{5}\widetilde{\nabla }P=2\widetilde{\nabla }G.
\end{equation*}
\end{lemma}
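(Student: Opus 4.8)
The plan is to work directly from the explicit affine relation between the twin structures, namely $G=G_P=\tfrac12(I+\sqrt5\,P)$ (equivalently $\sqrt5\,P=2G-I$), which holds pointwise as a relation of $(1,1)$-tensor fields, and to exploit how both a linear connection and the Nijenhuis operator behave under substitutions of the form $\varphi\mapsto aI+b\varphi$. I treat the two asserted identities separately.

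For the connection identity I would argue as follows. Any linear connection $\widetilde\nabla$ induces a covariant derivative on $(1,1)$-tensor fields by $(\widetilde\nabla_X\varphi)(Y)=\widetilde\nabla_X(\varphi Y)-\varphi(\widetilde\nabla_X Y)$; this operation is $\mathbb{R}$-linear in $\varphi$ and kills the identity tensor, since $(\widetilde\nabla_X I)(Y)=\widetilde\nabla_X Y-\widetilde\nabla_X Y=0$. Substituting $G=\tfrac12 I+\tfrac{\sqrt5}{2}P$ therefore gives $\widetilde\nabla G=\tfrac{\sqrt5}{2}\widetilde\nabla P$, that is $2\widetilde\nabla G=\sqrt5\,\widetilde\nabla P$. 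This is essentially a one-line computation with no difficulty.

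For the Nijenhuis identity I would first record two elementary properties of $\mathcal N_\varphi(X,Y)=\varphi^2[X,Y]+[\varphi X,\varphi Y]-\varphi[\varphi X,Y]-\varphi[X,\varphi Y]$ valid for an arbitrary $(1,1)$-tensor $\varphi$: (a) translation invariance, $\mathcal N_{\varphi+cI}=\mathcal N_\varphi$ for $c\in\mathbb{R}$, and (b) degree-two homogeneity, $\mathcal N_{b\varphi}=b^2\mathcal N_\varphi$ for $b\in\mathbb{R}$. Property (b) is immediate by factoring $b^2$ out of every term. Property (a) follows by expanding $\mathcal N_{\varphi+cI}$, using $(\varphi+cI)^2=\varphi^2+2c\varphi+c^2I$ and $[\,\cdot+cX,\ \cdot+cY\,]$, and checking that all contributions linear and quadratic in $c$ cancel. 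Combining (a) and (b) with $G=\tfrac12 I+\tfrac{\sqrt5}{2}P$ yields $\mathcal N_G=\mathcal N_{\frac{\sqrt5}{2}P}=\tfrac54\,\mathcal N_P$, i.e. $4\mathcal N_G=5\mathcal N_P$. Equivalently, one may bypass the two lemmas and expand $\mathcal N_G$ outright after substituting $G=\tfrac12 I+\tfrac{\sqrt5}{2}P$, simplifying $G^2=\tfrac32 I+\tfrac{\sqrt5}{2}P$ via $P^2=I$, and collecting terms: after cancellation precisely $\tfrac54\big([X,Y]+[PX,PY]-P[PX,Y]-P[X,PY]\big)=\tfrac54\,\mathcal N_P(X,Y)$ remains, because the coefficients of $P[X,Y]$, $[PX,Y]$ and $[X,PY]$ all vanish.

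There is no genuine obstacle; the only point requiring care is the bookkeeping in the term-collection step of the Nijenhuis expansion — one must be sure the $c$-linear and $c$-quadratic pieces truly cancel in (a), and that $P^2=I$ is applied consistently when reducing $G^2$. The mirror identities obtained by interchanging the roles of $P$ and $G$, using $P=P_G=\tfrac1{\sqrt5}(2G-I)$, are then immediate, either by rearranging the identities just proved or by repeating the same argument verbatim.
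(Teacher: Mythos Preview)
Your argument is correct. The paper does not supply its own proof of Lemma~(2.3); it simply records the statement with a citation to~[3] and moves on, so there is no in-paper proof to compare against. Your approach---covariant-differentiating the affine relation $G=\tfrac12 I+\tfrac{\sqrt5}{2}P$ for the connection identity, and invoking the translation-invariance $\mathcal N_{\varphi+cI}=\mathcal N_\varphi$ together with the homogeneity $\mathcal N_{b\varphi}=b^2\mathcal N_\varphi$ for the Nijenhuis identity---is exactly the standard direct computation one would expect, and the bookkeeping you outline checks out.
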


This lemma gives immediately:

\begin{corollary}
(2.1): \ \ Let $\ \left\{ P,\ G\right\} $ \ be a twin pair\ on a manifold \ $%
M,$ \ then we have:
\end{corollary}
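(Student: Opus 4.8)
The plan is to deduce everything directly from Lemma (2.3) together with the elementary observation that multiplying a tensor field by a nonzero real constant does not affect whether it vanishes. Concretely, Lemma (2.3) gives the pointwise tensorial identities $5\,\mathcal{N}_{P}=4\,\mathcal{N}_{G}$ and $\sqrt{5}\,\widetilde{\nabla}P=2\,\widetilde{\nabla}G$, in which both sides of each equation are genuine tensor fields of the same type. Since $4,5,2,\sqrt{5}$ are all nonzero, the first identity forces $\mathcal{N}_{P}=0\iff\mathcal{N}_{G}=0$ and the second forces $\widetilde{\nabla}P=0\iff\widetilde{\nabla}G=0$. Hence $P$ is integrable exactly when $G$ is, i.e. $(M,P)$ is a product manifold if and only if $(M,G)$ is a golden manifold; and, for any fixed linear connection $\widetilde{\nabla}$, the structure $P$ is $\widetilde{\nabla}$-parallel exactly when $G$ is.

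To bring in the conjugate structures I would invoke Remark (2.1) (equivalently item $(vii)$ preceding Definition (2.1/A)): $\{\widehat{P},\widehat{G}\}$ is again a twin pair, so Lemma (2.3) applied to \emph{it} yields $5\,\mathcal{N}_{\widehat{P}}=4\,\mathcal{N}_{\widehat{G}}$ and $\sqrt{5}\,\widetilde{\nabla}\widehat{P}=2\,\widetilde{\nabla}\widehat{G}$, whence $\mathcal{N}_{\widehat{P}}=0\iff\mathcal{N}_{\widehat{G}}=0$ and $\widetilde{\nabla}\widehat{P}=0\iff\widetilde{\nabla}\widehat{G}=0$. It then remains only to bridge the hatted and unhatted statements. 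For the Nijenhuis tensor this is the identity $\mathcal{N}_{-\varphi}=\mathcal{N}_{\varphi}$, obtained by expanding $\mathcal{N}_{-P}(X,Y)$ and using bilinearity of the Lie bracket (so $\mathcal{N}_{\widehat{P}}=\mathcal{N}_{P}$); and more generally $\mathcal{N}_{\varphi+bI}=\mathcal{N}_{\varphi}$ for any constant $b$, which covers $\mathcal{N}_{\widehat{G}}=\mathcal{N}_{I-G}=\mathcal{N}_{G}$ if one prefers the direct route. For the connection, $\widetilde{\nabla}\widehat{P}=\widetilde{\nabla}(-P)=-\widetilde{\nabla}P$ and $\widetilde{\nabla}\widehat{G}=\widetilde{\nabla}(I-G)=-\widetilde{\nabla}G$, using $\widetilde{\nabla}I=0$. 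Chaining these equivalences together shows that integrability of any one of $P,\widehat{P},G,\widehat{G}$ is equivalent to that of all the others, and likewise for $\widetilde{\nabla}$-parallelism.

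I do not expect any genuine obstacle: the statement is a formal corollary, and the only computations needed — the cancellations establishing $\mathcal{N}_{-\varphi}=\mathcal{N}_{\varphi}$ (or $\mathcal{N}_{\varphi+bI}=\mathcal{N}_{\varphi}$) and the linearity of $\widetilde{\nabla}$ over constant coefficients — are routine. The single point worth making explicit in the write-up is that the equalities in Lemma (2.3) are equalities of tensor fields, not merely of values for fixed arguments at a point, so that dividing through by the nonzero scalar coefficients is legitimate globally; after that, everything is immediate.
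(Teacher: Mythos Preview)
Your proposal is correct and follows essentially the same route as the paper, which simply records that Corollary~(2.1) ``gives immediately'' from Lemma~(2.3); you have merely spelled out the implicit steps (the nonzero-scalar cancellation and the bridge $\mathcal{N}_{\widehat{P}}=\mathcal{N}_{P}$, $\mathcal{N}_{\widehat{G}}=\mathcal{N}_{G}$). Note that Corollary~(2.1) concerns only integrability---your parallelism argument is extra and is precisely the content of Corollary~(2.2), so you have effectively proved both at once.
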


$P$\textit{\ \ is integrable if and only if \ }\ $\widehat{P}$\textit{\ \ is
integrable if and only if }\ $\widehat{G}$\textit{\ \ is integrable if and
only if \ \ }$G$\textit{\ \ is integrable.}

\begin{lemma}
(2.4): \ $\left( \left[ \mathbf{9}\right] ,\ Pg:150-151\right) $ \ For an
almost product manifold \textit{\ }$\left( M,\ P\right) ,$
\end{lemma}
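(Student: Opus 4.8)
The assertion is the classical characterization of integrability of an almost product structure recorded in $\left[ \mathbf{9}\right] $: for an almost product manifold $\left( M,\ P\right) $, the structure $P$ is integrable (i.e. $\mathcal{N}_{P}\equiv 0$) if and only if both eigendistributions $\mathcal{E}_{\left( 1\right) }^{P}$ and $\mathcal{E}_{\left( -1\right) }^{P}$ are involutive; equivalently, about each point there are product coordinates whose coordinate slices are integral manifolds of these distributions, and $M$ admits a torsion-free linear connection $\widetilde{\nabla }$ with $\widetilde{\nabla }P=0$. The only inputs are $P^{2}=I$, the splitting $TM=\mathcal{E}_{\left( 1\right) }^{P}\oplus \mathcal{E}_{\left( -1\right) }^{P}$, tensoriality of $\mathcal{N}_{P}$, and the Frobenius theorem.

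The computational core is the evaluation of $\mathcal{N}_{P}$ on pairs of eigenvector fields. I would first record the complementary projectors $\ell =\frac{1}{2}\left( I+P\right) $ onto $\mathcal{E}_{\left( 1\right) }^{P}$ and $m=\frac{1}{2}\left( I-P\right) $ onto $\mathcal{E}_{\left( -1\right) }^{P}$, with $\ell +m=I$, $\ell ^{2}=\ell $, $m^{2}=m$, $\ell m=m\ell =0$, $P=\ell -m$. Since $\mathcal{N}_{P}$ is $C^{\infty }\left( M\right) $-bilinear and alternating, it suffices to evaluate it when each of $X,\ Y$ is a section of one eigendistribution. Substituting $P^{2}=I$ and $PX=\pm X$, $PY=\pm Y$ into $\mathcal{N}_{P}\left( X,\ Y\right) =P^{2}\left[ X,\ Y\right] +\left[ PX,\ PY\right] -P\left[ PX,\ Y\right] -P\left[ X,\ PY\right] $ gives, by a one-line cancellation in each case,
\[ \mathcal{N}_{P}\left( X,\ Y\right) =2\left( I-P\right) \left[ X,\ Y\right] =4\,m\left[ X,\ Y\right] \quad \text{for }X,\ Y\in \Gamma \left( \mathcal{E}_{\left( 1\right) }^{P}\right) , \]
\[ \mathcal{N}_{P}\left( X,\ Y\right) =2\left( I+P\right) \left[ X,\ Y\right] =4\,\ell \left[ X,\ Y\right] \quad \text{for }X,\ Y\in \Gamma \left( \mathcal{E}_{\left( -1\right) }^{P}\right) , \]
\[ \mathcal{N}_{P}\left( X,\ Y\right) =0\quad \text{for }X\in \Gamma \left( \mathcal{E}_{\left( 1\right) }^{P}\right) ,\ Y\in \Gamma \left( \mathcal{E}_{\left( -1\right) }^{P}\right) . \]

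From the first identity, $\mathcal{N}_{P}$ vanishes on all pairs from $\mathcal{E}_{\left( 1\right) }^{P}$ exactly when $m\left[ X,\ Y\right] =0$, i.e. $\left[ X,\ Y\right] \in \Gamma \left( \mathcal{E}_{\left( 1\right) }^{P}\right) $; symmetrically, the second identity says $\mathcal{E}_{\left( -1\right) }^{P}$ is closed under bracket; the mixed pairs give no condition. Hence $\mathcal{N}_{P}\equiv 0$ is equivalent to both eigendistributions being involutive, and by Frobenius this is the same as their integrability and the existence of the adapted product charts. For the connection clause: on each adapted chart take the coordinate connection (all Christoffel symbols zero) — it is torsion-free and has $P$ parallel, since $P$ has constant coefficients in the coordinate frame — and glue such connections by a partition of unity $\left\{ \rho _{\alpha }\right\} $; the result is still torsion-free (a convex combination of torsion-free connections is torsion-free) and still has $\widetilde{\nabla }P=\sum _{\alpha }\rho _{\alpha }\widetilde{\nabla }^{\alpha }P=0$. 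The converse of this clause is immediate from the general identity $\mathcal{N}_{P}\left( X,\ Y\right) =\left( \widetilde{\nabla }_{PX}P\right) Y-\left( \widetilde{\nabla }_{PY}P\right) X-P\left( \widetilde{\nabla }_{X}P\right) Y+P\left( \widetilde{\nabla }_{Y}P\right) X$, valid for any symmetric linear connection $\widetilde{\nabla }$.

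Everything reduces to substituting $P^{2}=I$ and collecting terms. The one point that deserves care — and the only real obstacle — is the reduction step: one must genuinely invoke tensoriality of $\mathcal{N}_{P}$ to pass from arbitrary vector fields to eigendistribution-valued ones, and when local eigenvector fields are extended in order to compute brackets one should note that those brackets still lie in $TM=\mathcal{E}_{\left( 1\right) }^{P}\oplus \mathcal{E}_{\left( -1\right) }^{P}$, so that applying $\ell $ or $m$ is legitimate. Beyond this bookkeeping there is nothing hard.
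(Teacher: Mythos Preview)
Your proposal is mathematically sound for what it proves, but you were working from an incomplete statement: in the paper the body of Lemma~(2.4) sits outside the \texttt{lemma} environment and consists of three separate claims, namely (i) for any almost product structure $P$ there always exists \emph{some} linear connection $\bar{\nabla}$ with $\bar{\nabla}P=0$ (no torsion-freeness assumed); (ii) for any symmetric linear connection $\check{\nabla}$ one has the identity
\[
\mathcal{N}_{P}(X,Y)=(\check{\nabla}_{PX}P)Y-(\check{\nabla}_{PY}P)X-P\bigl((\check{\nabla}_{X}P)Y\bigr)+P\bigl((\check{\nabla}_{Y}P)X\bigr),
\]
so $\check{\nabla}P=0$ forces integrability; and (iii) conversely, if $P$ is integrable then a symmetric $\overset{s}{\nabla}$ with $\overset{s}{\nabla}P=0$ exists. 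The paper gives no proof at all --- it simply cites \cite{9}, pp.~150--151.

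Your argument correctly handles (ii) (you quote exactly this identity at the end) and (iii) (your partition-of-unity gluing of flat coordinate connections on adapted charts is the standard construction and is fine). The Frobenius-type equivalence $\mathcal{N}_{P}=0 \Leftrightarrow$ both eigendistributions involutive, which you make the centerpiece, is correct and useful, but it is auxiliary to the lemma as actually stated rather than its content. What is genuinely missing is part~(i): the unconditional existence of a linear connection --- possibly with torsion --- making $P$ parallel, without any integrability hypothesis. This needs a different construction (for instance, start from an arbitrary linear connection $\nabla^{0}$ and set $\bar{\nabla}_{X}Y=\ell\,\nabla^{0}_{X}(\ell Y)+m\,\nabla^{0}_{X}(mY)$ with your projectors $\ell,m$; one checks directly that $\bar{\nabla}P=0$). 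Your adapted-chart argument cannot supply~(i) because those charts are only available after integrability is assumed.
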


$\ \ i)$ \ There always exist a linear connection \ $\bar{\nabla}\ \ $on \ $%
M $ \ with \ \ $\bar{\nabla}P=0.$

({\small Note that despite that\ \ }$\bar{\nabla}P=0$, {\small the} {\small %
almost product structure }$\ {\small P}${\small \ \ may not to be integrable
unless }$\bar{\nabla}${\small \ \ is symmetric.})

$\ ii)$ \ For any symmetric linear connection $\ \check{\nabla}$ \ on \ $M$
\ 
\begin{equation*}
\mathcal{N}_{P}\left( X,\ Y\right) =\left( \check{\nabla}_{PX}P\right)
Y-\left( \check{\nabla}_{PY}P\right) X-P\left( \left( \check{\nabla}%
_{X}P\right) Y\right) +P\left( \left( \check{\nabla}_{Y}P\right) X\right)
\end{equation*}%
and therefore,\ If \ $\check{\nabla}P=0$ \ then \ $P$ \ is integrable.

$iii)$ \ If \ $P$ \ is integrable then there always exist a symmetric linear
connection \ $\overset{s}{\nabla }$ \ on \ $M$ \ with \ $\overset{s}{\nabla }%
P=0.$

$\blacksquare $

From Corollary $\left( 2.1\right) $ \ and Lemma $\left( 2.4\right) $ \ one
gets:

\begin{corollary}
(2.2): \ Let \ $\left\{ P,\ G\right\} $ \ be a twin pair of almost product
and almost golden structures on a smooth manifold \ $\left( M,\ h\right) $\
\ with a nondegenerate metric \ $h.$ \ Then, for the Levi-Civita connection
\ $\nabla $ \ on \ $\left( M,\ h\right) $\ \ one has:
\end{corollary}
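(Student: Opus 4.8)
The plan is to read off Corollary (2.2) as a direct consequence of Lemma (2.3), Lemma (2.4), and Corollary (2.1), with the Levi-Civita connection $\nabla$ of $h$ playing the role of the arbitrary linear connection $\widetilde{\nabla}$. Since the statement above is truncated, I take the intended conclusion to be: for a twin pair $\left\{ P,\ G\right\}$ on $\left(M,\ h\right)$, the conditions $\nabla P=0$, $\nabla \widehat{P}=0$, $\nabla G=0$, $\nabla \widehat{G}=0$ are pairwise equivalent, and any one of them forces $P$, $\widehat{P}$, $G$, $\widehat{G}$ to be integrable.

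First I would invoke Lemma (2.3) with $\widetilde{\nabla}=\nabla$; this is legitimate because that lemma is stated for \emph{any} linear connection. It gives $\sqrt{5}\,\nabla P=2\,\nabla G$, so $\nabla P=0$ holds if and only if $\nabla G=0$. Next, using $\widehat{P}=-P$ and $\widehat{G}=I-G$ together with $\nabla I=0$ and the $\mathbb{R}$-linearity of $\nabla$ on $(1,1)$-tensor fields, one obtains $\nabla \widehat{P}=-\nabla P$ and $\nabla \widehat{G}=-\nabla G$; hence $\nabla \widehat{P}=0\iff\nabla P=0$ and $\nabla \widehat{G}=0\iff\nabla G=0$. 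Chaining these together gives the asserted four-way equivalence. For the integrability half, I would use that the Levi-Civita connection $\nabla$ is symmetric and apply Lemma (2.4)(ii) with $\check{\nabla}=\nabla$: if $\nabla P=0$ then $\mathcal{N}_P=0$, i.e. $P$ is integrable, whereupon Corollary (2.1) transfers integrability to $\widehat{P}$, $G$, and $\widehat{G}$ as well.

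I do not expect a genuine obstacle here: the whole content is already contained in the cited results. The only points needing a word of care are that Lemma (2.3) applies verbatim to $\nabla$ since it is stated for an arbitrary linear connection, and that $\widehat{P},\widehat{G}$ differ from $P,G$ only by the parallel identity tensor and a real constant, so $\nabla$ passes through these affine combinations with just a sign change; after that, both the equivalences and the integrability consequence are immediate bookkeeping.
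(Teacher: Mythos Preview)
Your proposal is correct and matches the paper's own approach: the paper simply says ``From Corollary $(2.1)$ and Lemma $(2.4)$ one gets'' Corollary $(2.2)$, which is exactly the chain you spell out (with Lemma $(2.3)$ implicitly underlying Corollary $(2.1)$ and supplying the relation $\sqrt{5}\,\nabla P=2\,\nabla G$). If anything, you are slightly more explicit than the paper in isolating Lemma $(2.3)$ for part $(A)$ and Lemma $(2.4)(ii)$ plus Corollary $(2.1)$ for part $(B)$.
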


$A)$ \ The following are equivalent: \ 

$\ i)\ \ \nabla P=0.\ \ \ \ \ \ $

$ii)\ \ \nabla \ \widehat{G}=0.\ \ \ \ \ \ \ $

$iii)\ \ \nabla G=0.\ \ \ \ \ \ \ $

$iv)\ \ \nabla \ \widehat{P}=0.\ \ $

$B)$\ \ If \ $\nabla P=0$ \ \ then \ \ $P$, \ \ $\widehat{P}$, \ \ $\widehat{%
G}$ \ and \ $G$ \ \ are all integrable.\ \ \ \ \ \ \ \ \ \ \ \ \ \ \ \ \ 

\begin{remark}
(2.2): \ Note that
\end{remark}

$i)$ \ \ the above Corollary is true regardless of whether \ $h$ \ is pure
or hyperbolic or neither with respect to \ $P$ \ (and therefore with respect
to\ \ $\widehat{P},$ \ $G$ \ and \ \ $\widehat{G}$).

$ii)$ \ \ Integrability of \ $\varphi \left( =P,\ G\right) $ \ does not
imply that \ $\varphi $ \ is paralell\ (with respect to the metric ({\small %
Levi-Civita}) connection).\ \ \ \ \ \ \ \ \ \ \ \ \ \ \ \ \ \ \ \ \ \ \ \ \
\ \ \ \ \ \ \ \ \ \ \ \ \ \ \ \ \ \ \ \ \ \ $\blacksquare $

Let \ \ $\left( M,\ h,\ \varphi \left( =P,\ G\right) \right) $ \ be an
almost product (or golden) manifold with a metric \ $h$ \ which is pure or
hyperbolic with respect to \ $\varphi .$ \ Then due to the above lemma \ $%
(2.4)/\left( iii\right) ,$ \ an integrable $\varphi $ is always parallel
with respect to some symmetric connection $\ \overset{s}{\nabla }$ \ anyway.
However $\overset{s}{\nabla }h=0$ \ need not be true, that is, $\overset{s}{%
\nabla }$\ \ need not be the Levi-Civita connection. \ The question here is
that what extra condition should be imposed so that integrability of \ $%
\varphi ,$ \ together with the imposed condition,\ guarantees that \ $%
\varphi $ \ is paralell under the Levi-Civita connection? Answer to this
question will differ depending on whether the metric \ $h$ \ is pure or
hyperbolic with respect to \ $\varphi .$

From here on, unless otherwise stated, the connections involved will be the
Levi-Civita ones and denoted by \ $\nabla $.

$\mathbf{I}:$ \ \textbf{The hyperbolic case:} Even though this case\ is well
known for \ $\varphi =P$, \ $\left( see\ \ \left[ \mathbf{2,\ 11}\right]
\right) $, \ we will give an outline to some extend.

Let \ \ $\left( M,\ h,\ \varphi \left( =P,\ G\right) \right) $ \ be an
almost para-Hermitian manifold with its Levi-Civita connection $\ \nabla .$
Set \ 
\begin{equation*}
\Omega _{P}\left( X,\ Y\right) =\Omega \left( X,\ Y\right) =h\left( PX,\
Y\right) ;\ \ \ \ \ \ \ \ \ \ \ \ \ \ \ \forall X,\ Y\in \Gamma \left(
TM\right) .
\end{equation*}%
$\Omega _{P}$\ \ is a ( $P-$associated ) 2-form on \ $M$ \ and it is called
\ "\textit{fundamental 2-form}" or \ "\textit{para-Kaehler form}". The
exterior differential \ $d\Omega $ \ is a 3-form on \ $M$ \ given by, $\
\left( \left[ \mathbf{6}\right] \right) ,$ 
\begin{equation*}
\begin{tabular}{ll}
$d\Omega \left( X,\ Y,\ Z\right) =$ & $\nabla _{X}\left( \Omega \left( Y,\
Z\right) \right) -\nabla _{Y}\left( \Omega \left( X,\ Z\right) \right)
+\nabla _{Z}\left( \Omega \left( X,\ Y\right) \right) $ \\ 
& $-\Omega \left( \left[ X,\ Y\right] ,\ \ Z\right) -\Omega \left( \left[
Y,\ Z\right] ,\ \ X\right) +\Omega \left( \left[ X,\ Z\right] ,\ \ Y\right) $%
\end{tabular}%
\end{equation*}

which can also be expressed as 
\begin{equation}
d\Omega \left( X,\ Y,\ Z\right) =\left( \nabla _{X}\Omega \right) \left( Y,\
Z\right) -\left( \nabla _{Y}\Omega \right) \left( X,\ Z\right) +\left(
\nabla _{Z}\Omega \right) \left( X,\ Y\right) .  \tag{$\left( 2.1\right) $}
\end{equation}

\begin{definition}
(2.3):
\end{definition}

$A:$ \ $\left( \left[ \mathbf{11}\right] \right) $

$i)$ \ \ An almost para-Hermitian manifold \ $\left( M,\ h,\ P\right) $ \ \
is called \textit{almost para-Kaehler} \ if\ its para-Kaehler form \ $\Omega
_{P}$ \ is closed, $i.e$. $d\Omega _{P}=0.$ \ 

$ii)$ \ An almost para-Kaehler manifold \ $\left( M,\ h,\ P\right) $ \ with
integrable $\ P$ $\ $is called \textit{para-Kaehler }$.$ \ 

$B:$

$i)$ \ \ An almost golden-Hermitian manifold $\ \left( M,\ h,\ G\right) $ \
is called \textit{almost golden-Kaehler }if\textit{\ }the para-Kaehler form
\ $\Omega _{\mathcal{R}}$\ \ is closed, $i.e.$\ \ $d\Omega _{\mathcal{R}}=0,$
\ where $\ \mathcal{R=}P_{G}$ \ is the \ $G-$associated product structure
and $\ \Omega _{\mathcal{R}}$ $\ $is $\ $the $\ \mathcal{R}$-associated
2-form.

$ii)$ \ An almost golden-Kaehler manifold \ $\left( M,\ h,\ G\right) $ \
with integrable\ $G$ \ is called \textit{golden-Kaehler.}

\begin{proposition}
(2.3): \ Let \ $\left\{ P,\ G\right\} $ \ be twin structures on \ $\left(
M,h\right) $ \ \ with a hyperbolic metric \ $h$ \ with respect to \ $P$ \ (
and therefore with respect to \ $G$ ). \ Then the following statements are
equivalent:
\end{proposition}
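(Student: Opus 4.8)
The plan is to peel off the "soft" part of the equivalences — the twin‑correspondence bookkeeping — and isolate the single analytic statement at the core. First, by Corollary $(2.2)(A)$, together with Lemma $(2.3)$ (which gives $\sqrt5\,\nabla P = 2\,\nabla G$ for the Levi–Civita connection), the conditions $\nabla P = 0$, $\nabla\widehat P = 0$, $\nabla G = 0$, $\nabla\widehat G = 0$ are mutually equivalent, so every "parallel" statement in the list collapses to $\nabla P = 0$. Next, since $\{P,G\}$ is a twin pair, the $G$‑associated product structure is $\mathcal R = P_{G} = P$, whence $\Omega_{\mathcal R} = \Omega_{P}$ and $d\Omega_{\mathcal R} = d\Omega_{P}$; thus "$(M,h,G)$ is almost golden‑Kaehler" means exactly "$(M,h,P)$ is almost para‑Kaehler". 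Likewise $\{\widehat P,\widehat G\}$ is a twin pair (item $(vii)$), so $P_{\widehat G} = \widehat P$ and $\Omega_{\widehat P}(X,Y) = h(\widehat P X, Y) = -h(PX,Y) = -\Omega_{P}(X,Y)$, giving $d\Omega_{\widehat P} = -d\Omega_{P}$; hence the "almost (para‑/golden‑)Kaehler" conditions attached to $P$, $\widehat P$, $G$, $\widehat G$ all coincide. Combining this with Corollary $(2.1)$ (integrability of $P$, $\widehat P$, $G$, $\widehat G$ are equivalent), the whole proposition is reduced to the single assertion: for an almost para‑Hermitian manifold $(M,h,P)$, one has $\nabla P = 0$ if and only if $P$ is integrable and $d\Omega_{P} = 0$ (i.e. $(M,h,P)$ is para‑Kaehler). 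If the listed conditions involve only closedness of the fundamental $2$‑forms, this first paragraph already finishes the proof; the remaining two paragraphs treat the extra content coming from the parallelism/integrability clauses.

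For the "only if" direction this is immediate: if $\nabla P = 0$ then $(\nabla_X\Omega_P)(Y,Z) = h\big((\nabla_X P)Y,\ Z\big) = 0$, so $d\Omega_P = 0$ by formula $(2.1)$, while integrability of $P$ follows from Lemma $(2.4)/(ii)$ applied with the symmetric connection $\check\nabla = \nabla$, since $\nabla P = 0$ forces every term of $\mathcal N_P$ to vanish.

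For the "if" direction — the analytic heart — I would establish the pointwise identity, valid for the Levi–Civita connection on any almost para‑Hermitian manifold,
\[
2\,h\big((\nabla_X P)Y,\ Z\big)
= d\Omega_{P}(X, Y, Z) - d\Omega_{P}(X, PY, PZ) + h\big(\mathcal N_{P}(Y, Z),\ PX\big),
\]
by expanding the right‑hand side with the Koszul formula for $\nabla$, the expression $(2.1)$ for $d\Omega_P$, the definition of $\mathcal N_P$, and the hyperbolic compatibility $h(PX,Y) = -h(X,PY)$ together with $P^{2} = I$. Under the hypotheses $d\Omega_P = 0$ and $\mathcal N_P = 0$ the right‑hand side vanishes for all $Z$, and nondegeneracy of $h$ then gives $\nabla_X P = 0$. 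The main obstacle is pinning down this identity with the correct signs and coefficients: because here $P$ is $h$‑skew‑adjoint and satisfies $P^{2} = +I$ (rather than the Hermitian $J^{*} = -J$, $J^{2} = -I$), a number of signs flip relative to the classical Kaehler computation, so the bookkeeping must be redone rather than quoted. Once the identity is in hand the proposition follows at once, and the reduction in the first paragraph is precisely the kind of "express the golden results through the product ones" translation announced in item $7)$ of the Introduction.
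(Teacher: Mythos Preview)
Your caveat hits the mark: the four items listed under Proposition~(2.3) are only the closedness conditions $d\Omega_P=0$, $d\Omega_{\mathcal R}=0$, $d\Omega_{\widehat P}=0$, $d\Omega_{\widehat{\mathcal R}}=0$ --- no parallelism or integrability clauses appear. The paper's proof is one line: $\mathcal R=P_G=P$ since $\{P,G\}$ are twins, so $\Omega_{\mathcal R}=\Omega_P$; the conjugate case and the observation $\Omega_{\widehat P}=-\Omega_P$ are left implicit. This is exactly the middle portion of your first paragraph, so that part is correct and coincides with (indeed spells out more of) the paper's argument.

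Everything you wrote about $\nabla P=0$, integrability, and the pointwise identity for $2h\big((\nabla_XP)Y,Z\big)$ is extraneous here; that material is the content of Proposition~(2.4) and Lemma~(2.5), one step further in the paper. As an aside, the identity you propose differs in signs and coefficients from the paper's Lemma~(2.5), which reads
\[
2h\big((\nabla_XP)Y,Z\big)+3\,d\Omega(X,Y,Z)+3\,d\Omega(X,PY,PZ)+h\big(\mathcal N_P(Y,Z),PX\big)=0.
\]
Either form would suffice for the application to Proposition~(2.4), but your stated version does not match the cited source, so if you carry this over you should redo that computation rather than trust the displayed formula.
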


\ $i)$ \ \ \textit{The manifold }$\ \left( M,\ h,\ P\right) $\textit{\ \ is
almost para-Kaehler, that is, \ }$d\Omega _{P}=0.$

$\ ii)$\textit{\ \ \ The manifold \ }$\left( M,\ h,\ G\right) $\textit{\ \
is an almost golden-Kaehler, that is, }$d\Omega _{\mathcal{R}}=0.$

$iii)$\textit{\ \ \ The manifold \ }$\left( M,\ h,\ \widehat{P}\right) $%
\textit{\ \ is an almost para-Kaehler, that is, \ }$d\Omega _{\widehat{P}}=0$

$iv)$\textit{\ \ \ The manifold \ }$\left( M,\ h,\ \widehat{G}\right) $%
\textit{\ \ is an almost golden-Kaehler, that is,} \ $d\Omega _{\widehat{%
\mathcal{R}}}=0.$ \ 

\begin{proof}
The result follows from the fact that \ $\mathcal{R=}P_{G}=P$ \ since \ $P$ $%
\ $and $\ G$ \ are twins.
\end{proof}

\begin{lemma}
(2.5): $\left( \left[ \mathbf{5}\right] \right) $ \ Let \ \ $\left( M,\ h,\
P\right) $ \ be an almost para-Hermitian manifold with its Levi-Civita
connection $\ \nabla $ \ and para-Kaehler form \ $\Omega .$ \ Then the
following relation holds:$\ \ \ \ \forall X,\ Y,\ Z\in \Gamma \left(
TM\right) $ 
\begin{equation*}
2h\left( \left( \nabla _{X}P\right) Y,\ Z\right) +3d\Omega \left( X,\ Y,\
Z\right) +3d\Omega \left( X,\ PY,\ PZ\right) +h\left( \mathcal{N}_{P}\left(
Y,\ Z\right) ,\ PX\right) =0.
\end{equation*}
\end{lemma}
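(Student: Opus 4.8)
The plan is to derive the stated identity by direct computation, starting from the formula $(2.1)$ for $d\Omega$ and expanding everything in terms of $\nabla P$, then using the purity/hyperbolic compatibility of $h$ and the algebraic identity $P^2=I$ to collapse terms. First I would record the elementary consequence of $\nabla h=0$ (Levi-Civita) together with $h(PX,Y)=h(X,\widehat P Y)=-h(X,PY)$ (the hyperbolic compatibility): since $\Omega(Y,Z)=h(PY,Z)$, differentiating gives $(\nabla_X\Omega)(Y,Z)=h((\nabla_X P)Y,Z)$, and moreover $h((\nabla_X P)Y,Z)=-h(Y,(\nabla_X P)Z)$, i.e. the tensor $(X,Y,Z)\mapsto h((\nabla_X P)Y,Z)$ is skew in $Y,Z$. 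This antisymmetry is the workhorse of the whole computation and I would establish it up front.

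Next I would write out the three terms of $3\,d\Omega(X,Y,Z)+3\,d\Omega(X,PY,PZ)$ using $(2.1)$. Each term is of the form $(\nabla_U\Omega)(V,W)=h((\nabla_U P)V,W)$. In the second $d\Omega$ the arguments $PY,PZ$ produce, via the product rule $\nabla_U(P\cdot)=(\nabla_U P)(\cdot)+P\nabla_U(\cdot)$ applied inside $\Omega$, terms involving $(\nabla_U P)PY$, $(\nabla_U P)PZ$ and also $P(\nabla_U P)$ composed appropriately; here I would use $P^2=I$, which upon differentiating yields $(\nabla_U P)P+P(\nabla_U P)=0$, to convert $(\nabla_U P)PY$ into $-P(\nabla_U P)Y$ and so on. The upshot is that every contribution in the combination $3\,d\Omega(X,Y,Z)+3\,d\Omega(X,PY,PZ)$ can be rewritten as an $h$-pairing of some $(\nabla_? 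P)?$ against $X$, $PX$, $Y$, $PY$, $Z$, or $PZ$, with coefficients $\pm3$.

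Separately I would expand $h(\mathcal N_P(Y,Z),PX)$ using the Levi-Civita expression for $\mathcal N_P$ from Lemma $(2.4)/(ii)$ — that is, $\mathcal N_P(Y,Z)=(\nabla_{PY}P)Z-(\nabla_{PZ}P)Y-P((\nabla_Y P)Z)+P((\nabla_Z P)Y)$ — and pair it against $PX$, again using $h(P\,\cdot,\cdot)=-h(\cdot,P\,\cdot)$ and $P^2=I$ to move the outer $P$'s around. Then I would assemble all pieces: the four explicit $h((\nabla P)\cdot,\cdot)$ terms coming from the two $d\Omega$'s, the four from $h(\mathcal N_P(Y,Z),PX)$, and the target term $2h((\nabla_X P)Y,Z)$, and check that after using the skew-symmetry in the last two slots (to reorder pairings like $h((\nabla_{PY}P)Z,PX)$ into a canonical form) everything cancels. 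The main obstacle I anticipate is purely bookkeeping: keeping track of the signs and of which vector sits in which slot after the repeated applications of $(\nabla_U P)P=-P(\nabla_U P)$ and of the hyperbolic skew-symmetry, since there are on the order of a dozen terms and a single sign error breaks the cancellation. A clean way to control this is to introduce the shorthand $T(X;Y,Z):=h((\nabla_X P)Y,Z)$, note $T$ is skew in $Y,Z$, and reduce the whole identity to a linear identity among the six symbols $T(X;Y,Z)$, $T(X;PY,PZ)$, $T(PY;Z,PX)$, $T(PZ;Y,PX)$, $T(Y;Z,X)$, $T(Z;Y,X)$ and their reorderings, which I would then verify termwise.
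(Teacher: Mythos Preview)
The paper does not supply its own proof of Lemma~(2.5); it simply cites reference~[5] and then uses the identity as a black box in the proof of Proposition~(2.4). So there is nothing in the paper to compare your argument against.

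That said, your strategy is the standard one and is sound. The three ingredients you isolate are exactly what is needed: (a) $(\nabla_X\Omega)(Y,Z)=h((\nabla_XP)Y,Z)$, which follows from $\nabla h=0$; (b) the skew-symmetry $h((\nabla_XP)Y,Z)=-h(Y,(\nabla_XP)Z)$, which is the infinitesimal form of the hyperbolic compatibility $h(P\cdot,\cdot)=-h(\cdot,P\cdot)$; and (c) the relation $(\nabla_UP)P=-P(\nabla_UP)$ coming from $P^2=I$. With these in hand, both $3\,d\Omega$-terms expanded via $(2.1)$ and the Nijenhuis term expanded via Lemma~(2.4)/(ii) reduce to linear combinations of your symbols $T(\,\cdot\,;\,\cdot\,,\,\cdot\,)$, and the identity becomes a purely combinatorial check.

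One small clarification: when you write that you will ``apply the product rule inside $\Omega$'' for $d\Omega(X,PY,PZ)$, note that no product rule is needed there. Since $(\nabla_U\Omega)$ is already a tensor, you simply evaluate it at $PY,PZ$, obtaining e.g.\ $(\nabla_X\Omega)(PY,PZ)=h((\nabla_XP)(PY),PZ)$, and then use $(\nabla_XP)P=-P(\nabla_XP)$ together with the hyperbolic compatibility to simplify. The only place the product rule genuinely enters is in deriving (a) and (b) at the outset. With that adjustment your bookkeeping plan via the shorthand $T(X;Y,Z)$ is exactly the right way to control the dozen or so terms, and the cancellation goes through.
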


\begin{proposition}
(2.4): \ Let \ \ $\left( M,\ h,\ \varphi \left( =P,\ G\right) \right) $ \ be
an almost para-Hermitian or an almost golden-Hermitian manifold.
\end{proposition}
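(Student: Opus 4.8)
The plan is to prove the stated equivalences for the product structure $P$ first and then transport everything to the golden structure $G$ through the twin-structure dictionary built earlier, so that a single argument handles both alternatives of the hypothesis. Throughout, the key input is the identity of Lemma $(2.5)$ together with the fact that on an almost para-Hermitian (resp. almost golden-Hermitian) manifold the metric $h$ is nondegenerate, by Lemma $(2.2/A)$ (resp. Lemma $(2.2/B)$).

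For the product case, assume $(M,h,P)$ is almost para-Hermitian, with $\Omega_P(X,Y)=h(PX,Y)$. First I would dispatch the easy direction: if $\nabla P=0$, then since $\nabla h=0$ and $\nabla_W(PX)=(\nabla_WP)X+P\nabla_WX=P\nabla_WX$, a two-line computation gives $\nabla\Omega_P=0$, hence $d\Omega_P=0$ by formula $(2.1)$; and applying Lemma $(2.4)/(ii)$ to the symmetric Levi-Civita connection shows $\mathcal N_P=0$, so $P$ is integrable, i.e. $(M,h,P)$ is para-Kaehler. For the converse I would substitute $d\Omega_P=0$ and $\mathcal N_P=0$ into Lemma $(2.5)$: every term except the first drops out, leaving $2h((\nabla_XP)Y,Z)=0$ for all $X,Y,Z\in\Gamma(TM)$, and nondegeneracy of $h$ then forces $(\nabla_XP)Y=0$, i.e. $\nabla P=0$.

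Next I would pass to the golden case. Given $(M,h,G)$ almost golden-Hermitian, put $\mathcal R=P_G$, so that $\{\mathcal R,G\}$ is a twin pair, $(M,h,\mathcal R)$ is almost para-Hermitian by Proposition $(2.2)/(B)$, and the $\mathcal R$-associated $2$-form is literally $\Omega_{\mathcal R}$. Then Lemma $(2.3)$ gives $\sqrt5\,\nabla\mathcal R=2\,\nabla G$, whence $\nabla G=0\iff\nabla\mathcal R=0$; Corollary $(2.1)$ gives $G$ integrable $\iff\mathcal R$ integrable; and $d\Omega_{\mathcal R}=0$ is the same statement on both sides. Chaining these with the product case settled above yields the golden analogue — $\nabla G=0$ iff $(M,h,G)$ is golden-Kaehler — and completes the proof.

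The genuinely load-bearing point, and the only place that could be called a difficulty, is the use of nondegeneracy of $h$ to go from $h((\nabla_XP)Y,Z)=0$ to $\nabla P=0$: this is exactly what the hyperbolic (para-Hermitian / golden-Hermitian) hypothesis provides, and it is why the pure case has to be treated separately, with the extra condition $\varphi(\ast)$, in Proposition $(2.5)$. Everything else — the $\nabla\Omega_P=0$ computation and the twin-pair translations via Lemmas $(2.3)$–$(2.4)$ and Corollary $(2.1)$ — is routine bookkeeping.
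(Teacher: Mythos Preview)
Your proposal is correct and follows essentially the same route as the paper: both prove Part $A$ via Lemma $(2.5)$ and equation $(2.1)$, and then derive Part $B$ by passing to the twin product structure $\mathcal R=P_G$. The only cosmetic difference is that for the implication $\nabla P=0\Rightarrow \mathcal N_P=0$ you invoke Lemma $(2.4)/(ii)$ directly (the Levi--Civita connection is symmetric), whereas the paper deduces it from Lemma $(2.5)$ after having already established $d\Omega_P=0$; both are one-line observations. One small remark on your closing paragraph: the nondegeneracy of $h$ is not what distinguishes the hyperbolic from the pure case (a Riemannian metric is certainly nondegenerate); rather, Lemma $(2.5)$ itself is stated and valid only under the hyperbolic compatibility $h(PX,Y)=-h(X,PY)$, so in the pure setting that identity is simply unavailable and a different mechanism (the condition $\varphi(\ast)$) is needed.
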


$A:$ \ \textit{Then the following are equivalent:}

\textit{\ }$i)$\textit{\ \ \ }$P$\textit{\ }$\ $\textit{is parallel with
respect to the Levi-Civita connection }$\ \nabla $\textit{, that is }$\
\nabla P=0.$\textit{\ \ }

$ii)$\textit{\ \ \ }$M$ \ is \textit{\ para-Kaehler (that is, \ }$\mathcal{N}%
_{P}=0$ $\ $and $\ d\Omega _{P}=0).$

$B:$\textit{\ \ Then the following are equivalent:}

\textit{\ }$i)$\textit{\ \ \ }$G$\textit{\ }$\ $\textit{is parallel with
respect to the Levi-Civita connection }$\ \nabla $\textit{, that is, }$\
\nabla G=0.$\textit{\ \ }

$ii)$\textit{\ \ }$M$ \ is \textit{\ golden-Kaehler (that is \ }$\mathcal{N}%
_{G}=0$ $\ $and $\ d\Omega _{\mathcal{R}}=0),$\textit{\ \ where }$\ \mathcal{%
R}=P_{G}$\textit{\ \ is the \ }$G$-\textit{associated product structure.}

\begin{proof}
: $\ A$ \ $\left( \text{ \ }\left[ \mathbf{11}\right] \right) :$
\end{proof}

$\left( i\right) \Rightarrow \left( ii\right) :$ \ Note that 
\begin{equation}
\nabla \left( \Omega \left( X,\ Y\right) \right) =\left( \nabla \Omega
\right) \left( X,\ Y\right) +\Omega \left( \nabla X,\ Y\right) +\Omega
\left( X,\ \nabla Y\right)  \tag{$\left( 2.2\right) $}
\end{equation}%
On the other hand, since \ $\nabla P=0$ \ and \ $\nabla h=0,$ \ we have

\begin{equation}
\begin{tabular}{ll}
$\nabla \left( \Omega \left( X,\ Y\right) \right) =\nabla \left( h\left(
PX,\ Y\right) \right) $ & $=\left( \nabla h\right) \left( PX,\ Y\right)
+h\left( \nabla \left( PX\right) ,\ Y\right) +h\left( PX,\ \nabla Y\right) $
\\ 
& $=h\left( P\left( \nabla X\right) ,\ Y\right) +h\left( PX,\ \nabla
Y\right) $ \\ 
& $=\Omega \left( \nabla X,\ Y\right) +\Omega \left( X,\ \nabla Y\right) $%
\end{tabular}
\tag{$\left( 2.3\right) $}
\end{equation}

But then the equalities $\ \left( 2.2\right) $ \ and \ $\left( 2.3\right) $
\ give us that \ $\nabla \Omega =0.\ $\ So, from \ $\left( 2.1\right) ,$ \
we get \ $d\Omega _{P}=0.$ \ The equality \ $\mathcal{N}_{P}=0$ \ follows
from Lemma $\left( 2.5\right) .$

$\left( ii\right) \Rightarrow \left( i\right) :$ \ This follows directly
from Lemma $\left( 2.5\right) .$

$B:$ \ Since $\ \left\{ \mathcal{R},\ G\right\} $ \ is a twin pair on \ $M,$
\ the reqired equivalence follows from part \ $\left( A\right) $. \ 

\begin{remark}
(2.3): \ Let \ $J$ \ be a \ $\left( 1,\ 1\right) $-tensor field with \ \ $%
J^{2}=-I$ \ on a Riemannian manifold \ \ $\left( M,\ g\right) ,$ \ where \ $%
g $ \ is hyperbolic with respect to \ $J\ \ \ $and \ $\widehat{J}=-J$ \ is
the conjugate \ of $\ J.$ \ Then \ $J$ \ and \ $\left( M,\ g,\ J\right) $ \
are called almost complex structure and almost Hermitian manifold
respectively. \ In this case it is well known that Proposition $\ (2.4)/A$ \
is also valid when \ $P$ \ is replaced by \ $J.$ \ \ \ \ \ \ \ \ \ \ \ \ \ \
\ \ \ \ \ \ \ \ \ \ \ \ \ \ \ \ \ \ \ \ $\blacksquare $
\end{remark}

$\mathbf{II}:$ \ \textbf{The pure case}:

Let \ \ $\left( M,\ h,\ \varphi \left( =P,\ G\right) \right) $ \ be an
almost product (or an almost golden) Riemannian manifold (so that \ $h$ \ is
pure) with its Levi-Civita connection $\ \nabla .$ The so called "Tachibana
operator"%
\begin{equation*}
\phi _{\varphi }:\mathcal{\Im }_{2}^{0}\left( M\right) \rightarrow \mathcal{%
\Im }_{3}^{0}\left( M\right)
\end{equation*}%
from the set of all \ $\left( 0,2\right) -$tensor fields into the set of all
\ $\left( 0,3\right) -$tensor fields over \ $M$ \ is given by , $\left( see%
\text{ \ }\left[ \mathbf{8,\ 12}\right] \right) :\forall \ u\in \mathcal{\Im 
}_{2}^{0}\left( M\right) $ \ \ and \ \ $\forall \ X,\ Y,\ Z\in \Gamma \left(
TM\right) $

\begin{equation*}
\begin{tabular}{ll}
$\left( \phi _{\varphi }u\right) \left( X,\ Y,\ Z\right) =$ & $\left(
\varphi X\right) \left( u\left( Y,\ Z\right) \right) -X\left( u\left(
\varphi Y,\ Z\right) \right) $ \\ 
& $+u\left( \left( \mathcal{L}_{_{Y}}\varphi \right) X,\ Z\right) +u\left(
Y,\ \left( \mathcal{L}_{_{Z}}\varphi \right) X\right) $%
\end{tabular}%
\end{equation*}%
where \ $\mathcal{L}\varphi $ \ is the Lie derivative of \ $\varphi .$

In particular, for the pure metric \ $h$ \ with respect to \ $\varphi $, the
above equality takes the form \ $\left( see\text{ \ \ }\left[ \mathbf{8,\ 12}%
\right] \right) :\ \ \ \forall \ X,\ Y,\ Z\in \Gamma \left( TM\right) $%
\begin{equation*}
\left( \phi _{\varphi }h\right) \left( X,\ Y,\ Z\right) =-h\left( \left(
\nabla _{_{X}}\varphi \right) Y,\ Z\right) +h\left( \left( \nabla
_{_{Y}}\varphi \right) X,\ Z\right) +h\left( \left( \nabla _{_{Z}}\varphi
\right) X,\ Y\right) .
\end{equation*}

Now let us define another operator%
\begin{equation*}
\mathbf{\Psi }_{\varphi }:\mathcal{\Im }_{2}^{0}\left( M\right) \rightarrow 
\mathcal{\Im }_{3}^{0}\left( M\right)
\end{equation*}%
by, $\ \forall \ X,\ Y,\ Z\in \Gamma \left( TM\right) $ \ 
\begin{equation*}
\left( \mathbf{\Psi }_{\varphi }u\right) \left( X,\ Y,\ Z\right) =\left(
\phi _{\varphi }u\right) \left( X,\ Y,\ Z\right) +\left( \phi _{\varphi
}u\right) \left( Z,\ Y,\ X\right)
\end{equation*}

\begin{lemma}
(2.6): $\left( \text{\ }\left[ \mathbf{8,\ 12}\right] \right) $ \ Let \ \ $%
\left( M,\ h,\ \varphi \left( =P,\ G\right) \right) $ \ be an almost product
(or an almost golden) Riemannian manifold with its Levi-Civita connection $\
\nabla .$ Then
\end{lemma}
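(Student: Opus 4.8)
The plan is to reduce everything to one structural identity and then do bookkeeping. First I would exploit that the Levi-Civita connection is metric together with the purity of $h$: differentiating $h(\varphi X,\ Y)=h(X,\ \varphi Y)$ along an arbitrary field $W$ and using $\nabla h=0$, the two terms in which $\varphi$ is paired with a $\nabla$-derivative cancel against each other by purity, leaving
\[
h\big((\nabla_W\varphi)X,\ Y\big)=h\big((\nabla_W\varphi)Y,\ X\big).
\]
In other words, the $(0,3)$-tensor $A(W,X,Y):=h((\nabla_W\varphi)X,\ Y)$ is symmetric in its last two slots. This single fact carries the whole argument.

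Next, using the coordinate-free expression for $\phi_\varphi h$ recorded just above the lemma, I would write $(\phi_\varphi h)(X,Y,Z)=-A(X,Y,Z)+A(Y,X,Z)+A(Z,X,Y)$ and then form $\mathbf{\Psi}_\varphi h$ by adding its $X\leftrightarrow Z$ transpose. Repeatedly applying the last-two-slot symmetry of $A$, the ``diagonal'' terms cancel in pairs, and what survives is
\[
(\mathbf{\Psi}_\varphi h)(X,Y,Z)=2\,h\big((\nabla_Y\varphi)X,\ Z\big).
\]
Since $h$ is nondegenerate, the right-hand side vanishes for all $X,Y,Z$ exactly when $\nabla\varphi=0$; that is the asserted equivalence $\mathbf{\Psi}_\varphi h=0\Leftrightarrow\nabla\varphi=0$. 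For the companion statement about the Tachibana operator itself: $\nabla\varphi=0\Rightarrow\phi_\varphi h=0$ is immediate from the displayed formula, and for the converse I would assume $A(X,Y,Z)=A(Y,X,Z)+A(Z,X,Y)$, swap $X$ and $Y$, rewrite via the last-two-slot symmetry, and substitute back; the two relations combine to force $A(Z,X,Y)=0$ identically, hence $\nabla\varphi=0$.

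Finally, I would not recompute anything for the golden case. For a twin pair $\{P,G\}$, Lemma $(2.3)$ gives $\sqrt5\,\nabla P=2\,\nabla G$ for the Levi-Civita connection, Proposition $(2.1)$ says $h$ is pure with respect to $P$ iff it is pure with respect to $G$, and since $G=\tfrac12(I+\sqrt5\,P)$ enters all the defining formulas linearly, $\phi_G h$ and $\mathbf{\Psi}_G h$ are scalar multiples of $\phi_P h$ and $\mathbf{\Psi}_P h$. Hence the golden case follows from the product case. The only delicate point is the permutation bookkeeping in the middle step; once the symmetry of $A$ in its last two arguments is established it is purely mechanical, so that symmetry observation is the real content of the proof.
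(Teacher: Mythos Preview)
Your argument is correct. The paper does not actually prove Lemma~(2.6); it is stated with a citation to [8,~12] and used as a black box. So there is no proof in the paper to compare against, and what you have written is a complete, self-contained verification that the paper itself does not supply.

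For the record, each step checks out. Differentiating the purity condition and using $\nabla h=0$ does yield the symmetry $A(W,X,Y)=A(W,Y,X)$ exactly as you say. With that symmetry, the six-term expansion of $(\mathbf{\Psi}_\varphi h)(X,Y,Z)$ collapses to $2A(Y,X,Z)$, which is part~(i). The equivalences in part~(ii) are then straightforward: nondegeneracy of $h$ gives $(a^\circ)\Leftrightarrow(c^\circ)$ from part~(i), $(c^\circ)\Rightarrow(b^\circ)$ is immediate, and your swap-and-substitute argument for $(b^\circ)\Rightarrow(c^\circ)$ is valid (adding the relation to its $X\leftrightarrow Y$ transpose and invoking the last-two-slot symmetry forces $2A(Z,X,Y)=0$). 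The reduction of the golden case to the product case via the twin pair is also fine, though strictly speaking unnecessary: your argument for~(i) and~(ii) never used $\varphi^2=I$, only the purity of $h$ with respect to~$\varphi$ and $\nabla h=0$, so it applies verbatim to $\varphi=G$.
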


\ $i)$ \ $\forall \ X,\ Y,\ Z\in \Gamma \left( TM\right) $%
\begin{equation*}
\left( \mathbf{\Psi }_{\varphi }h\right) \left( X,\ Y,\ Z\right) =2h\left(
\left( \nabla _{_{Y}}\varphi \right) X,\ Z\right)
\end{equation*}

\ $ii)$ \ The following are equivalent:

$a^{\circ })$ \ $\left( \mathbf{\Psi }_{\varphi }h\right) =0$

$b^{\circ })$ \ $\left( \phi _{\varphi }h\right) =0$

$c^{\circ })$ \ $\nabla \varphi =0$

$\ \ \ \ \ \ \ \ \ \ \ \ \ \ \ \ \ \ \ \ \ \ \ \ \ \ \ \ \ \ \ \ \ \ \ \ \ \
\ \ \ \ \ \ \ \ \ \ \ \ \ \ \ \ \ \ \ \ \ \ \ \ \ \ \ \ \ \ \ \ \ \ \ \ \ \
\ \ \ \ \ \ \ \ \ \ \ \ \ \ \ \ \ \ \ \ \ \ \ \ \ \ \ \ \ \ \ \ \ \ \ \ \ \
\ \ \ \ \ \ \ \ \ \ \ \ \ \ \ \ \ \ \ \ \ \ \ \ \ \blacksquare $

Now set a condition on \ $\mathbf{\Psi }_{\varphi }:$%
\begin{equation}
\left( \mathbf{\Psi }_{\varphi }h\right) \left( X,\text{ }Y,\ Z\right)
=\left( \mathbf{\Psi }_{\varphi }h\right) \left( Y,\ X,\ Z\right) +\left( 
\mathbf{\Psi }_{\varphi }h\right) \left( \varphi Y,\ \varphi X,\ Z\right) ,\
\ \ \forall \ X,\ Y,\ Z\in \Gamma \left( TM\right) 
\tag{$\varphi
\left(
\ast \right) $}
\end{equation}

\begin{proposition}
(2.5): \ Let \ \ $\left( M,\ h,\ \varphi \left( =P,\ G\right) \right) $ \ be
an almost product (or an almost golden) Riemannian manifold Then
\end{proposition}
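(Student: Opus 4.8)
The plan is to establish the equivalence of the two natural conditions: $\nabla\varphi=0$ on the one hand, and ``$\varphi$ is integrable and satisfies condition $\varphi(\ast)$'' on the other. One direction is free: if $\nabla\varphi=0$ then $\mathbf{\Psi}_{\varphi}h=0$ by Lemma $(2.6)$, so both sides of $\varphi(\ast)$ vanish identically, and $\varphi$ is integrable because the Levi-Civita connection is symmetric (Lemma $(2.4)/(ii)$, or directly from the Nijenhuis identity recalled below). So the content is the forward implication, which I would handle uniformly for $\varphi=P$ and $\varphi=G$ by exploiting two facts valid in both cases: that $\varphi\,\widehat{\varphi}=\widehat{\varphi}\,\varphi=-I$ (immediate from $P^{2}=I$, resp. $G^{2}=G+I$), and that, writing $A(X,Y):=(\nabla_{X}\varphi)Y$, one has $A(X,\varphi Y)=\widehat{\varphi}\big(A(X,Y)\big)$ (apply $\nabla_{X}$ to $P^{2}=I$, resp. to $G^{2}=G+I$, and use the Leibniz rule for compositions of $(1,1)$-tensors).

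The second ingredient I would assemble is the general Nijenhuis formula for the torsion-free connection $\nabla$, namely $\mathcal{N}_{\varphi}(X,Y)=A(\varphi X,Y)-A(\varphi Y,X)+\varphi\big(A(Y,X)\big)-\varphi\big(A(X,Y)\big)$; for $\varphi=P$ this is Lemma $(2.4)/(ii)$, and for $\varphi=G$ it is proved the same way by writing $[U,V]=\nabla_{U}V-\nabla_{V}U$ and watching the $\varphi^{2}$-terms and the second-order $\nabla$-terms cancel in pairs (the argument never uses the specific quadratic relation). The third ingredient is the translation of condition $\varphi(\ast)$: by Lemma $(2.6)/(i)$, $(\mathbf{\Psi}_{\varphi}h)(X,Y,Z)=2h\big(A(Y,X),Z\big)$, so substituting the three arguments of $\varphi(\ast)$ into this, using $A(\varphi X,\varphi Y)=\widehat{\varphi}\big(A(\varphi X,Y)\big)$ and the nondegeneracy of $h$, shows that $\varphi(\ast)$ is equivalent to the pointwise identity
\[
A(Y,X)=A(X,Y)+\widehat{\varphi}\big(A(\varphi X,Y)\big),\qquad\forall\,X,Y.
\]

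The remaining work is then purely algebraic. Interchanging $X$ and $Y$ in this identity and adding the two versions annihilates the symmetric combination $A(X,Y)+A(Y,X)$ and leaves $\widehat{\varphi}\big(A(\varphi X,Y)+A(\varphi Y,X)\big)=0$; since $\widehat{\varphi}$ is an isomorphism this yields the skew relation $A(\varphi X,Y)=-A(\varphi Y,X)$. Substituting this into the Nijenhuis formula and imposing integrability, $\mathcal{N}_{\varphi}=0$, the two first-order terms merge into $2A(\varphi X,Y)$, giving $2A(\varphi X,Y)=\varphi\big(A(X,Y)-A(Y,X)\big)$; but the displayed identity says $A(X,Y)-A(Y,X)=-\widehat{\varphi}\big(A(\varphi X,Y)\big)$, so the right-hand side equals $-\varphi\,\widehat{\varphi}\big(A(\varphi X,Y)\big)=A(\varphi X,Y)$ using $\varphi\,\widehat{\varphi}=-I$. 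Hence $A(\varphi X,Y)=0$ for all $X,Y$, and because $\varphi:TM\to TM$ is an isomorphism this forces $A\equiv 0$, i.e.\ $\nabla\varphi=0$.

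The step I expect to be the main obstacle is making sure this single line of algebra actually closes in both cases at once: it works precisely because symmetrizing $\varphi(\ast)$ produces the skew-symmetry $A(\varphi X,Y)=-A(\varphi Y,X)$ — exactly what is needed for the two first-order terms of $\mathcal{N}_{\varphi}=0$ to combine — and because $\varphi\,\widehat{\varphi}=-I$ (with the minus sign) holds for the golden structure just as for the product one, so the final relation collapses to $2T=T$ rather than to $2T=-T$ or $2T=3T$. Verifying the two uniform identities in the golden setting, where $\widehat{G}=I-G$ and $G\,\widehat{G}=-I$ replace $\widehat{P}=-P$ and $P\,\widehat{P}=-I$, is the only place calling for care; once that is done the product proof transcribes verbatim, consistent with the paper's stated programme of expressing golden results through product ones.
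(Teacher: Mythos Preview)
Your argument for Part~A is correct and is organized differently from the paper's. The paper writes $h(\mathcal N_P(X,Y),Z)$ out in terms of the three $\Psi_P h$-values, interchanges $X\leftrightarrow Y$, and then substitutes $Y\mapsto PY$ so that the right-hand side becomes exactly $(\Psi_P h)(X,Y,Z)-(\Psi_P h)(PY,PX,Z)-(\Psi_P h)(Y,X,Z)$, which vanishes by $P(\ast)$; with $\mathcal N_P=0$ this leaves $h((\nabla_{PY}P)(PX),Z)=0$. You instead translate $\varphi(\ast)$ once into the pointwise identity $A(Y,X)=A(X,Y)+\widehat\varphi\bigl(A(\varphi X,Y)\bigr)$, symmetrize to extract the skew relation $A(\varphi X,Y)=-A(\varphi Y,X)$, and feed this into the covariant Nijenhuis formula so that $\varphi\widehat\varphi=-I$ collapses $2A(\varphi X,Y)$ to $A(\varphi X,Y)$. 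The content is the same, but your packaging isolates the two structure-independent identities $A(X,\varphi Y)=\widehat\varphi(A(X,Y))$ and $\varphi\widehat\varphi=-I$, which is what lets the golden case go through verbatim.

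That said, there is one discrepancy with the statement as the paper formulates it. In Part~B the condition is \emph{not} $G(\ast)$ but $\mathcal R(\ast)$, where $\mathcal R=P_G$ is the twin product structure; the paper deduces Part~B from Part~A by transporting across the twin pair via Lemma~(2.3) and Corollary~(2.2). Your uniform argument proves instead the (correct, and in fact more intrinsic) variant $\nabla G=0\Leftrightarrow(\mathcal N_G=0$ and $G(\ast))$, which is not literally Proposition~(2.5)/B. To match the paper you still need either the twin-pair step (immediate from your Part~A together with $\sqrt5\,\nabla\mathcal R=2\nabla G$ and $5\mathcal N_{\mathcal R}=4\mathcal N_G$), or a separate verification that, in the presence of $\mathcal N_G=0$, the conditions $G(\ast)$ and $\mathcal R(\ast)$ coincide.
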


$A:$ \ The following are equivalent:

\ $i)$ \ \ $P$ $\ $is parallel with respect to the Levi-Civita connection $\
\nabla $, that is, $\ \nabla P=0.$ \ 

$ii)$ \ $P$ $\ $is integrable,$\ $that is, \ $\mathcal{N}_{P}=0$ \ \ and$\ $%
the condition \ $P\left( \ast \right) $ \ holds.

$B:$ \ The the following are equivalent:

\ $i)$ \ $G$ $\ $is parallel with respect to the Levi-Civita connection $\
\nabla $, that is, $\ \nabla G=0.$ \ 

$ii)$ \ $G$ $\ $is integrable,$\ $that is, \ $\mathcal{N}_{G}=0$ \ \ and \ \
the condition $\ $\ $\mathcal{R}\left( \ast \right) $ \ holds.

Here \ \ $\mathcal{R=}P_{G}$ \ is the twin product structure of \ $G,$ $\ $so%
$\ $that,\ \ $\left\{ G,\ \mathcal{R}\right\} $ \ form a twin pair. \ 

\begin{proof}
: \ 
\end{proof}

$A:$

$\left( i\right) \Rightarrow \left( ii\right) :$ \ Since \ $\nabla P=0$ \ by
the assumption, we have from Lemma\ $\left( 2.4\right) /\left( ii\right) $ \
that $\ \mathcal{N}_{P}=0,$ \ and from Lemma $\left( 2.6\right) $ \ the
condition $P\left( \ast \right) $\ \ follows.

$\left( ii\right) \Rightarrow \left( i\right) :$ \ From Lemma $\left(
2.4\right) /\left( ii\right) $ \ we have: \ $\forall \ X,\ Y,\ Z\in \Gamma
\left( TM\right) $%
\begin{equation*}
\begin{tabular}{ll}
$\mathcal{N}_{P}\left( X,\ Y\right) $ & $=\left( \nabla _{PX}P\right)
Y-\left( \nabla _{PY}P\right) X-P\left( \left( \nabla _{X}P\right) Y\right)
+P\left( \left( \nabla _{Y}P\right) X\right) $ \\ 
& $=\left( \nabla _{PX}P\right) Y-\left( \nabla _{PY}P\right) X+\left(
\nabla _{X}P\right) \left( PY\right) -\left( \nabla _{Y}P\right) \left(
PX\right) $%
\end{tabular}%
\end{equation*}%
So%
\begin{equation*}
\begin{tabular}{ll}
$h\left( \mathcal{N}_{P}\left( X,\ Y\right) ,\ Z\right) $ & $=h\left( \left(
\nabla _{PX}P\right) Y,\ Z\right) -h\left( \left( \nabla _{PY}P\right) X,\
Z\right) $ \\ 
& $+h\left( \left( \nabla _{X}P\right) \left( PY\right) ,\ Z\right) -h\left(
\left( \nabla _{Y}P\right) \left( PX\right) ,\ Z\right) .$%
\end{tabular}%
\end{equation*}%
That is,%
\begin{equation*}
\begin{tabular}{ll}
$h\left( \mathcal{N}_{P}\left( X,\ Y\right) ,\ Z\right) -h\left( \left(
\nabla _{X}P\right) \left( PY\right) ,\ Z\right) $ & $=h\left( \left( \nabla
_{PX}P\right) Y,\ Z\right) $ \\ 
& $-h\left( \left( \nabla _{PY}P\right) X,\ Z\right) -h\left( \left( \nabla
_{Y}P\right) \left( PX\right) ,\ Z\right) .$%
\end{tabular}%
\end{equation*}%
Then using Lemma $\left( 2.6\right) ,$ we get; \ $\forall \ X,\ Y,\ Z\in
\Gamma \left( TM\right) $%
\begin{equation*}
\begin{tabular}{ll}
$2\left\{ h\left( \mathcal{N}_{P}\left( X,\ Y\right) ,\ Z\right) -h\left(
\left( \nabla _{X}P\right) \left( PY\right) ,\ Z\right) \right\} $ & $%
=\left( \mathbf{\Psi }_{P}h\right) \left( Y,\ PX,\ Z\right) $ \\ 
& $-\left( \mathbf{\Psi }_{P}h\right) \left( X,\ PY,\ Z\right) -\left( 
\mathbf{\Psi }_{P}h\right) \left( PX,\ Y,\ Z\right) .$%
\end{tabular}%
\end{equation*}%
Exchanging \ $\ X,$ $\ $with$\ \ \ Y,$ \ this equation reads:%
\begin{equation*}
\begin{tabular}{ll}
$2\left\{ h\left( \mathcal{N}_{P}\left( Y,\ X\right) ,\ Z\right) -h\left(
\left( \nabla _{Y}P\right) \left( PX\right) ,\ Z\right) \right\} $ & $%
=\left( \mathbf{\Psi }_{P}h\right) \left( X,\ PY,\ Z\right) $ \\ 
& $-\left( \mathbf{\Psi }_{P}h\right) \left( Y,\ PX,\ Z\right) -\left( 
\mathbf{\Psi }_{P}h\right) \left( PY,\ X,\ Z\right) .$%
\end{tabular}%
\end{equation*}%
Then putting \ $Y$ \ for \ $PY$ \ in the last equation ( doing this does not
alter the equation since \ $P$ \ is an isomorphism) we get; $\ \ \forall \
X,\ Y,\ Z\in \Gamma \left( TM\right) $

\begin{equation*}
\begin{tabular}{ll}
$2\left\{ h\left( \mathcal{N}_{P}\left( PY,\ X\right) ,\ Z\right) -h\left(
\left( \nabla _{PY}P\right) \left( PX\right) ,\ Z\right) \right\} $ & $%
=\left( \mathbf{\Psi }_{P}h\right) \left( X,\ Y,\ Z\right) $ \\ 
& $-\left( \mathbf{\Psi }_{P}h\right) \left( PY,\ PX,\ Z\right) -\left( 
\mathbf{\Psi }_{P}h\right) \left( Y,\ X,\ Z\right) .$%
\end{tabular}%
\end{equation*}%
But then under the assumptions that \ $\mathcal{N}_{P}=0$ \ and \ the
condition \ $P\left( \ast \right) $\ \ holds, the last equation gives us
that $\ $%
\begin{equation*}
h\left( \left( \nabla _{PY}P\right) \left( PX\right) ,\ Z\right) =0,\ \
\forall \ X,\ Y,\ Z\in \Gamma \left( TM\right)
\end{equation*}%
which means that \ $\nabla P=0.$

$B:$

$\left( i\right) \Rightarrow \left( ii\right) :$ \ By the assumption, \ $%
\nabla G=0$ \ and therefore \ $\nabla \mathcal{R}=0.$ \ So by part \ $\left(
A\right) $ \ above,\ we get \ $\mathcal{N}_{\mathcal{R}}=0,$ \ and therefore
\ $\mathcal{N}_{G}=0$ \ by Lemma $\left( 2.3\right) .$\ Also by part \ $%
\left( A\right) $, \ we get that the condition $\ \mathcal{R}\left( \ast
\right) $ \ holds.

$\left( ii\right) \Rightarrow \left( i\right) :$ \ By the assumption, \ $%
\mathcal{N}_{G}=0$ \ and therefore \ $\mathcal{N}_{\mathcal{R}}=0$ \ and the
condition \ $\mathcal{R}\left( \ast \right) $\ \ holds. So by part \ $\left(
A\right) ,$\ we get \ $\nabla \mathcal{R}=0,$ \ and therefore \ $\nabla G=0$
\ by Corollary $\left( 2.2\right) .$ \ \ $\blacksquare $

For an almost product (or an almost golden ) manifold \ $\left( M,\ h,\
\varphi \right) $ \ with a pure or hyperbolic metric \ $h$ \ with respect to
\ $\varphi ,$ \ and with its Levi-Civita connection $\nabla ,$ \ the
divergence \ $\func{div}\varphi \ \ $of \ $\varphi $ \ is given by, $\left[ 
\mathbf{6}\right] ,$%
\begin{equation*}
\func{div}\varphi =\sum_{i=1}^{m}h_{ii}\left( \nabla _{e_{i}}\varphi \right)
e_{i}.
\end{equation*}%
Here \ $\left\{ e_{_{1}},...,e_{_{m}}\right\} $ \ is a local orthonormal
frame field for \ $\Gamma \left( TM\right) $ \ and \ \ $h_{ii}=h\left(
e_{i},\ e_{i}\right) .$

\begin{definition}
(2.4): \ 
\end{definition}

$\left( A\right) :$ \ An almost product Riemannian manifold \ $\left( M,\
h,\ P\right) $ \ with its Levi-Civita connection $\ \nabla ,$\ is called

\ $i)$ \ \textit{locally product Riemannian manifold} \ if \ $P$ \ is
integrable, $\left[ \mathbf{8}\right] $.

$ii)$ \ \textit{almost decomposable product Riemannian manifold }\ if \ $%
P\left( \ast \right) $ \ holds.

$iii)$ \ \textit{locally decomposable product Riemannian manifold }\ if both
\ $P$ \ is integrable and \ $P\left( \ast \right) $ \ holds (that is, $\ P$
\ is parallel), $\left[ \mathbf{8}\right] $.

In particular, if \ $\left( M,\ h,\ P\right) $ \ is a $\ \mathbf{B}$%
-manifold \ ($resp:$ almost \ $\mathbf{B}$-manifold\ ) holding the condition 
$\ \ P\left( \ast \right) $ \ then it is also called \textit{%
para-holomorphic }$\ \mathbf{B}$\textit{-manifold, }$\left[ \mathbf{12}%
\right] ,$ \ ($resp:$ \textit{almost para-holomorphic\ }$B$\textit{-manifold}%
\ ).\ Note here that by the virtue of Proposition $\left( 2.5\right) ,$ \ if
\ $\left( M,\ h,\ P\right) $ \ is a para-holomorphic$\ \mathbf{B}$-manifold
\ then \ $\nabla P=0$, \ i.e. \ $P$ is parallel.

$\ iv)$ \ \textit{Semi decomposable product Riemannian manifold }if $\ \func{%
div}P=0$

$\left( B\right) :$ \ An almost golden Riemannian manifold \ $\left( M,\ h,\
G\right) $ \ with its Levi-Civita connection $\ \nabla ,$\ is called

\ \ $i)$ \ \textit{locally golden Riemannian manifold} \ if \ $G$ \ is
integrable, $\left[ \mathbf{8}\right] $.

$ii)$ \ \textit{almost decomposable golden Riemannian manifold }\ if \ $%
G\left( \ast \right) $ \ holds.

$iii)$ \ \textit{locally decomposable golden Riemannian manifold }\ if both
\ $G$ \ is integrable and \ $G\left( \ast \right) $ \ holds (that is, $\ G$
\ is parallel), $\left[ \mathbf{8}\right] $.

$iv)$ \ \textit{Semi decomposable golden Riemannian manifold} if $\ \func{div%
}G=0$\ \ \ \ \ \ \ \ \ \ \ \ \ \ \ \ \ \ \ \ \ \ \ \ \ \ \ \ \ $\blacksquare 
$

Define a bilinear map, $\left[ \mathbf{7}\right] ,$ 
\begin{equation*}
S_{\varphi }:\Gamma \left( TM\right) \times \Gamma \left( TM\right)
\rightarrow \Gamma \left( TM\right)
\end{equation*}%
on a manifold \ $\left( M,\ h,\ \varphi \left( =P,\ G\right) \right) $ \
with the Levi-Civita connection $\ \nabla $ \ \ by%
\begin{equation*}
S_{\varphi }\left( X,\ Y\right) =\left( \nabla _{_{X}}\varphi \right)
Y+\varphi \left( \nabla _{_{\varphi X}}\varphi \right) Y\text{ \ \ \ \ \ \ }%
\forall \ X,\ Y\in \Gamma \left( TM\right) .
\end{equation*}

\begin{lemma}
(2.7):
\end{lemma}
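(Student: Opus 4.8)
The plan is to obtain each identity of Lemma~(2.7) by expanding the definition of $S_{\varphi}$ and feeding in three structural inputs: the operator identity $\varphi^{2}=c\,\varphi+I$ (with $c=0$ when $\varphi=P$ and $c=1$ when $\varphi=G$), the fact that $\nabla$ is metric and $h$ is compatible (resp.\ hyperbolically compatible) with $\varphi$, and the twin relation $\sqrt{5}\,\nabla P=2\,\nabla G$ from Lemma~(2.3). First I would record the two auxiliary identities one gets by differentiating the minimal polynomial: from $P^{2}=I$, $(\nabla_{X}P)(PY)=-P\big((\nabla_{X}P)Y\big)$, and from $G^{2}=G+I$, $(\nabla_{X}G)(GY)=(\nabla_{X}G)Y-G\big((\nabla_{X}G)Y\big)$; uniformly, $(\nabla_{X}\varphi)(\varphi Y)+\varphi\big((\nabla_{X}\varphi)Y\big)=c\,(\nabla_{X}\varphi)Y$. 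Second, differentiating $h(\varphi X,Y)=h(X,\varphi Y)$ and using $\nabla h=0$ shows each $\nabla_{W}\varphi$ is $h$-self-adjoint; in the hyperbolic set-up the same computation turns $\varphi$ into $\widehat{\varphi}$ across $h$, which is what will later separate ``parallel'' from ``quasi-Hermitian''.

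With these in hand the computation is mechanical. For the equivariance-type identity, expand $S_{\varphi}(\varphi X,Y)=(\nabla_{\varphi X}\varphi)Y+\varphi(\nabla_{\varphi^{2}X}\varphi)Y$; substitute $\varphi^{2}X=c\,\varphi X+X$ and use $C^{\infty}$-linearity of $\nabla_{\bullet}\varphi$ in the direction to split $\nabla_{\varphi^{2}X}\varphi=c\,\nabla_{\varphi X}\varphi+\nabla_{X}\varphi$, then compare with $\varphi\big(S_{\varphi}(X,Y)\big)=\varphi\big((\nabla_{X}\varphi)Y\big)+(c\,\varphi+I)\big((\nabla_{\varphi X}\varphi)Y\big)$; the two sides match term by term, giving $S_{\varphi}(\varphi X,Y)=\varphi\big(S_{\varphi}(X,Y)\big)$. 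For any $h$-contracted form of the statement, push $\varphi$ across the metric via compatibility and then move it off $(\nabla_{\bullet}\varphi)$ by the self-adjointness recorded above; here the pure and hyperbolic cases visibly diverge because in the hyperbolic case the displaced $\varphi$ comes back as $\widehat{\varphi}=-P$ (resp.\ $I-G$), producing the extra terms. Finally, the golden version of every identity follows from the product version (or conversely) by substituting $G=\tfrac12(I+\sqrt{5}\,P)$ and $\mathcal{R}=P_{G}=\tfrac1{\sqrt5}(2G-I)$ and invoking Lemma~(2.3); this is exactly why the lemma can be stated uniformly for $\varphi(=P,G)$ and why its consequences feed straight into Proposition~(2.6) and Definition~(2.5).

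I expect the only real obstacle to be bookkeeping, not insight. The subtlety is that the differentiation direction in $\nabla_{\varphi X}\varphi$ itself carries a $\varphi$, so one must resist treating $\nabla_{\varphi X}\varphi$ as a plain ``value'' of $\nabla\varphi$: one keeps expanding $\varphi X$, re-collapsing $\varphi^{2}X=c\,\varphi X+X$, and carefully distinguishing this operator action on tangent vectors from $\varphi$ applied twice to an output of $\nabla_{\bullet}\varphi$, where the differentiated minimal-polynomial identity contributes the extra $\varphi(\nabla_{\bullet}\varphi)$ term. A secondary discipline is sign-tracking in the hyperbolic case, where $h(\varphi X,Y)=h(X,\widehat{\varphi}Y)$ means a $\varphi$ moved through $h$ may return as $\varphi$ or as $\widehat{\varphi}$; mishandling this would erase precisely the pure-versus-hyperbolic dichotomy that motivates the statement.
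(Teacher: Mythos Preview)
Your core computational move --- differentiating the minimal polynomial $\varphi^{2}=c\varphi+I$ to get $(\nabla_{X}\varphi)(\varphi Y)+\varphi\big((\nabla_{X}\varphi)Y\big)=c\,(\nabla_{X}\varphi)Y$ --- is exactly what the paper uses to obtain A/(i) and B/(i), and your equivariance calculation $S_{\varphi}(\varphi X,Y)=\varphi\big(S_{\varphi}(X,Y)\big)$ is a clean route to A/(ii). So the backbone of your proposal matches the paper.

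However, roughly half of what you wrote is foreign to Lemma~(2.7). The lemma is a purely connection-theoretic statement about the bilinear operator $S_{\varphi}$: the metric $h$ does not appear anywhere in its statement, there are no ``$h$-contracted forms'', and the pure/hyperbolic dichotomy plays no role. Your second paragraph of inputs (self-adjointness of $\nabla_{W}\varphi$, pushing $\varphi$ across $h$, sign-tracking for $\widehat{\varphi}$) belongs to Proposition~(2.6) and Definition~(2.5), not here. Including it suggests you have conflated this lemma with the subsequent results it feeds.

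You also do not explicitly treat the eigendistribution vanishing, A/(iii) and B/(ii), which is the only part the paper spells out in any detail. For A/(iii) this is immediate from A/(i) by substituting $PX=\pm X$, $PY=\pm Y$. For B/(ii) the paper does not go through the twin relation as you propose; it simply substitutes $GX=\sigma X$, $GY=\sigma Y$ into B/(i) and uses $1-\sigma^{2}+\sigma=0$. Your twin-pair reduction does work (one finds $S_{G}=\tfrac{5}{8}(\sqrt{5}\,I+P)\circ S_{P}$, so $S_{P}=0$ on $\mathcal{E}^{P}_{(\pm 1)}=\mathcal{E}^{G}_{(\sigma)},\mathcal{E}^{G}_{(\bar\sigma)}$ forces $S_{G}=0$ there), but it is considerably more laborious than the paper's one-line eigenvalue check.
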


$\left( A\right) :$ \ For $\varphi =P$ $\ $and $\ \forall \ X,\ Y\in \Gamma
\left( TM\right) $ \ we have

\ $i)$ \ 
\begin{equation*}
S_{P}\left( X,\ Y\right) =\left( \nabla _{_{X}}P\right) Y-\left( \nabla
_{_{PX}}P\right) \left( PY\right) .
\end{equation*}

$ii)$ \ 
\begin{equation*}
P\left( S_{P}\left( X,\ Y\right) \right) =-S_{P}\left( X,\ PY\right)
=S_{P}\left( PX,\ Y\right) .
\end{equation*}

$iii)$

\begin{equation*}
S_{P}\left( X,\ Y\right) =0,\ \ \forall \ X,\ Y\in \Gamma \left( \mathcal{E}%
_{\left( 1\right) }\right) \ \ \text{and \ }S_{P}\left( X,\ Y\right) =0,\
\forall \ X,\ Y\in \Gamma \left( \mathcal{E}_{\left( -1\right) }\right) .
\end{equation*}

$\left( B\right) :$ \ For $\varphi =G$ $\ $and $\ \forall \ X,\ Y\in \Gamma
\left( TM\right) $ \ we have

\ $i)$ 
\begin{equation*}
S_{G}\left( X,\ Y\right) =\left( \nabla _{_{X}}G\right) Y-\left( \nabla
_{_{GX}}G\right) \left( GY\right) +\left( \nabla _{_{GX}}G\right) Y.
\end{equation*}

$ii)$%
\begin{equation*}
S_{G}\left( X,\ Y\right) =0,\text{ \ }\forall \ X,\ Y\in \Gamma \left( 
\mathcal{E}_{\left( \sigma \right) }\right) \ \ \ \text{and}\ \ \
S_{G}\left( X,\ Y\right) =0,\text{ \ }\ \forall \ X,\ Y\in \Gamma \left( 
\mathcal{E}_{\left( \bar{\sigma}\right) }\right) .\text{\ }
\end{equation*}

\begin{proof}
:
\end{proof}

Using the facts that \ \ $P\left( \left( \nabla P\right) X\right) =-\ \left(
\nabla P\right) \left( PX\right) $ \ \ and \ $G\left( \left( \nabla G\right)
Y\right) =-\left( \nabla G\right) \left( GY\right) +\left( \nabla G\right) Y$%
\ \ we get $\ A/\left( i\right) $ $\ \ $and \ \ $B/\left( i\right) .$ \ \
Next, \ \ $A/\left( ii\right) $ $\ \ $and \ \ $A/\left( iii\right) $ \ \ are
easy. \ For \ $B/\left( ii\right) $ \ let $X,\ Y\in \Gamma \left( \mathcal{E}%
_{\left( \sigma \right) }\right) $, then

\begin{equation*}
\begin{tabular}{ll}
$S_{G}\left( X,\ Y\right) $ & $=\left( \nabla _{_{X}}G\right) Y-\left(
\nabla _{_{GX}}G\right) \left( GY\right) +\left( \nabla _{_{GX}}G\right)
Y=\left( \nabla _{_{X}}G\right) Y-\sigma ^{2}\left( \nabla _{_{X}}G\right)
\left( Y\right) +\sigma \left( \nabla _{_{X}}G\right) Y$ \\ 
& $=\left( 1-\sigma ^{2}+\sigma \right) \left( \nabla _{_{X}}G\right) Y=0,$
\ \ since \ \ $\sigma ^{2}=1+\sigma .$%
\end{tabular}%
\end{equation*}%
By the same argument we also get that 
\begin{equation*}
S_{G}\left( X,\ Y\right) =0,\text{ \ }\ \ \ \forall \ X,\ Y\in \Gamma \left( 
\mathcal{E}_{\left( \bar{\sigma}\right) }\right) ,
\end{equation*}%
which completes the proof.

\begin{lemma}
(2.8):$\left( c.f\ \ \left[ \mathbf{7}\right] \right) $ \ On an almost
product Riemannian or an almost para Hermitian \ manifold $\ \ \left( M,\
h,\ P\right) ,$ \ \ the following statements are equivalent:
\end{lemma}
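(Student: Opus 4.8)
The plan is to obtain the stated equivalences by a short cycle of implications resting on little more than the algebraic identities for $S_{P}$ collected in Lemma (2.7); no appeal to the Nijenhuis tensor or to a fundamental $2$-form should be needed, and as a by-product the argument will be uniform in the two cases (pure metric / hyperbolic metric), since the identities of Lemma (2.7) are connection identities and do not see which compatibility $h$ satisfies. The implications in which a global vanishing statement is merely restricted to the diagonal, to a conjugate/twin structure, or to the eigendistributions are immediate, so the actual work lies in the converses.

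For the passage from the diagonal condition $S_{P}(X,X)=0$ (all $X$) to $S_{P}\equiv 0$ I would polarize the $(1,2)$-tensor $S_{P}$ pointwise, obtaining $S_{P}(X,Y)+S_{P}(Y,X)=0$ for all $X,Y$ (note $S_{P}$ need not be symmetric, so this is all polarization yields). By Lemma (2.7)$/A(iii)$, $S_{P}$ already vanishes on any pair drawn from a single eigendistribution, so it suffices to treat $X\in\Gamma(\mathcal{E}_{(1)})$, $Y\in\Gamma(\mathcal{E}_{(-1)})$. Here Lemma (2.7)$/A(ii)$, in the form $P\,S_{P}(X,Y)=S_{P}(PX,Y)$, forces $S_{P}(X,Y)\in\Gamma(\mathcal{E}_{(1)})$ (since $PX=X$) and, applied to the reversed pair, $S_{P}(Y,X)\in\Gamma(\mathcal{E}_{(-1)})$ (since $PY=-Y$); being negatives of one another these vectors lie in $\mathcal{E}_{(1)}\cap\mathcal{E}_{(-1)}=0$, hence both vanish and $S_{P}\equiv 0$. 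Reading the same computation through $S_{P}(X,Y)=(\nabla_{X}P)Y-(\nabla_{PX}P)(PY)$ of Lemma (2.7)$/A(i)$, on such a mixed pair $S_{P}(X,Y)=2(\nabla_{X}P)Y$, which is precisely why a condition phrased as ``$(\nabla_{X}P)Y=0$ whenever $X,Y$ lie in opposite eigendistributions'' belongs in the list as an equivalent formulation.

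If the list also carries the conjugate structure $\widehat{P}$ — or, for a twin pair $\{P,G\}$, the golden structures $G$ and $\widehat{G}$ — I would dispose of those by direct substitution. From $\nabla\widehat{P}=-\nabla P$ and $\nabla_{\widehat{P}X}=\nabla_{-PX}=-\nabla_{PX}$ one reads off $S_{\widehat{P}}=-S_{P}$ in one line. For the golden version, inserting $G=\frac{1}{2}(I+\sqrt{5}\,P)$ and $\nabla G=\frac{\sqrt{5}}{2}\,\nabla P$ into the definition of $S_{G}$ and collecting the resulting terms via $P^{2}=I$ yields $S_{G}$ as a fixed invertible $(1,1)$-tensor — a scalar multiple of $\sqrt{5}\,I+P$, whose eigenvalues $\sqrt{5}\pm 1$ are nonzero — applied to $S_{P}$; hence $S_{G}=0\Leftrightarrow S_{P}=0$, and likewise for the hatted pair. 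I expect the one genuinely delicate point to be the parity step of the previous paragraph, which is the conceptual heart of the lemma: $S_{P}$ must vanish identically as soon as it vanishes on the diagonal, because its $P$-equivariance pins its value to a single eigendistribution on each mixed pair. The remaining bookkeeping — in particular keeping $(\nabla_{X}P)Y$, $(\nabla_{PX}P)Y$ and their $P$-images straight in the golden computation — is mechanical linear algebra in the splitting $TM=\mathcal{E}_{(1)}\oplus\mathcal{E}_{(-1)}$.
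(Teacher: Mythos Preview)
Your argument for $(iii)\Rightarrow(i)$ via polarization together with the eigendistribution analysis through Lemma~(2.7)/$A(ii),(iii)$ is correct and efficient. Note, however, that the paper does not actually supply a proof of Lemma~(2.8): it simply cites reference~[7], so there is no in-text argument to compare yours against.

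Two small remarks on your write-up. First, you should make the equivalence $(ii)\Leftrightarrow(iii)$ explicit rather than folding it into ``immediate'': it is a one-line consequence of $P\,S_{P}(X,X)=-S_{P}(X,PX)$ from Lemma~(2.7)/$A(ii)$ together with the fact that $P$ is an isomorphism, but since it is one of the three stated items it deserves a sentence. Second, your final two paragraphs on $\widehat{P}$, $G$, and $\widehat{G}$ are not part of Lemma~(2.8) at all---the lemma concerns only the three conditions on $S_{P}$---so for a proof of this lemma they should be excised; that material belongs with Proposition~(2.6)/$B$, where the paper handles it by the direct substitutions you describe.
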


\ $i)$ \ $S_{P}\left( X,\ Y\right) =0\ \ \ \ \ \ \ \ \forall \ X,\ Y\in
\Gamma \left( TM\right) .$

\ $ii)$ \ $S_{P}\left( X,\ PX\right) =0\ \ \ \ \ \forall \ X\in \Gamma
\left( TM\right) .$

$iii)$ \ $S_{P}\left( X,\ X\right) =0\ \ \ \ \ \ \ \forall \ X\in \Gamma
\left( TM\right) .$

\begin{proposition}
(2.6): \ \ For an almost product Riemannian manifold \ $\ M_{P}=\left( M,\
h,\ P\right) $ and \ an almost golden Riemannian manifold \ $\ M_{G}=\left(
M,\ h,\ G\right) ,$ \ 
\end{proposition}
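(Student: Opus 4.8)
The plan is to prove, in the pure (Riemannian) case, that vanishing of $S_{\varphi}$ is equivalent to $\nabla\varphi=0$, handling $\varphi=P$ first and then reducing the golden case to it via the twin product structure $\mathcal{R}=P_{G}$.

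\textbf{Part $A$ (the product case).} The implication $\nabla P=0\Rightarrow S_{P}=0$ is immediate from the definition $S_{P}(X,Y)=(\nabla_{X}P)Y+P(\nabla_{PX}P)Y$. For the converse, assume $S_{P}=0$; by Lemma $(2.7)/A/(i)$ this reads $(\nabla_{X}P)Y=(\nabla_{PX}P)(PY)$ for all $X,Y$. I would split $TM=\mathcal{E}_{(1)}\oplus\mathcal{E}_{(-1)}$ and test $S_{P}$ on eigenvectors: if $X\in\Gamma(\mathcal{E}_{(1)})$ and $Y\in\Gamma(\mathcal{E}_{(-1)})$ then $S_{P}(X,Y)=2(\nabla_{X}P)Y$, so $(\nabla_{X}P)Y=0$; symmetrically $(\nabla_{X}P)Y=0$ for $X\in\Gamma(\mathcal{E}_{(-1)})$, $Y\in\Gamma(\mathcal{E}_{(1)})$. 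On pairs from a single eigendistribution $S_{P}$ vanishes identically by Lemma $(2.7)/A/(iii)$, giving no information, so here one must use purity of $h$. Differentiating the compatibility identity $h(PY,Z)=h(Y,PZ)$ and using $\nabla h=0$ shows each $\nabla_{X}P$ is $h$-symmetric, $h((\nabla_{X}P)Y,Z)=h((\nabla_{X}P)Z,Y)$. Also $P^{2}=I$ yields $(\nabla_{X}P)(PY)=-P((\nabla_{X}P)Y)$, so for $X,Y\in\Gamma(\mathcal{E}_{(1)})$ the vector $(\nabla_{X}P)Y$ lies in $\mathcal{E}_{(-1)}$, and for $X,Y\in\Gamma(\mathcal{E}_{(-1)})$ it lies in $\mathcal{E}_{(1)}$. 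Pairing such a vector with $Z$ in the complementary eigendistribution and using $h$-symmetry of $\nabla_{X}P$ together with the mixed vanishing already established, one gets $h((\nabla_{X}P)Y,Z)=h((\nabla_{X}P)Z,Y)=0$; since the eigendistributions are $h$-orthogonal and $h$ is Riemannian, $(\nabla_{X}P)Y=0$. Hence $\nabla P=0$, which by Proposition $(2.5)/A$ and Definition $(2.4)/A$ says exactly that $M_{P}$ is a locally decomposable product Riemannian manifold. (An alternative for the converse: deduce from $S_{P}=0$ both $\mathcal{N}_{P}=0$ and the condition $P(\ast)$ and invoke Proposition $(2.5)$, reducing first to $S_{P}(X,X)=0$ via Lemma $(2.8)$; the eigendistribution argument above is shorter.)

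\textbf{Part $B$ (the golden case).} Rather than redo the computation I would transfer everything to $\mathcal{R}=P_{G}=\frac{1}{\sqrt{5}}(2G-I)$. By Proposition $(2.2)/A$, $M_{G}=(M,h,G)$ is an almost golden Riemannian manifold iff $(M,h,\mathcal{R})$ is an almost product Riemannian manifold. Using $\nabla_{X}\mathcal{R}=\frac{2}{\sqrt{5}}\nabla_{X}G$, the fact that $\mathcal{R}X$ is an affine combination of $X$ and $GX$, and the relations $G^{2}=G+I$, $\sigma^{2}=\sigma+1$, $\bar\sigma^{2}=\bar\sigma+1$, a short computation (which may also be verified on $\mathcal{E}_{(\sigma)}$, $\mathcal{E}_{(\bar\sigma)}$ via Lemma $(2.7)/B$) shows $S_{G}$ and $S_{\mathcal{R}}$ vanish together, $S_{G}=0\Leftrightarrow S_{\mathcal{R}}=0$. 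Then Part $A$ applied to $\mathcal{R}$ gives $S_{\mathcal{R}}=0\Leftrightarrow\nabla\mathcal{R}=0$, and $\nabla\mathcal{R}=0\Leftrightarrow\nabla G=0$ by Corollary $(2.2)/A$ (equivalently Lemma $(2.3)$, since $\sqrt{5}\,\widetilde{\nabla}P=2\,\widetilde{\nabla}G$ under the twin identification). Chaining these, $S_{G}=0\Leftrightarrow\nabla G=0$, i.e. $M_{G}$ is a locally decomposable golden Riemannian manifold; and when $\{P,G\}$ is a twin pair all these conditions coincide with $S_{P}=0$ and $\nabla P=0$ of Part $A$.

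\textbf{Main obstacle.} The crux is the diagonal part of Part $A$: on a pair of vectors from one and the same eigendistribution $S_{P}$ is identically $0$ and so carries no information, so one is forced to bring in the metric geometry — the $h$-symmetry of $\nabla_{X}P$ induced by purity together with $h$-orthogonality of $\mathcal{E}_{(1)}$ and $\mathcal{E}_{(-1)}$ — to conclude $(\nabla_{X}P)Y=0$ there. This is precisely the step that fails for a hyperbolic metric (where one only reaches the wider quasi para-Hermitian class). The only other delicate point is the bookkeeping in the $S_{G}\leftrightarrow S_{\mathcal{R}}$ correspondence, namely tracking the factors of $\sqrt{5}$ and the extra term $(\nabla_{GX}G)Y$ in $S_{G}$ that has no analogue in $S_{P}$; this is purely computational.
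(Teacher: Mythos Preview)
Your proposal is correct and follows the paper's overall strategy: split $TM$ into eigendistributions, read off the mixed vanishing $(\nabla_{X}P)Y=0$ directly from $S_{P}=0$, and then use purity of $h$ to handle the same-eigendistribution pairs. There are two minor technical differences worth flagging. For the diagonal step the paper argues that $(\nabla_{X}P)Y=0$ on mixed pairs forces $\nabla_{X}Y\in\Gamma(\mathcal{E}_{(-1)})$, then uses $\nabla h=0$ and $h$-orthogonality of the eigendistributions to get $\nabla_{X}Z\in\Gamma(\mathcal{E}_{(1)})$ for $X,Z\in\Gamma(\mathcal{E}_{(1)})$, whence $(\nabla_{X}P)Z=0$; you instead use that $\nabla_{X}P$ is $h$-symmetric (from purity) together with $(\nabla_{X}P)(\mathcal{E}_{(1)})\subseteq\mathcal{E}_{(-1)}$ and the already established mixed vanishing. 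Both arguments hinge on exactly the same ingredients and are of equal length. For the golden case the paper simply says to mimic the product argument with $\mathcal{E}_{(\sigma)},\mathcal{E}_{(\bar\sigma)}$ in place of $\mathcal{E}_{(\pm1)}$, and then derives Part~$B$ from the elementary relations $S_{P}=-S_{\widehat{P}}$, $\nabla P=-\nabla\widehat{P}$, $\nabla G=-\tfrac{\sqrt{5}}{2}\nabla\widehat{P}=-\nabla\widehat{G}$; you instead transfer to the twin product structure $\mathcal{R}=P_{G}$ and invoke Part~$A$ once. Your route avoids repeating the eigendistribution argument at the cost of the bookkeeping you mention for $S_{G}\leftrightarrow S_{\mathcal{R}}$ (which indeed works out: on each mixed bigrading the two differ by a nonzero scalar, and both vanish identically on the diagonal bigradings by Lemma~$(2.7)$). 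Neither route is materially shorter; yours is perhaps tidier conceptually, the paper's more self-contained. Note that you did not explicitly address the $(\mathbf{\Psi}_{\varphi}h)=0$ and $(\phi_{\varphi}h)=0$ equivalences or the $\widehat{P},\widehat{G}$ items in Part~$B$, but these are immediate from Lemma~$(2.6)$ and the conjugate identities respectively, as the paper also notes.
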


$\left( A\right) :$ \ on \ \ $M_{P}$

$\ \ i)\ \ \ \nabla P=0$ \ \ if and only if \ \ $S_{P}=0\ \ \ $\ if and only
if \ \ $\left( \mathbf{\Psi }_{P}h\right) =0\ \ $\ if and only if \ \ \ $%
\left( \phi _{P}h\right) =0.$

$ii)\ \ \ \nabla G=0$ \ \ \ if and only if $\ \ S_{G}=0\ \ \ $if and only if
\ \ $\left( \mathbf{\Psi }_{G}h\right) =0\ \ $\ if and only if \ \ \ $\left(
\phi _{G}h\right) =0$. \ \ 

$\left( B\right) :$ \ If \ $M_{P}$ \ and \ \ $M_{G}.$ \ are twin manifolds
then the following are equivalent:

$\ \ i)\ \ \ \nabla P=0$ \ \ on \ \ $M_{P}.$ \ \ 

$\ ii)\ \ S_{P}=0\ \ \ $on\ $\ \ M_{P}.$

$iii)\ \ \ \nabla G=0$ \ \ on \ \ $M_{G}.$

$iv)\ \ \ S_{G}=0\ \ \ $on $\ \ M_{G}.\ $

$\ v)\ \ \ \nabla \widehat{G}=0\ $\ on$\ \ M_{\widehat{G}}.$

$vi)$ $\ \ S_{\widehat{P}}=0\ \ $on$\ \ M_{\widehat{P}}.$

$vii)$ $\ \ \nabla \widehat{P}=0$ \ \ on \ \ $M_{\widehat{P}}.$

\begin{proof}
:
\end{proof}

$\left( A\right) :$ \ 

$\left( i\right) :$ $\ \ $If $\ \nabla P=0\ \ $\ then obviously \ \ $%
S_{P}=0. $

Conversely, assume that $\ \ S_{P}=0.$ \ Then for \ $X\in \Gamma \left( 
\mathcal{E}_{\left( 1\right) }\right) $ $\ \ $and $\ \ Y\in \Gamma \left( 
\mathcal{E}_{\left( -1\right) }\right) $ $\ \ $\ 
\begin{equation*}
S_{P}\left( Y,\ X\right) =\left( \nabla _{_{X}}P\right) Y-\left( \nabla
_{_{PX}}P\right) PY=2\left( \nabla _{_{X}}P\right) Y=0,
\end{equation*}%
which gives 
\begin{equation}
\left( \nabla _{X}P\right) Y=0;\ \ \ \ \ \ \ \forall \ X\in \Gamma \left( 
\mathcal{E}_{\left( 1\right) }\right) \ \ \text{and}\ \ \forall \ Y\in
\Gamma \left( \mathcal{E}_{\left( -1\right) }\right) . 
\tag{$\left(
2.4\right) $}
\end{equation}%
By a similar argument\ we get 
\begin{equation}
\left( \nabla _{Y}P\right) X=0;\ \ \ \ \ \ \ \forall \ X\in \Gamma \left( 
\mathcal{E}_{\left( 1\right) }\right) \ \ \text{and}\ \ \forall \ Y\in
\Gamma \left( \mathcal{E}_{\left( -1\right) }\right) . 
\tag{$\left(
2.5\right) $}
\end{equation}%
From \ $\left( 2.4\right) $ \ we get \ 
\begin{equation*}
\left( \nabla _{X}P\right) Y=\nabla _{X}\left( PY\right) -P\left( \nabla
_{X}Y\right) =-\nabla _{X}Y-P\left( \nabla _{X}Y\right) =0.
\end{equation*}%
So,\ 
\begin{equation}
\nabla _{X}Y\in \Gamma \left( \mathcal{E}_{\left( -1\right) }\right) \ \ \ \
\ \ \ \forall \ X\in \Gamma \left( \mathcal{E}_{\left( 1\right) }\right) \ \ 
\text{and}\ \ \forall \ Y\in \Gamma \left( \mathcal{E}_{\left( -1\right)
}\right) .  \tag{$\left( 2.6\right) $}
\end{equation}%
On the other hand, \ $\forall \ X,\ Z\in \Gamma \left( \mathcal{E}_{\left(
1\right) }\right) \ \ $and$\ \ \forall \ Y\in \Gamma \left( \mathcal{E}%
_{\left( -1\right) }\right) $%
\begin{equation*}
X\left( h\left( Y,Z\right) \right) =h\left( \nabla _{X}Y,\ Z\right) +h\left(
Y,\ \nabla _{X}Z\right) =0,\ \ \ \ \ \ \ \ \ \text{since \ \ }h\left(
Y,Z\right) =0.
\end{equation*}%
Using \ $\left( 2.6\right) ,$ \ this gives\ that \ \ $h\left( Y,\ \nabla
_{X}Z\right) =0,$ \ $\ \ \forall \ Y\in \Gamma \left( \mathcal{E}_{\left(
-1\right) }\right) $ \ \ and therefore $\ \nabla _{X}Z\in \Gamma \left( 
\mathcal{E}_{\left( 1\right) }\right) .$ \ But then,%
\begin{equation*}
P\left( \nabla _{X}Z\right) =\nabla _{X}Z=\nabla _{X}\left( PZ\right)
\end{equation*}%
which gives \ 
\begin{equation}
\left( \nabla _{X}P\right) Z=0;\ \ \ \ \ \ \ \forall \ X,\ Z\in \Gamma
\left( \mathcal{E}_{\left( 1\right) }\right)  \tag{$\left( 2.7\right) $}
\end{equation}%
By a similar argument\ we also get%
\begin{equation}
\left( \nabla _{X}P\right) Z=0;\ \ \ \ \ \ \ \ \ \ \forall \ X,\ Z\in \Gamma
\left( \mathcal{E}_{\left( -1\right) }\right)  \tag{$\left( 2.8\right) $}
\end{equation}%
But then $\ \ \left( 2.4\right) ,\ \left( 2.5\right) ,\ \left( 2.7\right) ,\ 
$and $\ \left( 2.8\right) $ \ give us that \ $\nabla P=0,$ \ i.e. \ $P$ \ is
parallel. \ The rest of the statements in \ $\left( i\right) $ \ will follow
from Lemma $\left( 2.6\right) $.

$ii):$ \ This will follow by mimicking the arguments used \ in $\left(
i\right) .$

$\left( B\right) :$ \ Now, observing that \ \ $S_{P}=-S_{\widehat{P}}$ , \ \ 
$\nabla P=\mathcal{-}\nabla \widehat{P}$ \ \ and \ \ $\nabla G=\mathcal{-}%
\frac{\sqrt{5}}{2}\nabla \widehat{P}=-\nabla \widehat{G},$ \ together with
the part \ $\left( A\right) ,$\ \ proofs of the statements \ $\left(
i\right) $ \ to \ $\left( vii\right) $ \ in part \ $B$ \ will easily follow. 
$\ \ \ \ \ \ \ \ \ \ \ \ \ \ \blacksquare $

For an almost para-Hermitian manifold \ $\ M_{P}=\left( M,\ h,\ P\right) $
and \ an almost golden Hermitian manifold \ $\ M_{G}=\left( M,\ h,\ G\right)
,$ \ ( note that, here the metric is hyperbolic with respect to the
indicated structures rather than pure) we do not have Proposition \ $(2.6/A)$
type of results. Instead, some conditions on the operator \ $S_{\varphi },$
with \ $\varphi \left( =P,\ G\right) $ \ induce some extra subclasses of
those manifolds. To be precise:

\begin{definition}
(2.5): \ 
\end{definition}

$\left( A\right) :$ \ An almost para-Hermitian manifold \ $\ \left( M,\ h,\
P\right) $ with its Levi-Civita connection \ $\nabla $\ \ is said to be, $\
\left( \left[ \mathbf{7}\right] \right) ,$

$\ i)$ \ \textit{nearly}\ \textit{para-Kaehler }if \ \ $\left( \nabla
_{X}P\right) X=0,\ \ \ \ \forall \ X\in \Gamma \left( TM\right) $

$ii)$ \ \textit{quasi}\ \textit{para-Kaehler }if \ \ $S_{P}=0.\ \ \ \ $

$\ iii)$ \ \textit{semi}\ \textit{para-Kaehler }if $\ \func{div}\left(
P\right) =0$ $\ (\ $equivalently, $\ \overset{m}{\underset{i=1}{\dsum }}%
h_{ii}S_{P}\left( e_{i},\ e_{i}\right) =0\ \ $where \ $\left\{
e_{_{1}},...,e_{m};\ Pe_{_{1}},...,Pe_{m}\right\} $ \ is a local orthonormal
frame field for \ $\Gamma \left( TM\right) $\ \ \ and \ $h_{ii}=h\left(
e_{i},\ e_{i}\right) )$

$\left( B\right) :$ \ An almost golden-Hermitian manifold \ $\ \left( M,\
h,\ G\right) $ with its Levi-Civita connection \ $\nabla $\ \ is said to be

$\ i)$ \ \textit{nearly}\ \textit{golden}-\textit{Kaehler } \ \ $\left(
\nabla _{X}G\right) X=0,\ \ \ \ \forall \ X\in \Gamma \left( TM\right) $

$ii)$ \ \textit{quasi}\ \textit{golden}-\textit{Kaehler }if \ \ $S_{\mathcal{%
R}}=0,\ \ \ \ $where \ $\mathcal{R}=P_{G},$ \ $G$-associated product
structure.

$iii)$ \ \textit{semi}\ \textit{golden}-\textit{Kaehler }if $\ \func{div}%
\left( G\right) =0.\ \ $

\section{3. Harmonicity}

\begin{definition}
$\left( 3.1\right) :$ A distribution \ $D$ \ over a $\left( semi\right) $
Riemannian manifold \ $\left( M,\ h\right) $ \ with its\ Levi-Civita
connection \ $\nabla ,$ \ is said to be \ 
\end{definition}

$\ i)$ \ $\left( c.f.\text{ }\left[ \mathbf{10}\right] \right) $ \ \textit{%
Vidal} if \ 
\begin{equation*}
\nabla _{_{X}}X\in D,\ \ \forall \ X\in \Gamma \left( D\right) .
\end{equation*}

$ii)$ \ $\left( \left[ \mathbf{1}\right] \right) \ \ $critical if

\begin{equation*}
\sum_{i=1}^{n}h_{ii}\nabla _{v_{i}}v_{i}\in D
\end{equation*}

If the restriction \ $h\mid _{D}$of \ $h$ \ to \ $D$ \ is positive (or
negative ) definite then the critical distribution \ $D$ \ is also called 
\textit{minimal}. Here \ $\left\{ v_{_{1}},...,v_{_{n}}\right\} $ \ is a
local orthonormal frame field for \ $D$ \ \ and \ \ $h_{ii}=h\left( v_{i},\
v_{i}\right) .$

\begin{remark}
$\left( 3.1\right) :$
\end{remark}

$1)$ \ For an almost product (or an almost golden) Riemannian manifold $\ \
\left( M,\ h,\ \varphi \left( =P,\ G\right) \right) $ \ 

$\ i)$ every Vidal distribution is critical.

$\ ii)$ \ if $\ \varphi $ \ is parallel then the eigendistributions \ $%
\mathcal{E}_{\left( k\right) }$ \ and \ \ $\mathcal{E}_{\left( \bar{k}%
\right) }$ \ of \ $\varphi $ \ are both Vidal and therefore they are
minimal. Here $k=1,$ $\ \bar{k}=-1$ \ for $\varphi =P$ \ and \ $k=\sigma ,$ $%
\ \bar{k}=\bar{\sigma}$ \ for $\varphi =G$

$iii)$ \ the eigendistributions \ $\mathcal{E}_{\left( 1\right) }$ \ and \ \ 
$\mathcal{E}_{\left( -1\right) }$ \ of \ $P$ \ ( $resp:$ $\mathcal{E}%
_{\left( \sigma \right) }$ \ and \ \ $\mathcal{E}_{\left( \bar{\sigma}%
\right) }$ \ of \ $G$ )\ are both minimal if and only if \ $\func{div}P=0,$
that is, $\left( M,\ h,\ P\right) $ \ is semi decomposable product
Riemannian \ ( $resp:$ \ $\func{div}G=0,$ \ that is, \ $\left( M,\ h,\
G\right) $ semi decomposable golden Riemannian) manifold

$2)$ \ For an almost product (or an almost golden) manifold $\ \ \left( M,\
h,\ \varphi \left( =P,\ G\right) \right) $ \ with a pure or hyperbolic
metric \ $h,$ \ If \ $\left\{ P,\ G\right\} $ \ is a twin pair \ then the
following are equivalent:

$i)\ \ \func{div}P=0$ \ 

$ii)\ \ \func{div}G=0$

\begin{lemma}
$\left( 3.1\right) :$ \ Let\ $\ \ F:\left( M,\ \varphi _{_{M}}\right)
\rightarrow \left( N,\ \varphi _{_{N}}\right) $ \ be a smooth map with its
differential map \ $F_{\ast }:TM\rightarrow TN,$ \ where \ $\varphi \left(
=P,\ G\right) .$
\end{lemma}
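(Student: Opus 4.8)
The plan is to prove everything fibrewise: for a fixed point $p\in M$ the differential $F_*$ at $p$ is an $\mathbb{R}$-linear map $T_pM\to T_{F(p)}N$, and the structures $\varphi_M$ at $p$, $\varphi_N$ at $F(p)$ are honest endomorphisms of those fibres, so the statement reduces to linear algebra on the eigenbundle splittings $TM=\mathcal{E}_{(k)}^{\varphi_M}\oplus\mathcal{E}_{(\bar k)}^{\varphi_M}$ and $TN=\mathcal{E}_{(k)}^{\varphi_N}\oplus\mathcal{E}_{(\bar k)}^{\varphi_N}$ furnished by item $2)$ of Section 2. No connection, metric, or curvature enters, so the argument is entirely algebraic and pointwise.

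First I would handle the intertwining condition $F_*\circ\varphi_M=\varphi_N\circ F_*$. Given $v\in T_pM$, decompose $v=v_++v_-$ with $v_\pm$ in the two eigenspaces of $\varphi_M$; applying the two sides gives $k\,F_*v_++\bar k\,F_*v_-$ on the left and $\varphi_N(F_*v_++F_*v_-)$ on the right. Because $k\neq\bar k$ and $TN$ splits, equality for all $v$ is equivalent to $F_*\big(\mathcal{E}_{(k)}^{\varphi_M}\big)\subseteq\mathcal{E}_{(k)}^{\varphi_N}$ together with $F_*\big(\mathcal{E}_{(\bar k)}^{\varphi_M}\big)\subseteq\mathcal{E}_{(\bar k)}^{\varphi_N}$. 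The "anti" variant $F_*\circ\varphi_M=\widehat{\varphi_N}\circ F_*$ is the same computation once one recalls from notes $(v)$ and $(vi)$ of Section 2 that $\widehat{\varphi}$ has the same eigenvalues as $\varphi$ with the two eigendistributions interchanged; this produces the swapped inclusions $F_*\big(\mathcal{E}_{(k)}^{\varphi_M}\big)\subseteq\mathcal{E}_{(\bar k)}^{\varphi_N}$ and $F_*\big(\mathcal{E}_{(\bar k)}^{\varphi_M}\big)\subseteq\mathcal{E}_{(k)}^{\varphi_N}$.

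Next, to translate between the product picture and the golden picture I would invoke the explicit polynomial identities $\mathcal{G}=G_P=\frac{1}{2}(I+\sqrt5\,P)$ and $\mathcal{R}=P_G=\frac{1}{\sqrt5}(2G-I)$ together with linearity of $F_*$. For twin pairs $\{P_M,G_M\}$ on $M$ and $\{P_N,G_N\}$ on $N$, $F_*\circ P_M=P_N\circ F_*$ gives at once $F_*\circ G_M=\frac{1}{2}(F_*+\sqrt5\,F_*P_M)=\frac{1}{2}(I+\sqrt5\,P_N)F_*=G_N\circ F_*$, and running the same computation with $P_G$ in place of $G_P$ yields the converse; the analogous substitution with $\widehat P=-P$ and $\widehat G=I-G$, using note $(vii)$ of Section 2 that $\{\widehat P,\widehat G\}$ is again a twin pair with $\widehat G=G_{\widehat P}$, matches the anti-paraholomorphic and antigolden conditions, and via the eigendistribution descriptions above (and note $(viii)$) shows all of these are the same sub-bundle inclusions written in two notations.

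I do not expect a real obstacle: the whole proof is linear and fibrewise. The one point needing care is consistent bookkeeping of the two conjugation conventions — $\widehat P=-P$ is linear in the structure while $\widehat G=I-G$ is only affine — so that the correspondence $P\leftrightarrow G$ genuinely carries "$+$"-type maps to "$+$"-type and "anti" to "anti"; this is exactly what notes $(v)$–$(viii)$ of Section 2 were arranged to guarantee, so citing them keeps the verification short and mechanical.
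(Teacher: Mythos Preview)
Your write-up addresses the wrong statement. Lemma~$(3.1)$ in the paper is a \emph{constancy} result: it asserts that if the differential of $F$ intertwines a product structure on one side with a golden structure on the other (for instance $F_*\circ P_M=G_N\circ F_*$, or any of the six hatted variants listed in part~$(ii)$), then $F$ must be constant. What you have outlined is instead the eigendistribution characterisation of the \emph{matched} conditions $F_*\circ\varphi_M=\varphi_N\circ F_*$ (same type of structure on both sides) together with the equivalence, for twin pairs, between $(P_M,P_N)$-paraholomorphic and $(G_M,G_N)$-golden. That is essentially the content of Proposition~$(3.1)$, not Lemma~$(3.1)$.

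The missing idea is precisely the eigenvalue mismatch. An almost product structure has eigenvalues $\{1,-1\}$, whereas an almost golden structure has eigenvalues $\{\sigma,\bar\sigma\}$ with $\sigma=\tfrac12(1+\sqrt5)$, $\bar\sigma=\tfrac12(1-\sqrt5)$; these two sets are disjoint (and remain disjoint after passing to any of the conjugates $\widehat P$, $\widehat G$). Now suppose $F_*\circ P_M=G_N\circ F_*$ and take $v\in\mathcal E^{P_M}_{(1)}$: the left side gives $F_*v$, the right side gives $G_N(F_*v)$, so $F_*v$ would lie in the $1$-eigenspace of $G_N$, which is $\{0\}$. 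The same reasoning on $\mathcal E^{P_M}_{(-1)}$ forces $F_*\equiv0$, hence $F$ is constant. The paper simply cites reference~$[13]$ for case~$(i)$ and remarks that the identical argument handles all the cases in~$(ii)$. Your fibrewise linear-algebraic framework is perfectly suited to carrying this out; you just need to apply it to the mismatched hypotheses actually stated in the lemma rather than to the matched ones.
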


$\ i)$ \ If $\ F_{\ast }\circ P_{M}=G_{N}\circ F_{\ast }$ \ or \ \ $F_{\ast
}\circ G_{M}=P_{N}\circ F_{\ast }$ \ then \ $F$ \ is constant.

$ii)$ \ If any one of the following $\ $

\begin{itemize}
\item $F_{\ast }\circ P_{M}=\widehat{G}_{N}\circ F_{\ast },$ \ \ 

\item $F_{\ast }\circ \widehat{P}_{M}=\widehat{G}_{N}\circ F_{\ast },$ \ \ \ 

\item $F_{\ast }\circ \widehat{P}_{M}=G_{N}\circ F_{\ast },$\ \ \ \ 

\item $F_{\ast }\circ G_{M}=\widehat{P}_{N}\circ F_{\ast },$ \ \ 

\item $F_{\ast }\circ \widehat{G}_{M}=\widehat{P}_{N}\circ F_{\ast },$ \ \ 

\item $F_{\ast }\circ \widehat{G}_{M}=P_{N}\circ F_{\ast }$ \ \ 
\end{itemize}

holds then \ $F$ \ is constant

\begin{proof}
The statement $\left( i\right) \ \ $is treated in $\ \left( \ \left[ \mathbf{%
13}\right] ,\text{ \ Theorem }7\&8\right) .$ \ The argument used in \ $\left[
\mathbf{13}\right] $ \ works for all the cases \ in \ $\ \left( ii\right) \ $
\end{proof}

\begin{definition}
$\left( 3.2\right) :$ \ A smooth map \ $F:\left( M,\ \varphi _{_{M}}\right)
\rightarrow \left( N,\ \varphi _{_{N}}\right) $ \ with its differential map
\ $dF=F_{\ast }:TM\rightarrow TN$ \ is said to be \ 
\end{definition}

$\ i)$ \ $\left( \ \left[ \mathbf{2,\ 7}\right] \right) ,$\ $\left( P_{M},\
P_{N}\right) $-\textit{paraholomorphic}, $\left[ resp:\left( P_{M},\
P_{N}\right) \text{-\textit{anti-paraholomorphic}}\right] $ \ \textit{if}%
\begin{equation*}
F_{\ast }\circ P_{M}=P_{N}\circ F_{\ast },\ \ \ \ \ \ \ \ \ \ \ \ \ \left[
resp:\ F_{\ast }\circ P_{M}=\widehat{P}_{N}\circ F_{\ast }=-P\circ F_{\ast }%
\right]
\end{equation*}

$\ ii)$ \ \ 

\begin{itemize}
\item $\left( \left[ \mathbf{13}\right] \right) ,\ \left( G_{M},\
G_{N}\right) $-\textit{golden}\ \textit{if}%
\begin{equation*}
F_{\ast }\circ G_{M}=G_{N}\circ F_{\ast }
\end{equation*}

\item $\left( G_{M},\ G_{N}\right) $-\textit{antigolden if \ }%
\begin{equation*}
F_{\ast }\circ G_{M}=\widehat{G}_{N}\circ F_{\ast }=I_{N}-G_{N}F_{\ast }
\end{equation*}%
where \ $\widehat{\varphi }\left( =\widehat{P},\ \widehat{G}\right) $ \ is
the conjugate of \ $\varphi .$
\end{itemize}

We shall be writing \ $\ \pm \left( P_{M},\ P_{N}\right) $-paraholomorphic%
\textit{\ }to mean either $\left( P_{M},\ P_{N}\right) $-paraholomorphic%
\textit{\ }or\textit{\ \ }$\left( P_{M},\ P_{N}\right) $%
-anti-paraholomorphic. Similarly,\textit{\ }We shall be writing $\pm \left(
G_{M},\ G_{N}\right) $-golden to mean either $\ \left( G_{M},\ G_{N}\right) $%
-golden or $\left( G_{M},\ G_{N}\right) $-antigolden$.$

Note that since \ $\widehat{\widehat{\varphi }}=\varphi ,$ \ we have:

If a map $\ F:\left( M,\ \varphi _{_{M}}\right) \rightarrow \left( N,\
\varphi _{_{N}}\right) $ \ is $\ \left( P_{M},\ P_{N}\right) $%
-paraholomorphic\ then \ it is \ $\left( P_{M},\ \widehat{P}_{N}\right) $%
-anti-paraholomorphic as a map \ $F:\left( M,\ P_{_{M}}\right) \rightarrow
\left( N,\ \widehat{P}_{_{N}}\right) $, and if $\ \left( G_{M},\
G_{N}\right) $-golden then \ it is $\ \ \left( G_{M},\ \widehat{G}%
_{N}\right) $-antigolden as a map \ $F:\left( M,\ G_{_{M}}\right)
\rightarrow \left( N,\ \widehat{G}_{_{N}}\right) .$ \ Conversely, If a map $%
\ F:\left( M,\ \varphi _{_{M}}\right) \rightarrow \left( N,\ \varphi
_{_{N}}\right) $ \ is $\ \left( P_{M},\ P_{N}\right) $-anti-paraholomorphic\
then \ it is \ $\left( P_{M},\ \widehat{P}_{N}\right) $-paraholomorphic as a
map \ $F:\left( M,\ P_{_{M}}\right) \rightarrow \left( N,\ \widehat{P}%
_{_{N}}\right) $, and if $\ \left( G_{M},\ G_{N}\right) $-antigolden then \
it is $\ \ \left( G_{M},\ \widehat{G}_{N}\right) $-golden as a map \ $%
F:\left( M,\ G_{_{M}}\right) \rightarrow \left( N,\ \widehat{G}%
_{_{N}}\right) .$

\begin{proposition}
$\left( 3.1\right) :$ \ For twin pairs$\ \ \left\{ P_{M},\ G_{M}\right\} $ $%
\ $and $\ \left\{ P_{N},\ G_{N}\right\} $\ $\ $let$\ \ F:\left( M,\ \varphi
_{_{M}}\left( =P_{M},\ G_{M}\right) \right) \rightarrow \left( N,\ \varphi
_{_{N}}\left( =P_{N},\ G_{N}\right) \right) $ \ be a smooth map. Then the
following statements are equivalent:
\end{proposition}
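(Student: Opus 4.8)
The plan is to reduce everything to linear algebra on the fibres of $F_\ast\colon TM\to TN$, using that, on either $M$ or $N$, each of the four structures in a twin family is an affine combination of the others with constant real (or $\sqrt5$) coefficients. Recall from Section~2 (item~5 and items~(v)--(vi)) that, dropping the ambient subscript, $G=\tfrac12(I+\sqrt5\,P)$, $P=\tfrac1{\sqrt5}(2G-I)$, $\widehat P=-P$ and $\widehat G=I-G$. Since $F_\ast$ is $\mathbb{R}$-linear on each fibre it commutes with scalars and satisfies $F_\ast\circ I_M=I_N\circ F_\ast$; hence any intertwining relation $F_\ast\circ A_M=B_N\circ F_\ast$ between matching members $A$, $B$ of the two twin families propagates verbatim to every other matching pair.

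Concretely, I would argue in a cycle. Starting from $F_\ast\circ P_M=P_N\circ F_\ast$, apply $F_\ast$ to $G_M=\tfrac12(I+\sqrt5\,P_M)$ and push $F_\ast$ inward to get $F_\ast\circ G_M=\tfrac12\bigl(I+\sqrt5\,P_N\bigr)\circ F_\ast=G_N\circ F_\ast$. Conversely, from $F_\ast\circ G_M=G_N\circ F_\ast$ use $P_M=\tfrac1{\sqrt5}(2G_M-I)$ to recover $F_\ast\circ P_M=P_N\circ F_\ast$. The conjugate statements are immediate: $F_\ast\circ\widehat P_M=-F_\ast\circ P_M$ and $\widehat P_N\circ F_\ast=-P_N\circ F_\ast$, so $F_\ast\circ\widehat P_M=\widehat P_N\circ F_\ast\iff F_\ast\circ P_M=P_N\circ F_\ast$, and likewise $F_\ast\circ\widehat G_M=\widehat G_N\circ F_\ast\iff F_\ast\circ G_M=G_N\circ F_\ast$ via $\widehat G=I-G$. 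Chaining these implications closes the cycle of equivalences. Exactly the same computations, with $P_N$ replaced by $\widehat P_N$ throughout, handle the anti-paraholomorphic/antigolden versions; here item~(vii) of Section~2, $\widehat G=G_{\widehat P}$, guarantees that the hatted target structures still constitute a twin pair, so the substitutions remain legitimate.

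I do not expect a real obstacle: the statement is algebraic and fibrewise, and once the four conversion formulas are in hand every implication is a one-line substitution. The only point that deserves a moment's care is the consistency of the twin labelling on source and target --- one must check that when $F_\ast$ intertwines $P_M$ with $P_N$ it is genuinely the $P$-associated golden structure $G_{P_N}=G_N$, and not its conjugate, that appears on the right. This is precisely what the one-to-one correspondence between almost product and almost golden structures (Section~2, item~(iv)), together with the identity $P_{(G_P)}=P$, secures, so no ambiguity arises.
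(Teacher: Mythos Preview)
Your proposal is correct and follows essentially the same approach as the paper: both arguments reduce the equivalence to the affine conversion formulas $G=\tfrac12(I+\sqrt5\,P)$ and $P=\tfrac1{\sqrt5}(2G-I)$ together with the $\mathbb{R}$-linearity of $F_\ast$ on fibres. The paper writes out the anti-paraholomorphic $\Leftrightarrow$ antigolden case line by line and then says the remaining case is similar, whereas you begin with the paraholomorphic $\Leftrightarrow$ golden case and add the (strictly speaking unnecessary, but harmless) observation that the hatted-source variants are simultaneously equivalent; the substance is identical.
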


$\ i)$ \ \ $F$ \ is \ $\left( P_{M},\ P_{N}\right) $-\textit{paraholomorphic}
$\ \left[ resp:\left( P_{M},\ P_{N}\right) -\text{\textit{%
anti-paraholomorphic}}\right] $\textit{.}

$\ ii)$ \ \ $F$ \ is \ $\left( G_{M},\ G_{N}\right) $-\textit{golden} $\ %
\left[ resp:\left( G_{M},\ G_{N}\right) \text{-\textit{antigolden}}\right] .$

\begin{proof}
\begin{equation*}
F\text{ \ is }\left( G_{M},\ G_{N}\right) \text{-\textit{antigolden}}
\end{equation*}

$\Leftrightarrow $%
\begin{equation*}
\ F_{\ast }\circ G_{M}=\widehat{G}_{N}\circ F_{\ast }
\end{equation*}

$\Leftrightarrow $%
\begin{equation*}
F_{\ast }\circ \left( tI+rP_{M}\right) =\left( tI+r\widehat{P}_{N}\right)
\circ F_{\ast }
\end{equation*}%
Since $\left\{ P_{M},\ G_{M}\right\} $ \ is a twin pair and then so is \ $%
\left\{ \widehat{P}_{M},\ \widehat{G}_{M}\right\} ,$ so that $%
G_{M}=tI+rP_{M} $\ \ and \ \ $\widehat{G}_{M}=tI+r\widehat{P}_{M},$ \ where
\ $t=\frac{1}{2},\ \ r=\frac{\sqrt{5}}{2},$

$\Leftrightarrow $%
\begin{equation*}
tF_{\ast }+r\left( F_{\ast }\circ P_{M}\right) =tF_{\ast }+r\left( \widehat{P%
}_{N}\circ F_{\ast }\right)
\end{equation*}

$\Leftrightarrow $%
\begin{equation*}
F_{\ast }\circ P_{M}=\widehat{P}_{N}\circ F_{\ast }
\end{equation*}

$\Leftrightarrow $

\ 
\begin{equation*}
F\ \ \ \text{is\ \ }\ \left( P_{M},\ P_{N}\right) \text{-\textit{%
anti-paraholomorphic}}
\end{equation*}%
The rest of the cases can be shown similarly. \ \ \ $\blacksquare $
\end{proof}

Let $\ F:\left( M,\ h\right) \rightarrow \left( N,\ g\right) $ be a smooth
map between \ (semi) Riemannian manifolds. \textit{The second fundamental
form } 
\begin{equation*}
\nabla F_{\ast }:\Gamma \left( TM\right) \times \Gamma \left( TM\right)
\rightarrow \Gamma \left( TN\right)
\end{equation*}%
\textit{of }\ $F$ \ is given by \ $\forall \ X,\ Y\in \Gamma \left(
TM\right) $%
\begin{equation*}
\left( \nabla F_{\ast }\right) \left( X,\ Y\right) =\ \overset{N}{\nabla }%
_{\left( F_{\ast }X\right) }\left( F_{\ast }Y\right) -F_{\ast }\left( 
\overset{M}{\nabla }_{X}Y\right)
\end{equation*}%
where \ $\overset{M}{\nabla }$ \ and \ $\overset{N}{\nabla }$ \ are\ the
Levi-Civita connections on \ $M$ \ and \ \ $N$ \ respectively. Note that the
map \ $\nabla F_{\ast }$ \ is bilinear and symmetric, $\left( see\ \left[ 
\mathbf{1,6}\right] \right) $

For a given distribution \ $D$ \ over a $\left( semi\right) $ Riemannian
manifold \ $\left( M,\ h\right) ,$ \ the \ $D$-\textit{tension field} \ \ $%
\mathcal{T}_{D}\mathcal{(}F\mathcal{)}$ \ of \ $F:\left( M,\ h\right)
\rightarrow \left( N,\ g\right) $ \ is given by $\left( c.f.\ \ \left[ 
\mathbf{1,6,\ 7}\right] \right) $ 
\begin{equation}
\mathcal{T}_{D}\mathcal{(}F\mathcal{)=}\dsum\limits_{i,j=1}^{s}h^{ij}\left(
\nabla F_{\ast }\right) \left( e_{i},\ e_{j}\right) \in \Gamma \left(
TN\right)  \tag{$\left( 3.1\right) $}
\end{equation}%
where $\ \left\{ e_{_{1}},...,e_{s}\right\} $ \ is a local frame field for \ 
$D$ \ and \ $\left( h^{ij}\right) =\left( h_{^{ij}}\right) ^{-1},\ \
h_{^{ij}}=h\left( e_{i},\ e_{j}\right) .$ \ In particular, if \ $\left\{
e_{_{1}},...,e_{s}\right\} $ \ is a local \ $h$-orthonormal frame field for
\ $D$ \ then the expression \ $\left( 3.1\right) $\ \ takes the form \ 

\begin{equation}
\mathcal{T}_{D}\mathcal{(}F\mathcal{)=}\dsum\limits_{i=1}^{s}h^{ii}\left(
\nabla F_{\ast }\right) \left( e_{i},\ e_{i}\right) \in \Gamma \left(
TN\right)  \tag{$\left( 3.2\right) $}
\end{equation}%
In the cases where \ $D=TM,$ \ \ we simply write \ $\mathcal{T(}F\mathcal{)}$
\ for $\mathcal{T}_{TM}\mathcal{(}F\mathcal{)}$\ call it the \ "\textit{%
tension field of \ }$F$\textit{\ "}

\begin{definition}
$\left( 3.3\right) :$ $\left( c.f.\ \left[ \mathbf{1,\ 6}\right] \right) $ \
A smooth map\ $\ F:\left( M,\ h\right) \rightarrow \left( N,\ g\right) $ \
is said to be harmonic \ $\left[ resp:\text{ }D\text{-harmonic}\right] $ \
if its tension field \ $\left[ resp:\text{ }D\text{-tension field}\right] $%
vanishes. In particular,
\end{definition}

\begin{itemize}
\item for a map \ $F:\left( M,\ h,\ P\right) \rightarrow \left( N,\ g\right) 
$ \ from an almost product Riemannian manifold \ $M$, if $D=$ $\overset{M}{%
\mathcal{E}}_{\left( 1\right) }\left[ resp:D=\ \overset{M}{\mathcal{E}}%
_{\left( -1\right) }\right] $ \ then \ $D$-\textit{harmonic} \textit{\ }$F$%
\textit{\ \ is also called plus-eigen harmonic }$\left[ resp:\mathit{\ }%
\text{minus-eigen harmonic}\right] $\textit{.}

\item for a map \ $F:\left( M,\ h,\ G\right) \rightarrow \left( N,\ g\right) 
$ \ from an almost golden Riemannian manifold \ $M$, if $D=$ $\overset{M}{%
\mathcal{E}}_{\left( \sigma \right) }\left[ resp:D=\ \overset{M}{\mathcal{E}}%
_{\left( \bar{\sigma}\right) }\right] $ \ then \ $D$-\textit{harmonic} 
\textit{\ }$F$\textit{\ \ is also called plus-eigen harmonic }$\left[ resp%
\text{: minus-eigenharmonic}\right] $\textit{).}
\end{itemize}

\begin{proposition}
$\left( 3.2\right) :$ \ For an almost product manifolds $\ \ \left( M,\ h,\
P\right) ,\ \ \ \left( N,\ g,\ Q\right) $ \ \ with pure or hyperbolic metric
\ $h$ $\ $with respect to \ $P$ \ and pure or hyperbolic metric $\ g$ $\ $%
with respect to $Q,$\ \ let \ $F:\left( M,\ h,\ P\right) \rightarrow \left(
N,\ g,\ Q\right) $ \ be a \ $\pm \left( P,\ Q\right) $-paraholomorphic map.
Then for every local sections \ \ $X,\ Y\in \Gamma \left( TM\right) ,$
\end{proposition}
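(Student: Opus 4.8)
The plan is to obtain the asserted identity by a direct expansion of the second fundamental form, using only the definition of a $\pm(P,\ Q)$-paraholomorphic map and the Leibniz rule for the connections involved; neither integrability of $P$ or $Q$ nor the metric hypotheses are needed for the identity itself, the latter entering only if one subsequently contracts against $g(\,\cdot\,,\ F_{*}Z)$ on the way to the harmonicity theorems.

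First I would treat the $(P,\ Q)$-paraholomorphic case $F_{*}\circ P=Q\circ F_{*}$. Writing out $(\nabla F_{*})(X,\ PY)=\overset{N}{\nabla}_{F_{*}X}\big(F_{*}(PY)\big)-F_{*}\big(\overset{M}{\nabla}_{X}(PY)\big)$, I substitute $F_{*}(PY)=Q(F_{*}Y)$ in the first term and $\overset{M}{\nabla}_{X}(PY)=(\overset{M}{\nabla}_{X}P)Y+P\,\overset{M}{\nabla}_{X}Y$ in the second. Expanding $\overset{N}{\nabla}_{F_{*}X}(QF_{*}Y)=(\overset{N}{\nabla}_{F_{*}X}Q)(F_{*}Y)+Q\big(\overset{N}{\nabla}_{F_{*}X}F_{*}Y\big)$ and using paraholomorphy once more in the form $F_{*}(P\,\overset{M}{\nabla}_{X}Y)=Q\big(F_{*}\overset{M}{\nabla}_{X}Y\big)$, the $Q$-terms recombine into $Q\big[\overset{N}{\nabla}_{F_{*}X}F_{*}Y-F_{*}\overset{M}{\nabla}_{X}Y\big]=Q\big((\nabla F_{*})(X,\ Y)\big)$, and one is left with
\[
(\nabla F_{*})(X,\ PY)=Q\big((\nabla F_{*})(X,\ Y)\big)+(\overset{N}{\nabla}_{F_{*}X}Q)(F_{*}Y)-F_{*}\big((\overset{M}{\nabla}_{X}P)Y\big).
\]
Since $\nabla F_{*}$ is symmetric, exchanging the roles of $X$ and $Y$ yields the companion formula for $(\nabla F_{*})(PX,\ Y)$, so both ``slot'' versions are available.

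For the $(P,\ Q)$-anti-paraholomorphic case one has $F_{*}\circ P=\widehat{Q}\circ F_{*}$ with $\widehat{Q}=-Q$ again an almost product structure on $N$; running the identical computation with $Q$ replaced by $\widehat{Q}$ (hence $\overset{N}{\nabla}Q$ by $-\overset{N}{\nabla}Q$) gives the same statement with $Q$ replaced by $\widehat{Q}$. This uniformity across the two cases is precisely what permits the $\pm(P,\ Q)$ phrasing.

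There is no genuine obstacle in the derivation; the only points needing care are bookkeeping. One must remember that $F_{*}Y$ is a section of the pulled-back bundle $F^{-1}TN$, so all covariant derivatives on the $N$-side are taken with the pullback connection, for which the Leibniz rule still holds and $\overset{N}{\nabla}_{F_{*}X}Q$ is the pullback of $\overset{N}{\nabla}Q$; and one must invoke the (anti-)paraholomorphy condition twice, once on $F_{*}(PY)$ and once on $F_{*}(P\,\overset{M}{\nabla}_{X}Y)$. If the intended conclusion is instead the version obtained after pairing with $g(\,\cdot\,,\ F_{*}Z)$, the additional input is the compatibility of $g$ with $Q$ --- $g(QU,\ V)=g(U,\ QV)$ in the pure case, $g(QU,\ V)=-g(U,\ QV)$ in the hyperbolic case --- together with the resulting $g$-symmetry (resp.\ $g$-skewness) of $\overset{N}{\nabla}Q$, which lets the term $g\big((\overset{N}{\nabla}_{F_{*}X}Q)(F_{*}Y),\ F_{*}Z\big)$ be transferred onto the $F_{*}Z$ entry.
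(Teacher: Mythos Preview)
Your computation is correct as far as it goes and is exactly the first step of the paper's argument: the one-slot identity
\[
(\nabla F_{*})(X,\ PY)=Q\big((\nabla F_{*})(X,\ Y)\big)+(\overset{N}{\nabla}_{X'}Q)Y'-F_{*}\big((\overset{M}{\nabla}_{X}P)Y\big).
\]
However, the proposition's actual conclusion (equations~(3.3) and~(3.4), which sit just outside the \texttt{proposition} environment) is the \emph{two-slot} identity for $(\nabla F_{*})(PX,\ PY)$, and you stop before reaching it. The remaining work is not merely ``having both slot versions available'': one must (i)~replace $X$ by $PX$ in your displayed formula, obtaining $(\nabla F_{*})(PX,\ PY)=Q\big((\nabla F_{*})(PX,\ Y)\big)+(\overset{N}{\nabla}_{QX'}Q)Y'-F_{*}\big((\overset{M}{\nabla}_{PX}P)Y\big)$; (ii)~feed the companion formula for $(\nabla F_{*})(Y,\ PX)=(\nabla F_{*})(PX,\ Y)$ back into this, using $Q^{2}=I$; and (iii)~invoke the relation $\varphi\circ(\nabla\varphi)=-(\nabla\varphi)\circ\varphi$ (valid for any almost product structure since $\varphi^{2}=I$) to convert $Q\big((\overset{N}{\nabla}_{Y'}Q)X'\big)$ into $-(\overset{N}{\nabla}_{Y'}Q)(QX')$ and likewise on the $M$-side. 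Only then does one arrive at~(3.3); specializing $Y=X$ and recognizing the $S_{P}$, $S_{Q}$ operators yields~(3.4).

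So the approach is right and matches the paper's, but your write-up is missing the iteration step and the use of $\varphi\circ(\nabla\varphi)=-(\nabla\varphi)\circ\varphi$, which are the substance of the proposition beyond the one-slot lemma you proved. Your final paragraph about pairing with $g(\,\cdot\,,F_{*}Z)$ is not part of this proposition at all; the identities~(3.3)--(3.4) are purely tensorial and make no use of the metric hypotheses, as you correctly noted at the outset.
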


\begin{equation}
\begin{tabular}{ll}
$\left( \nabla F_{\ast }\right) \left( PX,\ PY\right) $ & $=\left( \nabla
F_{\ast }\right) \left( X,\ Y\right) +\left( \overset{N}{\nabla }_{QX\
^{\prime }}Q\right) Y\ ^{\prime }-\left( \overset{N}{\nabla }_{Y\ ^{\prime
}}Q\right) \left( QX\ ^{\prime }\right) $ \\ 
& $-F_{\ast }\left[ \left( \overset{M}{\nabla }_{PX\ }P\right) Y-\left( 
\overset{M}{\nabla }_{Y}P\right) \left( PX\right) \right] $%
\end{tabular}
\tag{$\left( 3.3\right) $}
\end{equation}%
In particular,%
\begin{equation}
\begin{tabular}{ll}
$\left( \nabla F_{\ast }\right) \left( PX,\ PX\right) $ & $=\left( \nabla
F_{\ast }\right) \left( X,\ X\right) +S_{Q}\left( QX\ ^{\prime },X\ ^{\prime
}\right) -F_{\ast }\left[ S_{P}\left( PX\ ,X\right) \right] $ \\ 
& $=\left( \nabla F_{\ast }\right) \left( X,\ X\right) +Q\left\{ S_{Q}\left(
X\ ^{\prime },X\ ^{\prime }\right) -\lambda F_{\ast }\left[ S_{P}\left( X\
,X\right) \right] \right\} $%
\end{tabular}%
,  \tag{$\left( 3.4\right) $}
\end{equation}%
where \ $X\ ^{\prime }=F_{\ast }X,\ \ \ \ \ Y\ ^{\prime }=F_{\ast }Y,\ \ $%
and $\ $\ $\lambda =1$ \ when \ $F$ \ is $\ \left( P,\ Q\right) $%
-holomorphic, \ $\lambda =-1$ \ when \ $F$ \ is $\ \left( P,\ Q\right) $%
-antiholomorphic.

\begin{proof}
Let \ $F$ \ be a $\ \left( P,\ Q\right) $-paraholomorphic map so that \ \ $%
F_{\ast }\circ P=Q\circ F_{\ast }$ \ then%
\begin{equation*}
\begin{tabular}{ll}
$\left( \nabla F_{\ast }\right) \left( X,\ PY\right) $ & $=\ \overset{N}{%
\nabla }_{X\ ^{\prime }}\left( PY\right) ^{\prime }-F_{\ast }\left( \overset{%
M}{\nabla }_{X}\left( PY\right) \right) =\ \overset{N}{\nabla }_{X\ ^{\prime
}}Q\left( Y^{\prime }\right) -F_{\ast }\left( \overset{M}{\nabla }_{X}\left(
PY\right) \right) $ \\ 
& $=\left( \overset{N}{\nabla }_{X\ ^{\prime }}Q\right) Y\ ^{\prime
}+Q\left( \overset{N}{\nabla }_{X\ ^{\prime }}Y\ ^{\prime }\right) -F_{\ast }%
\left[ \left( \overset{M}{\nabla }_{X}P\right) Y+P\left( \overset{M}{\nabla }%
_{X}Y\right) \right] $ \\ 
& $=Q\left[ \overset{N}{\nabla }_{X\ ^{\prime }}Y\ ^{\prime }-F_{\ast
}\left( \overset{M}{\nabla }_{X}Y\right) \right] +\left( \overset{N}{\nabla }%
_{X\ ^{\prime }}Q\right) Y\ ^{\prime }-F_{\ast }\left[ \left( \overset{M}{%
\nabla }_{X}P\right) Y\right] $%
\end{tabular}%
\end{equation*}%
So we get%
\begin{equation*}
\left( \nabla F_{\ast }\right) \left( X,\ PY\right) =Q\left[ \left( \nabla
F_{\ast }\right) \left( X,\ Y\right) \right] +\left( \overset{N}{\nabla }%
_{X\ ^{\prime }}Q\right) Y\ ^{\prime }-F_{\ast }\left[ \left( \overset{M}{%
\nabla }_{X}P\right) Y\right]
\end{equation*}%
But then this gives us

\begin{equation}
\begin{tabular}{ll}
$\left( \nabla F_{\ast }\right) \left( PX,\ PY\right) $ & $=Q\left[ \left(
\nabla F_{\ast }\right) \left( PX,\ Y\right) \right] +\left( \overset{N}{%
\nabla }_{\left( PX\right) ^{\prime }}Q\right) Y\ ^{\prime }-F_{\ast }\left[
\left( \overset{M}{\nabla }_{PX}P\right) Y\right] $ \\ 
& $=Q\left[ \left( \nabla F_{\ast }\right) \left( PX,\ Y\right) \right]
+\left( \overset{N}{\nabla }_{Q\left( X\ ^{\prime }\right) }Q\right) Y\
^{\prime }-F_{\ast }\left[ \left( \overset{M}{\nabla }_{PX}P\right) Y\right] 
$%
\end{tabular}
\tag{$\left( 3.5\right) $}
\end{equation}%
and \ 
\begin{equation}
\left( \nabla F_{\ast }\right) \left( Y,\ PX\right) =Q\left[ \left( \nabla
F_{\ast }\right) \left( Y,\ X\right) \right] +\left( \overset{N}{\nabla }%
_{Y\ ^{\prime }}Q\right) X\ ^{\prime }-F_{\ast }\left[ \left( \overset{M}{%
\nabla }_{Y}P\right) X\right]  \tag{$\left( 3.6\right) $}
\end{equation}%
Using $\ \left( 3.6\right) $ \ in \ $\left( 3.5\right) $ \ and the symmetry
of \ $\nabla F_{\ast },$ \ we get%
\begin{equation}
\begin{tabular}{ll}
$\left( \nabla F_{\ast }\right) \left( PX,\ PY\right) $ & $=\left( \nabla
F_{\ast }\right) \left( X,\ Y\right) +\left( \overset{N}{\nabla }_{Q\left(
X\ ^{\prime }\right) }Q\right) Y\ ^{\prime }+Q\left( \left( \overset{N}{%
\nabla }_{Y\ ^{\prime }}Q\right) X\ ^{\prime }\right) $ \\ 
& $-F_{\ast }\left[ \left( \overset{M}{\nabla }_{PX}P\right) Y+P\left(
\left( \overset{M}{\nabla }_{Y}P\right) X\right) \right] $%
\end{tabular}
\tag{$\left( 3.7\right) $}
\end{equation}%
Finally using the fact that \ $P\circ \left( \nabla P\right) =-\left( \nabla
P\right) \circ P$ \ in the equation $\left( 3.7\right) $ \ we get \ the
desired result \ $\left( 3.3\right) .$
\end{proof}

In particular, \ since

\begin{equation*}
\begin{tabular}{l}
$S_{Q}\left( QX\ ^{\prime },X\ ^{\prime }\right) =\left( \overset{N}{\nabla }%
_{QX\ ^{\prime }}Q\right) X\ ^{\prime }-\left( \overset{N}{\nabla }_{X\
^{\prime }}Q\right) \left( QX\ ^{\prime }\right) =Q\left( S_{Q}\left( X\
^{\prime },X\ ^{\prime }\right) \right) $ \\ 
$S_{P}\left( PX\ ,X\right) =\left( \overset{M}{\nabla }_{PX\ }P\right)
X-\left( \overset{M}{\nabla }_{X}P\right) \left( PX\right) =P\left(
S_{P}\left( X\ ,X\ \right) \right) $%
\end{tabular}%
\end{equation*}

we get the equation $\ \left( 3.4\right) ,$ \ that is,%
\begin{equation*}
\begin{tabular}{ll}
$\left( \nabla F_{\ast }\right) \left( PX,\ PX\right) $ & $=\left( \nabla
F_{\ast }\right) \left( X,\ X\right) +S_{Q}\left( QX\ ^{\prime },X\ ^{\prime
}\right) -F_{\ast }\left[ S_{P}\left( PX\ ,X\right) \right] $ \\ 
& $=\left( \nabla F_{\ast }\right) \left( X,\ X\right) +Q\left\{ S_{Q}\left(
X\ ^{\prime },X\ ^{\prime }\right) -F_{\ast }\left[ S_{P}\left( X\ ,X\right) %
\right] \right\} .$%
\end{tabular}%
\end{equation*}%
For the case where \ \ $F$ \ is $\ \left( P,\ Q\right) $%
-anti-paraholomorphic, the same argument works so that the required result \ 
$\left( 3.4\right) $ \ follows. \ \ \ \ \ \ \ $\blacksquare $

For a pair of almost product $\left[ resp:\text{almost golden}\right] $
manifolds $\ \left( M,\ h,\ P\right) $, \ \ $\left( N,\ g,\ Q\right) \ \ \ %
\left[ resp:\left( M,\ h,\ G\right) ,\ \ \ \left( N,\ g,\ K\right) \right] $
\ \ with pure or hyperbolic metric \ $h$ $\ $with respect to \ $P$ \ $\left[
resp:G\ \right] $ \ and pure or hyperbolic metric $\ g$ $\ $with respect to $%
Q$\ \ $\left[ resp:K\ \right] $ \ we set the following conditions:

$\ \left( \mathbf{I}\right) :\ \ $\ $\ \left( M,\ h,\ P\right) $ \ \textit{%
is a para-Kaehler manifold and }\ $\left( N,\ g,\ Q\right) $ \ \textit{\ is
either a para-Kaehler\ manifold or a locally decomposable product Riemannian
manifold.}

$\left( \mathbf{II}\right) :\ \ $\ $\ \left( M,\ h,\ P\right) $ \ \textit{is
a locally decomposable product Riemannian manifold and} \ $\left( N,\ g,\
Q\right) $ \ \textit{\ is either a para-Kaehler manifold\ or a locally
decomposable product Riemannian manifold.}

$\left( \mathbf{III}\right) :\ \ $\ $\left( M,\ h,\ P\right) $ \ \textit{is
a quasi para-Kaehler manifold and }\ $\left( N,\ g,\ Q\right) $ \ \textit{\
is either quasi para Kaehler\ manifold or a locally decomposable product
Riemannian manifold.}

$\left( \mathbf{IV}\right) :\ $\ $\ \left( M,\ h,\ P\right) $ \ \textit{is a
locally decomposable product Riemannian manifold and} \ $\left( N,\ g,\
Q\right) $ \ \textit{\ is either a quasi para-Kaehler\ manifold or a locally
decomposable product Riemannian manifold.}

\begin{corollary}
$\left( 3.1\right) :$ \ For a map \ $F:\left( M,\ h,\ P\right) \rightarrow
\left( N,\ g,\ Q\right) $
\end{corollary}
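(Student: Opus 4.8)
The plan is to run everything through the second-fundamental-form identity $(3.4)$ of Proposition $(3.2)$: first kill its two correction terms, then take a trace adapted to the eigensplitting of $P$. \emph{Step 1 (each of $(\mathbf{I})$--$(\mathbf{IV})$ forces $S_P=0$ on $M$ and $S_Q=0$ on $N$).} In all four hypotheses $(M,h,P)$ is para-Kaehler, locally decomposable product Riemannian, or quasi para-Kaehler. By Proposition $(2.4)/A$ being para-Kaehler is equivalent to $\nabla P=0$; by Proposition $(2.5)/A$ together with Definition $(2.4)/A(iii)$ being locally decomposable product Riemannian is again equivalent to $\nabla P=0$; and being quasi para-Kaehler means $S_P=0$ by Definition $(2.5)/A(ii)$. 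Since $\nabla P=0$ trivially gives $S_P=0$ (read off $S_P(X,Y)=(\nabla_X P)Y+P(\nabla_{PX}P)Y$), in every case $S_P=0$ on $M$; the same reasoning on $(N,g,Q)$ gives $S_Q=0$ on $N$.

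\emph{Step 2 ($\nabla F_\ast$ is $P$-invariant and its mixed part vanishes).} Substituting $S_P(X,X)=0$ and $S_Q(F_\ast X,F_\ast X)=0$ into $(3.4)$ makes the bracket vanish (the sign $\lambda=\pm1$ is then irrelevant), leaving $(\nabla F_\ast)(PX,PX)=(\nabla F_\ast)(X,X)$ for all $X\in\Gamma(TM)$. Since $\nabla F_\ast$ is symmetric bilinear and $X\mapsto PX$ is linear, polarizing gives $(\nabla F_\ast)(PX,PY)=(\nabla F_\ast)(X,Y)$ for all $X,Y$; evaluated on $X\in\Gamma(\mathcal{E}^{P}_{(1)})$ and $Y\in\Gamma(\mathcal{E}^{P}_{(-1)})$, where $PX=X$ and $PY=-Y$, this forces $(\nabla F_\ast)(X,Y)=-(\nabla F_\ast)(X,Y)$, hence $\nabla F_\ast$ vanishes on $\mathcal{E}^{P}_{(1)}\times\mathcal{E}^{P}_{(-1)}$.

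\emph{Step 3 (trace).} Split $TM=\mathcal{E}^{P}_{(1)}\oplus\mathcal{E}^{P}_{(-1)}$ and pick an adapted frame. If $h$ is hyperbolic with respect to $P$ (the cases in which $M$ is para-Kaehler or quasi para-Kaehler), the eigendistributions are $h$-null of equal rank $m$ (Lemma $(2.2)/A$ and the remarks before it), so I take a Witt-type local frame $\{a_1,\dots,a_m,b_1,\dots,b_m\}$ with $a_i\in\Gamma(\mathcal{E}^{P}_{(1)})$, $b_i\in\Gamma(\mathcal{E}^{P}_{(-1)})$, $h(a_i,a_j)=h(b_i,b_j)=0$, $h(a_i,b_j)=\delta_{ij}$; then the Gram matrix and its inverse are both the off-diagonal identity block, $(3.1)$ collapses to $\mathcal{T}(F)=2\sum_{i=1}^{m}(\nabla F_\ast)(a_i,b_i)$, and this is $0$ by Step 2 — so $F$ is harmonic. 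If instead $h$ is pure, an $h$-orthonormal adapted frame gives $\mathcal{T}(F)=\mathcal{T}_{\mathcal{E}^{P}_{(1)}}(F)+\mathcal{T}_{\mathcal{E}^{P}_{(-1)}}(F)$ with the cross terms already removed by Step 2, so $F$ is harmonic exactly when it is simultaneously plus-eigen and minus-eigen harmonic — matching the introduction's remark that pure-metric paraholomorphy alone is not sufficient for harmonicity.

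\emph{Main obstacle.} There is no analytic content; the care goes into the bookkeeping of Step 1 — matching each named class in $(\mathbf{I})$--$(\mathbf{IV})$ to ``$S=0$'' by invoking exactly the right earlier statement (Propositions $(2.4)$, $(2.5)$, Definition $(2.5)$) and also recording that the anti-paraholomorphic case is already built into Proposition $(3.2)$ — and into keeping the hyperbolic frame (null eigendistributions, Witt basis) cleanly separated from the pure one (orthogonal, definite eigendistributions), since that dichotomy is precisely what distinguishes the honest harmonicity conclusion from the weaker mixed-term-vanishing statement.
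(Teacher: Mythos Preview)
Your Steps 1 and 2 up through the polarization already establish everything Corollary $(3.1)$ actually claims, and the mechanism --- kill the correction terms in Proposition $(3.2)$ --- is exactly the paper's. The only real difference is in part $(i)$: under conditions $(\mathbf{I})$ or $(\mathbf{II})$ the paper notes that $\nabla P=0$ and $\nabla Q=0$, so the full two-variable identity $(3.3)$ collapses immediately; you instead reduce to $S_P=0$, $S_Q=0$, obtain the diagonal identity from $(3.4)$, and then polarize. Your route is slightly longer but has the pleasant side effect of showing that the two-variable conclusion $(\nabla F_\ast)(PX,PY)=(\nabla F_\ast)(X,Y)$ in fact holds under $(\mathbf{III})$ and $(\mathbf{IV})$ as well, which the paper does not state.

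However, you have overshot the target. Corollary $(3.1)$ stops at the $P$-invariance of $\nabla F_\ast$: part $(i)$ is the identity $(\nabla F_\ast)(PX,PY)=(\nabla F_\ast)(X,Y)$ under $(\mathbf{I})/(\mathbf{II})$, and part $(ii)$ is the diagonal version $(3.8)$ under $(\mathbf{III})/(\mathbf{IV})$. There is no trace, no harmonicity, no eigen-harmonicity in this corollary. Your ``mixed part vanishes'' observation and all of Step 3 --- the Witt-frame computation in the hyperbolic case and the eigen-decomposition in the pure case --- belong to the later Theorems $(3.1)$ and $(3.2)$, not here. So the proof of Corollary $(3.1)$ itself is correct and essentially the paper's argument; the rest is extra material that answers a question the corollary does not ask.
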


$i)$ \ \ \textit{let \ }$F$\textit{\ \ be \ }$\pm \left( P,\ Q\right) $-%
\textit{paraholomorphic between manifolds which are holding the condition }$%
\ \left( \mathbf{I}\right) $\textit{\ \ or \ }$\left( \mathbf{II}\right) $%
\textit{\ \ then for every local section \ \ }$X,\ Y\in \Gamma \left(
TM\right) ,$%
\begin{equation*}
\left( \nabla F_{\ast }\right) \left( PX,\ PY\right) =\left( \nabla F_{\ast
}\right) \left( X,\ Y\right) \ 
\end{equation*}

$ii)$\textit{\ \ let \ }$F$\textit{\ \ be \ }$\pm \left( P,\ Q\right) $-%
\textit{paraholomorphic between manifolds which are holding the condition }$%
\ \left( \mathbf{III}\right) $\textit{\ \ or \ }$\left( \mathbf{IV}\right) $%
\textit{\ \ then for every local section \ \ }$X\in \Gamma \left( TM\right)
, $

\begin{equation}
\left( \nabla F_{\ast }\right) \left( PX,\ PX\right) =\left( \nabla F_{\ast
}\right) \left( X,\ X\right)  \tag{$\left( 3.8\right) $}
\end{equation}

\begin{proof}
:
\end{proof}

$i)$ \ Since \ $\overset{N}{\nabla }Q=0$ \ \ and \ \ $\overset{M}{\nabla }%
P=0 $ \ in the case \ $\left( \mathbf{I}\right) $ \ or \ $\left( \mathbf{II}%
\right) ,$ the result follows from Proposition \ $\left( 3.2\right) .\qquad $

$ii)$\textit{\ \ }Since \ $S_{Q}\left( QX\ ^{\prime },X\ ^{\prime }\right)
=0 $ \ \ and \ \ $S_{P}\left( PX\ ,X\right) =0$ \ in the case \ $\left( 
\mathbf{III}\right) $ \ or \ $\left( \mathbf{IV}\right) $, the result
follows from Proposition \ $\left( 3.2\right) .$ \ \ \ \ \ \ $\blacksquare $

\begin{theorem}
$\left( 3.1/A\right) :$ \ Let \ $F:\left( M,\ h,\ P\right) \rightarrow
\left( N,\ g,\ Q\right) $ \ be a \ $\pm \left( P,\ Q\right) $%
-paraholomorphic map from a semi decomposable product Riemannian manifold \ $%
M$ \ into either an \textbf{almost} product Riemannian manifold or an 
\textbf{almost }para-Hermitian manifold \ $N$ \ with Vidal
eigendistributions \ $\overset{N}{\mathcal{E}}_{\left( 1\right) }$ \ and \ \ 
$\overset{N}{\mathcal{E}}_{\left( -1\right) }$ of $\ Q.$ \ Then the
following statements are equivalent:
\end{theorem}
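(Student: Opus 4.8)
The plan is to compute the tension field $\mathcal{T}(F)$ by splitting it along the eigendistributions of $P$ on $M$ and to show that the two resulting summands are sections of \emph{complementary} subbundles of $TN$; then $\mathcal{T}(F)$ vanishes precisely when each summand vanishes, which will be the content of the equivalence.

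First I would use that $h$, being pure with respect to $P$, is Riemannian and that $TM=\overset{M}{\mathcal{E}}_{(1)}\oplus\overset{M}{\mathcal{E}}_{(-1)}$ is $h$-orthogonal. Choosing local $h$-orthonormal frames $\{e_{1},\dots ,e_{r}\}$ for $\overset{M}{\mathcal{E}}_{(1)}$ and $\{e_{r+1},\dots ,e_{n}\}$ for $\overset{M}{\mathcal{E}}_{(-1)}$ and using formula $(3.2)$ for a $D$-tension field (here $h^{ii}=1$), one gets immediately
\[
\mathcal{T}(F)=\mathcal{T}_{\overset{M}{\mathcal{E}}_{(1)}}(F)+\mathcal{T}_{\overset{M}{\mathcal{E}}_{(-1)}}(F).
\]

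Next I would establish that $\mathcal{T}_{\overset{M}{\mathcal{E}}_{(1)}}(F)\in\Gamma\!\left(F^{-1}\overset{N}{\mathcal{E}}_{(\varepsilon)}\right)$ and $\mathcal{T}_{\overset{M}{\mathcal{E}}_{(-1)}}(F)\in\Gamma\!\left(F^{-1}\overset{N}{\mathcal{E}}_{(-\varepsilon)}\right)$, where $\varepsilon=1$ if $F$ is $(P,Q)$-paraholomorphic and $\varepsilon=-1$ if $F$ is $(P,Q)$-anti-paraholomorphic; it is enough to treat $\mathcal{T}_{\overset{M}{\mathcal{E}}_{(1)}}(F)$ in the paraholomorphic case, the others being identical. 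Write $\mathcal{T}_{\overset{M}{\mathcal{E}}_{(1)}}(F)=\sum_{i=1}^{r}\overset{N}{\nabla}_{F_{\ast }e_{i}}(F_{\ast }e_{i})-F_{\ast }\!\left(\sum_{i=1}^{r}\overset{M}{\nabla}_{e_{i}}e_{i}\right)$. For the second piece, $Q\circ F_{\ast }=F_{\ast }\circ P$ gives $F_{\ast }e_{i}\in\Gamma(F^{-1}\overset{N}{\mathcal{E}}_{(1)})$, while $\func{div}P=0$ makes $\overset{M}{\mathcal{E}}_{(1)}$ minimal by Remark $(3.1)$, so $\sum_{i}\overset{M}{\nabla}_{e_{i}}e_{i}\in\Gamma(\overset{M}{\mathcal{E}}_{(1)})$ and its $F_{\ast }$-image lies in $F^{-1}\overset{N}{\mathcal{E}}_{(1)}$. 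For the first piece, note that $(V,W)\mapsto\tfrac{1}{2}(I-Q)\bigl(\overset{N}{\nabla}_{V}W\bigr)$, for $V,W\in\Gamma(\overset{N}{\mathcal{E}}_{(1)})$, is $C^{\infty}(N)$-bilinear (the $(Vf)W$-term lands in $\overset{N}{\mathcal{E}}_{(1)}$), hence is a genuine tensor field $\mathrm{II}$ on $N$ that pulls back along $F$; the Vidal hypothesis $\overset{N}{\nabla}_{V}V\in\overset{N}{\mathcal{E}}_{(1)}$ says exactly $\mathrm{II}(V,V)=0$, so $\sum_{i}\mathrm{II}(F_{\ast }e_{i},F_{\ast }e_{i})=0$, i.e.\ $\sum_{i}\overset{N}{\nabla}_{F_{\ast }e_{i}}(F_{\ast }e_{i})\in\Gamma(F^{-1}\overset{N}{\mathcal{E}}_{(1)})$. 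This proves the claim for $\overset{M}{\mathcal{E}}_{(1)}$, and the statement for $\overset{M}{\mathcal{E}}_{(-1)}$ follows word for word using that $\overset{N}{\mathcal{E}}_{(-1)}$ is Vidal.

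Finally, since $Q$ is a product structure, $TN=\overset{N}{\mathcal{E}}_{(1)}\oplus\overset{N}{\mathcal{E}}_{(-1)}$ is an internal direct sum regardless of whether $g$ is pure or hyperbolic, so a tangent vector of $N$ is zero iff both of its components are zero. Applied to $\mathcal{T}(F)=\mathcal{T}_{\overset{M}{\mathcal{E}}_{(1)}}(F)+\mathcal{T}_{\overset{M}{\mathcal{E}}_{(-1)}}(F)$, whose summands sit in the complementary bundles $F^{-1}\overset{N}{\mathcal{E}}_{(\varepsilon)}$ and $F^{-1}\overset{N}{\mathcal{E}}_{(-\varepsilon)}$, this shows that $F$ is harmonic if and only if $\mathcal{T}_{\overset{M}{\mathcal{E}}_{(1)}}(F)=0$ and $\mathcal{T}_{\overset{M}{\mathcal{E}}_{(-1)}}(F)=0$, that is, if and only if $F$ is at once plus-eigen harmonic and minus-eigen harmonic; read in both directions this is precisely the asserted equivalence (and any reformulation of it in the statement). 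I expect the only genuinely delicate step to be the middle one — recognising that the $\overset{N}{\mathcal{E}}_{(-1)}$-valued map $\mathrm{II}$ attached to the Vidal distribution $\overset{N}{\mathcal{E}}_{(1)}$ is tensorial, hence survives pullback along $F$ and annihilates the diagonal sum $\sum_{i}\mathrm{II}(F_{\ast }e_{i},F_{\ast }e_{i})$; the rest is bookkeeping with the orthonormal frame and with the definitions of paraholomorphic maps, Vidal distributions and semi decomposable product manifolds recalled above.
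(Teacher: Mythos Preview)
Your argument is correct and follows essentially the same route as the paper: split $\mathcal{T}(F)$ along the $h$-orthogonal eigendistributions of $P$, use $\func{div}P=0$ (minimality) and $\pm$-paraholomorphicity to land the $F_{\ast}(\sum\nabla^{M}_{e_i}e_i)$ piece in the right eigendistribution of $Q$, use the Vidal hypothesis on $N$ for the $\sum\nabla^{N}_{F_{\ast}e_i}(F_{\ast}e_i)$ piece, and conclude from $TN=\overset{N}{\mathcal{E}}_{(1)}\oplus\overset{N}{\mathcal{E}}_{(-1)}$. Your treatment is in fact slightly more careful than the paper's, which applies the Vidal condition directly to the pulled-back sections $u_i'=F_{\ast}u_i$ without spelling out the tensoriality of the transverse second fundamental form $\mathrm{II}$ that you introduce.
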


$i)$ \ \ $F$ \ \textit{\ is harmonic}

$ii)$ \ $F$ \ \textit{\ is\ plus}-\textit{eigen} \textit{harmonic and\
minus-eigen} \textit{harmonic}

\begin{proof}
:
\end{proof}

$(ii)\Rightarrow (i):$ \ This is obvious. \ 

$(i)\Rightarrow (ii):$ \ Let \ $\left\{ u_{_{1}},...,u_{s}\right\} $ \ \ and
\ \ $\left\{ v_{_{1}},...,v_{t}\right\} $ \ be local orthonormal frame
fields for \ \textit{\ }$\overset{M}{\mathcal{E}}_{\left( 1\right) }$ \ 
\textit{and \ }$\overset{M}{\mathcal{E}}_{\left( -1\right) }$ \
respectively. Then observe \ that

$a^{\circ })$ \ \ Since \ $\func{div}P=0$ \ and therefore \ \textit{\ }$%
\overset{M}{\mathcal{E}}_{\left( 1\right) }$ \ and\textit{\ \ }$\overset{M}{%
\mathcal{E}}_{\left( -1\right) }$ \ are both minimal distributions,%
\begin{equation*}
u=\overset{s}{\underset{i=1}{\dsum \ }}\overset{M}{\nabla }_{u_{i}}u_{i}\in
\Gamma \left( \overset{M}{\mathcal{E}}_{\left( 1\right) }\right) \text{ \ \
\ \ \ \ and \ \ \ \ \ \ \ }v=\overset{t}{\underset{i=1}{\sum \ }}\overset{M}{%
\nabla }_{v_{i}}v_{i}\in \Gamma \left( \overset{M}{\mathcal{E}}_{\left(
-1\right) }\right)
\end{equation*}

$b^{\circ })\ \ \ $Since $\ F\ $\ is$\ \ \pm \left( P,\ Q\right) $%
-paraholomorphic, $\ (a^{\circ })$ \ \ gives us$\ $%
\begin{equation*}
F_{\ast }\left( u\right) \text{ \ \ and \ }\ u_{i}^{\prime }=F_{\ast }\left(
u_{i}\right) \in \Gamma \left( \overset{N}{\mathcal{E}}_{\left( c\right)
}\right) ,\ \ \forall \ i=1,...,s
\end{equation*}%
\ \ and \ 
\begin{equation*}
F_{\ast }\left( v\right) ,\ \ v_{i}^{\prime }=F_{\ast }\left( v_{i}\right)
\in \Gamma \left( \overset{N}{\mathcal{E}}_{\left( -c\right) }\right) ,\ \
\forall \ i=1,...,t\ \ \ 
\end{equation*}

$c^{\circ })$ \ \ Since \ $N$\ \ is Vidal, \ $(b^{\circ })$ \ \ gives us%
\begin{equation*}
\overset{N}{\nabla }_{u\ _{i}^{\prime }}u\ _{i}^{\prime }\in \Gamma \left( 
\overset{N}{\mathcal{E}}_{\left( c\right) }\right) ,\text{\ }\ \ \forall \
i=1,...,s\text{\ \ \ \ \ and \ }\overset{N}{\ \ \ \nabla _{v\ _{i}^{\prime }}%
}v_{i}^{\prime }\in \Gamma \left( \overset{N}{\mathcal{E}}_{\left( -c\right)
}\right) ,\ \ \forall \ i=1,...,t\ 
\end{equation*}%
and therefore%
\begin{equation*}
\overset{s}{\underset{i=1}{\dsum \ }}\overset{N}{\nabla }_{u\ _{i}^{\prime
}}u\ _{i}^{\prime }\in \Gamma \left( \overset{N}{\mathcal{E}}_{\left(
c\right) }\right) \text{ \ \ \ \ and \ \ }\overset{t}{\underset{i=1}{\sum \ }%
}\overset{N}{\nabla }_{v\ _{i}^{\prime }}v_{i}^{\prime }\in \Gamma \left( 
\overset{N}{\mathcal{E}}_{\left( -c\right) }\right)
\end{equation*}%
So, from $\ \left( c^{\circ }\right) ,$ \ we have \ 
\begin{equation*}
\mathcal{T}_{\left( \overset{M}{\mathcal{E}}_{\left( 1\right) }\right) }%
\mathcal{(}F\mathcal{)=}\overset{s}{\underset{i=1}{\sum }}\left( \nabla
F_{\ast }\right) \left( u_{i},\ u_{i}\right) =\underset{i=1}{\overset{s}{%
\dsum }}\left[ \overset{N}{\nabla }_{u\ _{i}^{\prime }}u\ _{i}^{\prime
}-F_{\ast }\left( \overset{M}{\nabla }_{u_{i}}u_{i}\right) \right] \in
\Gamma \left( \overset{N}{\mathcal{E}}_{\left( c\right) }\right)
\end{equation*}%
and%
\begin{equation*}
\mathcal{T}_{\left( \overset{M}{\mathcal{E}}_{\left( -1\right) }\right) }%
\mathcal{(}F\mathcal{)=}\underset{i=1}{\overset{t}{\sum }}\left( \nabla
F_{\ast }\right) \left( v_{i},\ v_{i}\right) =\underset{i=1}{\overset{t}{%
\dsum }}\left[ \overset{N}{\nabla }_{v\ _{i}^{\prime }}v\ _{i}^{\prime
}-F_{\ast }\left( \overset{M}{\nabla }_{v_{i}}v_{i}\right) \right] \in
\Gamma \left( \overset{N}{\mathcal{E}}_{\left( -c\right) }\right)
\end{equation*}%
Where \ \ $c=1,$ \ when \ $F$ \ is \ $\left( P,\ Q\right) $-paraholomorphic
and \ $c=-1,$ \ when \ $F$ \ is \ $\left( P,\ Q\right) $%
-anti-paraholomorphic. But then, since \ \ \textit{\ }$\overset{N}{\mathcal{E%
}}_{\left( 1\right) }\cap \ \overset{N}{\mathcal{E}}_{\left( -1\right)
}=\left\{ 0\right\} ,$ \ \ we have that \ $\mathcal{T}_{\left( \overset{M}{%
\mathcal{E}}_{\left( 1\right) }\right) }\mathcal{(}F\mathcal{)}$ \ and \ $%
\mathcal{T}_{\left( \overset{M}{\mathcal{E}}_{\left( -1\right) }\right) }%
\mathcal{(}F\mathcal{)}$ \ are linearly independent. \ So, by the facts that 
$\ \ \ $\ $\left\{ u_{_{1}},...,u_{s};\ v_{_{1}},...,v_{t}\right\} $ \ is a
local orthonormal frame fields for $TM$ \ \ \ and \ \ 
\begin{equation*}
\mathcal{T(}F\mathcal{)=T}_{\left( \overset{M}{\mathcal{E}}_{\left( 1\right)
}\right) }\mathcal{(}F\mathcal{)}+\mathcal{T}_{\left( \overset{M}{\mathcal{E}%
}_{\left( -1\right) }\right) }\mathcal{(}F\mathcal{)},
\end{equation*}%
\ we have \ \ \ $\mathcal{T}_{\left( \overset{M}{\mathcal{E}}_{\left(
1\right) }\right) }\mathcal{(}F\mathcal{)}=0=\mathcal{T}_{\left( \overset{M}{%
\mathcal{E}}_{\left( -1\right) }\right) }\mathcal{(}F\mathcal{)}$\ \ by\ the
assumption that \ $\mathcal{T(}F\mathcal{)}=0$\ \ \ \ \ \ \ $\blacksquare $

\begin{corollary}
$\left( 3.2/A\right) :$ \ \ Let \ $F:\left( M,\ h,\ P\right) \rightarrow
\left( N,\ g,\ Q\right) $ \ be a \ $\pm \left( P,\ Q\right) $%
-paraholomorphic map from a locally decomposable product Riemannian manifold
\ $M$ \ \ into either a locally decomposable product Riemannian manifold or
nearly para-Kaehler (in particular, para-Kaehler) manifold \ $N.$ \ Then the
following statements are equivalent:
\end{corollary}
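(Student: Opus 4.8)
The plan is to deduce the corollary directly from Theorem $(3.1/A)$ once its hypotheses are checked in the present situation. First I would note that, $M$ being a locally decomposable product Riemannian manifold, $\overset{M}{\nabla}P=0$; hence $\func{div}P=\sum_{i}h_{ii}\big(\overset{M}{\nabla}_{e_{i}}P\big)e_{i}=0$, so $M$ is in particular a semi decomposable product Riemannian manifold --- precisely the assumption placed on the domain in Theorem $(3.1/A)$.

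Next I would verify that the eigendistributions $\overset{N}{\mathcal{E}}_{(1)}$ and $\overset{N}{\mathcal{E}}_{(-1)}$ of $Q$ are Vidal. If $N$ is locally decomposable product Riemannian, then $\overset{N}{\nabla}Q=0$, and Remark $(3.1)$ already records that in this case both eigendistributions are Vidal (indeed minimal). If instead $N$ is nearly para-Kaehler, then for $X\in\Gamma\big(\overset{N}{\mathcal{E}}_{(\varepsilon)}\big)$ with $\varepsilon=\pm1$ we have $QX=\varepsilon X$, so the defining identity $\big(\overset{N}{\nabla}_{X}Q\big)X=0$ reads $\varepsilon\,\overset{N}{\nabla}_{X}X-Q\big(\overset{N}{\nabla}_{X}X\big)=0$, which forces $Q\big(\overset{N}{\nabla}_{X}X\big)=\varepsilon\,\overset{N}{\nabla}_{X}X$, i.e. $\overset{N}{\nabla}_{X}X\in\Gamma\big(\overset{N}{\mathcal{E}}_{(\varepsilon)}\big)$; hence both eigendistributions are Vidal in this case too. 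Also $N$, being locally decomposable product Riemannian (thus almost product Riemannian) or nearly para-Kaehler (thus almost para-Hermitian, by Definition $(2.5)$), is of one of the two admissible types for the target in Theorem $(3.1/A)$.

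With the domain semi decomposable and the $Q$-eigendistributions of $N$ Vidal, all hypotheses of Theorem $(3.1/A)$ are in force, and the asserted equivalence between harmonicity of $F$ and the simultaneous plus-eigen and minus-eigen harmonicity of $F$ follows at once. Should the corollary also record the pointwise identity $\big(\nabla F_{\ast}\big)(PX,PX)=\big(\nabla F_{\ast}\big)(X,X)$, I would get it from Corollary $(3.1)$ and equation $(3.8)$: $\overset{M}{\nabla}P=0$ gives $S_{P}=0$, while a locally decomposable or a nearly para-Kaehler $N$ satisfies $S_{Q}=0$ --- for the latter, $S_{Q}(X,X)=Q\big(\overset{N}{\nabla}_{QX}Q\big)X=0$ after polarizing the nearly para-Kaehler condition and using $Q\circ\big(\overset{N}{\nabla}Q\big)=-\big(\overset{N}{\nabla}Q\big)\circ Q$, whence $S_{Q}=0$ by Lemma $(2.8)$ --- so equation $(3.4)$ of Proposition $(3.2)$ collapses to $(3.8)$.

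The whole argument is bookkeeping around citations (Theorem $(3.1/A)$, Remark $(3.1)$, Lemma $(2.8)$, Proposition $(3.2)$) together with the definitions; the one point that needs a genuine --- if one-line --- computation is that a nearly para-Kaehler target has Vidal, not merely critical, eigendistributions, namely the parallelism argument displayed above. I therefore anticipate no real obstacle.
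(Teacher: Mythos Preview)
Your proposal is correct and follows essentially the same route as the paper: verify that a locally decomposable domain is semi decomposable (via $\nabla P=0\Rightarrow\func{div}P=0$), verify that the target's eigendistributions are Vidal, and invoke Theorem $(3.1/A)$. The paper compresses both verifications into a citation of Remark $(3.1)$, whereas you spell out the one-line computation showing that the nearly para-Kaehler condition $(\overset{N}{\nabla}_{X}Q)X=0$ forces $\overset{N}{\nabla}_{X}X\in\overset{N}{\mathcal{E}}_{(\varepsilon)}$ for $X\in\overset{N}{\mathcal{E}}_{(\varepsilon)}$; this is arguably an improvement, since Remark $(3.1)$ as stated does not explicitly cover the nearly para-Kaehler case. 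Your final paragraph on equation $(3.8)$ and the vanishing of $S_{Q}$ is sound but extraneous to the corollary as stated.
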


$i)$ \ \ $F$ \ \textit{\ is harmonic.}

$ii)$ \ $F$ \ \textit{\ is plus-eigen harmonic and\ minus-eigen harmonic.}

\begin{proof}
: By Remark $\left( 3.1\right) $ \ one gets that
\end{proof}

$a^{\circ })$\ \ for every locally decomposable product Riemannian manifold
\ $M,$ \ the eigendistributions \ $\overset{M}{\mathcal{E}}_{\left( 1\right)
}$ \ and \ \ $\overset{M}{\mathcal{E}}_{\left( -1\right) }$are both minimal.

$b^{\circ })$ \ \ for every nearly para-Kaehler manifold \ $N,$ \ the
eigendistributions \ $\overset{N}{\mathcal{E}}_{\left( 1\right) }$ \ and \ \ 
$\overset{N}{\mathcal{E}}_{\left( -1\right) }$ \ are also both Vidal.

So the equivalence of \ $\left( i\right) $ \ and \ $\left( ii\right) $ \ \
follows from the observations \ $\left( a^{\circ }\right) ,$ \ \ $\left(
b^{\circ }\right) $ \ and Theorem \ $\left( 2.1/A\right) .$ \ \ \ \ \ \ \ \
\ \ \ \ \ \ $\blacksquare $

\begin{theorem}
$\left( 3.1/B\right) :$ \ Let \ $F:\left( M,\ h,\ G\right) \rightarrow
\left( N,\ g,\ K\right) $ \ be a \ $\pm \left( G,\ K\right) $-golden map
from a semi decomposable golden Riemannian manifold \ $M$ \ into either an 
\textbf{almost} golden Riemannian manifold or an \textbf{almost}
golden-Hermitian manifold \ $N$ \ with Vidal eigendistributions \ $\overset{N%
}{\mathcal{E}}_{\left( \sigma \right) }$ \ and \ \ $\overset{N}{\mathcal{E}}%
_{\left( \bar{\sigma}\right) }$ of $\ K.$\ Then the following statements are
equivalent:
\end{theorem}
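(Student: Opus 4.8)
The plan is to deduce Theorem $\left(3.1/B\right)$ from Theorem $\left(3.1/A\right)$ by transporting every hypothesis to the twin almost product structures. Write $\mathcal{R}=P_{G}$ on $M$ and $\mathcal{S}=P_{K}$ on $N$, so that $\left\{ \mathcal{R},\ G\right\} $ and $\left\{ \mathcal{S},\ K\right\} $ are twin pairs. By Proposition $\left(2.1\right)$, $g$ is pure (resp.\ hyperbolic) with respect to $K$ iff it is so with respect to $\mathcal{S}$, and likewise $h$ with respect to $G$ and $\mathcal{R}$; by Proposition $\left(2.2\right)$, $\left(N,\ g,\ K\right)$ is an almost golden Riemannian manifold (resp.\ an almost golden-Hermitian manifold) iff $\left(N,\ g,\ \mathcal{S}\right)$ is an almost product Riemannian manifold (resp.\ an almost para-Hermitian manifold), and similarly on $M$. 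Thus the ambient hypotheses on $M$ and $N$ translate verbatim into the product language.

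Next I would assemble the dictionary between the two pictures. From the eigendistribution identifications recorded in Section 2, $\overset{M}{\mathcal{E}}_{\left(\sigma\right)}^{G}=\overset{M}{\mathcal{E}}_{\left(1\right)}^{\mathcal{R}}$ and $\overset{M}{\mathcal{E}}_{\left(\bar{\sigma}\right)}^{G}=\overset{M}{\mathcal{E}}_{\left(-1\right)}^{\mathcal{R}}$ on $M$, and likewise $\overset{N}{\mathcal{E}}_{\left(\sigma\right)}^{K}=\overset{N}{\mathcal{E}}_{\left(1\right)}^{\mathcal{S}}$, $\overset{N}{\mathcal{E}}_{\left(\bar{\sigma}\right)}^{K}=\overset{N}{\mathcal{E}}_{\left(-1\right)}^{\mathcal{S}}$ on $N$; hence the hypothesis that $\overset{N}{\mathcal{E}}_{\left(\sigma\right)}$ and $\overset{N}{\mathcal{E}}_{\left(\bar{\sigma}\right)}$ of $K$ are Vidal is exactly the statement that $\overset{N}{\mathcal{E}}_{\left(1\right)}$ and $\overset{N}{\mathcal{E}}_{\left(-1\right)}$ of $\mathcal{S}$ are Vidal. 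By part $2)$ of Remark $\left(3.1\right)$, $\func{div}G=0$ iff $\func{div}\mathcal{R}=0$, so $M$ is a semi decomposable golden Riemannian manifold iff $\left(M,\ h,\ \mathcal{R}\right)$ is a semi decomposable product Riemannian manifold. By Proposition $\left(3.1\right)$ applied to $\left\{ \mathcal{R},\ G\right\} $ and $\left\{ \mathcal{S},\ K\right\} $, $F$ is $\pm\left(G,\ K\right)$-golden iff $F$ is $\pm\left(\mathcal{R},\ \mathcal{S}\right)$-paraholomorphic, with ``golden'' matching ``paraholomorphic'' and ``antigolden'' matching ``anti-paraholomorphic''. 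Finally, harmonicity, plus-eigen harmonicity ($\overset{M}{\mathcal{E}}_{\left(\sigma\right)}$-harmonicity, which equals $\overset{M}{\mathcal{E}}_{\left(1\right)}$-harmonicity) and minus-eigen harmonicity are the same in both pictures, since the tension fields $\mathcal{T}_{D}\left(F\right)$ depend only on $h$, $g$ and on the distribution $D$, and the relevant distributions coincide.

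With this dictionary in place, Theorem $\left(3.1/A\right)$ applied to $F:\left(M,\ h,\ \mathcal{R}\right)\rightarrow\left(N,\ g,\ \mathcal{S}\right)$ — a $\pm\left(\mathcal{R},\ \mathcal{S}\right)$-paraholomorphic map from a semi decomposable product Riemannian manifold into an almost product Riemannian (or almost para-Hermitian) manifold whose $\mathcal{S}$-eigendistributions are Vidal — yields that $F$ is harmonic iff $F$ is plus-eigen harmonic and minus-eigen harmonic; translating back gives the claim. The argument is essentially bookkeeping, so the only real point to watch is that every dictionary entry holds with its ``resp.'' parallelism intact: that the $\pm$ in $\pm\left(G,\ K\right)$-golden is faithfully matched with the $\pm$ in $\pm\left(\mathcal{R},\ \mathcal{S}\right)$-paraholomorphic via Proposition $\left(3.1\right)$, and that the plus/minus labelling of the eigen-harmonicities is not interchanged, i.e.\ that one uses $\mathcal{E}_{\left(\sigma\right)}^{G}=\mathcal{E}_{\left(1\right)}^{\mathcal{R}}$ rather than the conjugate identification $\mathcal{E}_{\left(\sigma\right)}^{G}=\mathcal{E}_{\left(-1\right)}^{\widehat{\mathcal{R}}}$. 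One could instead reprove Theorem $\left(3.1/B\right)$ directly, imitating the proof of Theorem $\left(3.1/A\right)$ with Lemma $\left(2.7\right)/\left(B\right)$ and a golden analogue of Proposition $\left(3.2\right)$ in place of their product counterparts, but the twin reduction above is considerably more economical.
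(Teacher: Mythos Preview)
Your proposal is correct and follows essentially the same strategy as the paper: pass to the twin almost product structures $\mathcal{R}=P_{G}$, $\mathcal{S}=P_{K}$, check that every hypothesis (semi decomposable, Vidal eigendistributions, $\pm$-golden) translates to the corresponding product hypothesis, and invoke Theorem $\left(3.1/A\right)$. The paper's own proof is a terser version of exactly this reduction, citing only Proposition $\left(3.1\right)$ and the Section~2 correspondences rather than spelling out each dictionary entry as you do.
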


$i)$ \ \ $F$ \ \textit{\ is harmonic.}

$ii)$ \ $F$ \ \textit{\ is plus}-\textit{eigen} \textit{harmonic and \ minus}%
-\textit{eigen} \textit{harmonic.}

\begin{proof}
: Let \ $\ P_{G}$ \ and \ $Q_{K}$ \ denote the twin product structures of \ $%
G$ \ and \ $K$ \ \ respectively. Then by Lemma \ $\left( 2.1\right) $ \ and
Proposition $\left( 3.1\right) $ \ the hypothesis of this theorem becomes
equivalent to the hypothesis of Theorem $\left( 3.1/A\right) $, namely:
\end{proof}

"Let \ $F:\left( M,\ h,\ P_{G}\right) \rightarrow \left( N,\ g,\
Q_{K}\right) $ \ be a \ $\pm \left( P_{G},\ Q_{K}\right) $-paraholomorphic
map from a semi decomposable\ product Riemannian manifold \ $\left( M,\ h,\
P_{G}\right) $ \ into either an \textbf{almost} product Riemannian manifold
or an \textbf{almost }para-Hermitian manifold \ $\left( N,\ g,\ Q_{K}\right) 
$ \ with Vidal eigendistributions \ $\overset{N}{\mathcal{E}}_{\left(
1\right) }$ \ and \ \ $\overset{N}{\mathcal{E}}_{\left( -1\right) }$ of $\
Q_{K.}"$ \ 

So the required conclusion of the theorem follows from Theorem $\left(
3.1/A\right) $. $\ \ \ \blacksquare $

From \ Theorem $\ \left( 3.1/B\right) $ \ we immediately get

\begin{corollary}
$\left( 3.2/B\right) :$ \ Let \ $F:\left( M,\ h,\ G\right) \rightarrow
\left( N,\ g,\ K\right) $ \ be a \ $\pm \left( G,\ K\right) $-golden map
from a locally decomposable golden Riemannian manifold \ $M$ \ \ into either
a locally decomposable golden Riemannian manifold or nearly golden-Kaehler
(in particular, golden-Kaehler) manifold \ $N.$ \ Then the following
statements are equivalent:
\end{corollary}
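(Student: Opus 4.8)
The plan is to establish this exactly as Corollary $\left( 3.2/A\right) $ was obtained: check that the hypotheses of Theorem $\left( 3.1/B\right) $ hold and then invoke it. Equivalently, one could pass to the $G$- and $K$-associated twin product structures $P_{G}$, $Q_{K}$, use Proposition $\left( 3.1\right) $ to rewrite the map as $\pm \left( P_{G},\ Q_{K}\right) $-paraholomorphic, and apply Corollary $\left( 3.2/A\right) $ directly; I will follow the first route.

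First I would treat the source $M$. Being locally decomposable golden Riemannian, $G$ is integrable and the condition $G\left( \ast \right) $ holds, so Proposition $\left( 2.5\right) $ gives $\nabla G=0$; hence $\func{div}G=\sum_{i}h_{ii}\left( \nabla _{e_{i}}G\right) e_{i}=0$, so $M$ is in particular semi decomposable golden Riemannian, and by Remark $\left( 3.1\right) $ the eigendistributions $\overset{M}{\mathcal{E}}_{\left( \sigma \right) }$ and $\overset{M}{\mathcal{E}}_{\left( \bar{\sigma}\right) }$ of $G$ are Vidal, hence minimal. Next I would treat the target $N$, checking that it is an almost golden Riemannian or almost golden-Hermitian manifold whose eigendistributions of $K$ are Vidal. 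If $N$ is locally decomposable golden Riemannian, or more generally golden-Kaehler, then $\nabla K=0$, so $\overset{N}{\mathcal{E}}_{\left( \sigma \right) }$ and $\overset{N}{\mathcal{E}}_{\left( \bar{\sigma}\right) }$ are parallel, hence Vidal. If $N$ is merely nearly golden-Kaehler, then $\left( \nabla _{X}K\right) X=0$ for all $X$; restricting this to $X\in \Gamma \left( \overset{N}{\mathcal{E}}_{\left( \sigma \right) }\right) $, where $KX=\sigma X$, gives $\nabla _{X}X\in \Gamma \left( \overset{N}{\mathcal{E}}_{\left( \sigma \right) }\right) $, and likewise for $\overset{N}{\mathcal{E}}_{\left( \bar{\sigma}\right) }$, so both eigendistributions of $K$ are Vidal. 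With these facts in hand, Theorem $\left( 3.1/B\right) $ applies directly and yields the equivalence of $\left( i\right) $, ``$F$ is harmonic'', and $\left( ii\right) $, ``$F$ is plus-eigen harmonic and minus-eigen harmonic''.

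I do not anticipate a real obstacle. The two points that want a line of care are the observation that the semi-decomposability hypothesis of Theorem $\left( 3.1/B\right) $ is automatic here, since $\nabla G=0$ already forces both eigendistributions of $G$ to be minimal, and the short computation with the eigenvalue relations $KX=\sigma X$ and $KX=\bar{\sigma}X$ that produces the Vidal property in the nearly golden-Kaehler case. Everything else is bookkeeping: harmonicity of $F$ depends only on the metrics $h$ and $g$, while plus- and minus-eigen harmonicity depend only on the eigendistributions $\overset{M}{\mathcal{E}}_{\left( \sigma \right) }$ and $\overset{M}{\mathcal{E}}_{\left( \bar{\sigma}\right) }$, so the conclusion is unaffected by the reduction.
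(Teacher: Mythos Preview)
Your proposal is correct and follows essentially the same route as the paper: the paper simply says that Corollary $\left(3.2/B\right)$ follows immediately from Theorem $\left(3.1/B\right)$, and you have filled in precisely the verifications needed (semi-decomposability of $M$ from $\nabla G=0$, and the Vidal property of the eigendistributions of $K$ on $N$) to justify that invocation.
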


$i)$ \ \ $F$ \ \textit{\ is harmonic.}

$ii)$ \ $F$ \ \textit{\ is plus-eigen harmonic and\ minus-eigen harmonic.\ }

\begin{proposition}
$\left( 3.3\right) :\ $\ \ \ Let \ $F:\left( M,\ h,\ P\right) \rightarrow
\left( N,\ g,\ Q\right) $ \ be a \ $\pm \left( P,\ Q\right) $%
-paraholomorphic map from an almost para-Hermitian manifold $\ \ \left( M,\
h,\ P\right) \ \ $into an almost para-Hermitian manifold or an almost
product Riemannian manifold \ $\left( N,\ g,\ Q\right) $ .Then the tension
field \ \ $\mathcal{T\ }\left( F\right) $ \ \ of \ $F$ \ \ takes the form%
\begin{equation*}
\begin{tabular}{ll}
$\mathcal{T\ }\left( F\right) $ & $=\dsum\limits_{i=1}^{m}\left\{ h\left(
e_{i},\ e_{i}\right) \left( \nabla F_{\ast }\right) \left( e_{i},\
e_{i}\right) +h\left( Pe_{i},\ Pe_{i}\right) \left( \nabla F_{\ast }\right)
\left( Pe_{i},\ Pe_{i}\right) \right\} $ \\ 
& $=-Q\left\{ \overset{m}{\underset{i=1}{\sum }}h_{ii}S_{Q}\left(
e_{i}^{\prime },\ e_{i}^{\prime }\right) -\lambda F_{\ast }\left( \func{div}%
\left( P\right) \right) \right\} $%
\end{tabular}%
\end{equation*}%
\ where \ \ $\left\{ e_{_{1}},...,e_{m},Pe_{_{1}},...,Pe_{m}\right\} $ \ \
is a local orthonormal frame field for \ $TM$ \ and \ $\lambda =1$ \ when \ $%
F$ \ is $\ \left( P,\ Q\right) $-paraholomorphic, \ $\lambda =-1$ \ when \ $%
F $ \ is $\ \left( P,\ Q\right) $-anti-paraholomorphic and \ $h_{ii}=h\left(
e_{i},\ e_{i}\right) ,$ \ $e_{i}^{\prime }=F_{\ast }\left( e_{i}\right) .$
\end{proposition}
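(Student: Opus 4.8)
The plan is to evaluate the tension field of $F$ in a $P$-adapted local orthonormal frame and then feed each of its diagonal terms into equation $(3.4)$ of Proposition $(3.2)$. First I would produce the frame. Since $(M,\ h,\ P)$ is almost para-Hermitian, Lemma $(2.2/A)$ gives that $h$ has signature $(m,\ m)$ and that $\mathcal{E}_{(1)}^{P}$ and $\mathcal{E}_{(-1)}^{P}$ each have rank $m$, while (as noted before Lemma $(2.2/A)$) $h$ is null on each of these eigendistributions. Nondegeneracy of $h$ then forces the pairing $\mathcal{E}_{(1)}^{P}\times \mathcal{E}_{(-1)}^{P}\to \mathbb{R}$, $(X,\ Y)\mapsto h(X,\ Y)$, to be nondegenerate, so locally one may choose frames $\{u_{1},\dots ,u_{m}\}$ of $\mathcal{E}_{(1)}^{P}$ and $\{w_{1},\dots ,w_{m}\}$ of $\mathcal{E}_{(-1)}^{P}$ with $h(u_{i},\ w_{j})=\delta _{ij}$. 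Setting $e_{i}=\frac{1}{\sqrt{2}}(u_{i}+w_{i})$, so that $Pe_{i}=\frac{1}{\sqrt{2}}(u_{i}-w_{i})$, a short computation using that the eigendistributions are $h$-null yields $h(e_{i},\ e_{j})=\delta _{ij}$, $h(Pe_{i},\ Pe_{j})=-\delta _{ij}$ and $h(e_{i},\ Pe_{j})=0$; hence $\{e_{1},\dots ,e_{m},\ Pe_{1},\dots ,Pe_{m}\}$ is a local orthonormal frame for $TM$ of the kind appearing in the statement, and in general $h(Pe_{i},\ Pe_{i})=-h(e_{i},\ e_{i})$ (this last relation is just hyperbolic compatibility: $h(PX,\ PX)=h(X,\ \widehat{P}PX)=-h(X,\ X)$).

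Writing $h_{ii}=h(e_{i},\ e_{i})$ and applying $(3.2)$ to this orthonormal frame (so that $h^{jj}=h_{jj}$), we get $\mathcal{T}(F)=\sum _{i=1}^{m}\{h_{ii}(\nabla F_{\ast })(e_{i},\ e_{i})+h(Pe_{i},\ Pe_{i})(\nabla F_{\ast })(Pe_{i},\ Pe_{i})\}$, which is the first displayed equality. Now I would substitute $(3.4)$ with $X=e_{i}$: using $h(Pe_{i},\ Pe_{i})=-h_{ii}$, the $i$-th summand equals $h_{ii}\big[(\nabla F_{\ast })(e_{i},\ e_{i})-(\nabla F_{\ast })(Pe_{i},\ Pe_{i})\big]=-h_{ii}\,Q\{S_{Q}(e_{i}^{\prime },\ e_{i}^{\prime })-\lambda F_{\ast }(S_{P}(e_{i},\ e_{i}))\}$, the two copies of $(\nabla F_{\ast })(e_{i},\ e_{i})$ cancelling, where $\lambda$ is exactly the sign appearing in $(3.4)$. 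Summing over $i$ and moving the linear maps $Q$ and $F_{\ast }$ outside the finite sum gives $\mathcal{T}(F)=-Q\{\sum _{i=1}^{m}h_{ii}S_{Q}(e_{i}^{\prime },\ e_{i}^{\prime })-\lambda F_{\ast }(\sum _{i=1}^{m}h_{ii}S_{P}(e_{i},\ e_{i}))\}$.

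It then remains to identify $\sum _{i=1}^{m}h_{ii}S_{P}(e_{i},\ e_{i})$ with $\func{div}(P)$. By Lemma $(2.7)$ one has $S_{P}(e_{i},\ e_{i})=(\nabla _{e_{i}}P)e_{i}-(\nabla _{Pe_{i}}P)(Pe_{i})$, while the divergence, computed in the orthonormal frame $\{e_{1},\dots ,e_{m},\ Pe_{1},\dots ,Pe_{m}\}$, is $\func{div}(P)=\sum _{i}h_{ii}(\nabla _{e_{i}}P)e_{i}+\sum _{i}h(Pe_{i},\ Pe_{i})(\nabla _{Pe_{i}}P)(Pe_{i})=\sum _{i}h_{ii}\big[(\nabla _{e_{i}}P)e_{i}-(\nabla _{Pe_{i}}P)(Pe_{i})\big]$, again using $h(Pe_{i},\ Pe_{i})=-h_{ii}$; the two expressions coincide, which gives the second displayed equality. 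The one genuinely delicate step is the first — constructing a $P$-adapted orthonormal frame, which is dictated by the hyperbolic (null-eigendistribution) nature of $h$ and is where Lemma $(2.2/A)$ is used; everything after that is careful bookkeeping of signs through $(3.4)$ and the definition of the divergence. Note that the hypothesis on $N$ (almost para-Hermitian or almost product Riemannian) is needed only so that Proposition $(3.2)$ applies, its hypotheses permitting $g$ to be either pure or hyperbolic with respect to $Q$; no Vidal or minimality assumption on the target enters here.
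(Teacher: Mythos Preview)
Your proof is correct and follows essentially the same route as the paper: express $\mathcal{T}(F)$ in a $P$-adapted orthonormal frame, use $h(Pe_i,Pe_i)=-h_{ii}$ to collapse the sum to $\sum_i h_{ii}[(\nabla F_\ast)(e_i,e_i)-(\nabla F_\ast)(Pe_i,Pe_i)]$, feed in $(3.4)$, and identify $\sum_i h_{ii}S_P(e_i,e_i)$ with $\func{div}(P)$. The only difference is that you spell out the construction of the frame and the divergence identification (the latter is recorded in the paper in Definition $(2.5)(A)(iii)$), whereas the paper takes both for granted.
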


\begin{proof}
: For an orthonormal frame field $\ \left\{
e_{_{1}},...,e_{m},Pe_{_{1}},...,Pe_{m}\right\} $ \ for \ $TM$ \ we have, by
definition,

\begin{equation}
\begin{tabular}{ll}
$\mathcal{T\ (}F\mathcal{)}$ & $\mathcal{=}\dsum\limits_{i=1}^{m}\left\{
h\left( e_{i},\ e_{i}\right) \left( \nabla F_{\ast }\right) \left( e_{i},\
e_{i}\right) +h\left( Pe_{i},\ Pe_{i}\right) \left( \nabla F_{\ast }\right)
\left( Pe_{i},\ Pe_{i}\right) \right\} $ \\ 
& $=\dsum\limits_{i=1}^{m}h_{ii}\left\{ \left( \nabla F_{\ast }\right)
\left( e_{i},\ e_{i}\right) -\left( \nabla F_{\ast }\right) \left( Pe_{i},\
Pe_{i}\right) \right\} $%
\end{tabular}
\tag{$\left( 3.9\right) $}
\end{equation}%
On the other hand, from Proposition $\left( 3.2\right) ,$ \ we have

\begin{equation*}
\left( \nabla F_{\ast }\right) \left( Pe_{i},\ Pe_{i}\right) =\left( \nabla
F_{\ast }\right) \left( e_{i},\ e_{i}\right) +Q\left\{ S_{Q}\left(
e_{i}{}^{\prime },e_{i}^{\prime }\right) -\lambda F_{\ast }\left[
S_{P}\left( e_{i}\ ,e_{i}\right) \right] \right\}
\end{equation*}%
Replacing this into \ $\left( 3.8\right) $ \ we get

\begin{equation*}
\begin{tabular}{ll}
$\mathcal{T\ (}F\mathcal{)}$ & $=-Q\left\{ \overset{m}{\underset{i=1}{\sum }}%
h_{ii}\left[ S_{Q}\left( e_{i}^{\prime },\ e_{i}^{\prime }\right) -\lambda
F_{\ast }\left( S_{P}\left( e_{i}\ ,e_{i}\right) \right) \right] \right\} $
\\ 
& $=-Q\left\{ \overset{m}{\underset{i=1}{\sum }}h_{ii}\left[ S_{Q}\left(
e_{i}^{\prime },\ e_{i}^{\prime }\right) \right] -\lambda F_{\ast }\left( 
\overset{m}{\underset{i=1}{\sum }}h_{ii}S_{P}\left( e_{i}\ ,e_{i}\right)
\right) \right\} $ \\ 
& $=-Q\left\{ \overset{m}{\underset{i=1}{\sum }}h_{ii}S_{Q}\left(
e_{i}^{\prime },\ e_{i}^{\prime }\right) -\lambda F_{\ast }\left( \func{div}%
\left( P\right) \right) \right\} $%
\end{tabular}%
\end{equation*}

\begin{theorem}
$\left( 3.2/A\right) :$ \ Let \ $F:\left( M,\ h,\ P\right) \rightarrow
\left( N,\ g,\ Q\right) $ \ be a \ $\pm \left( P,\ Q\right) $-holomorphic
map from an almost para-Hermitian manifold $\ \ \left( M,\ h,\ P\right) \ \ $%
into\ an almost para-Hermitian manifold or an almost product Riemannian
manifold \ $\left( N,\ g,\ Q\right) .$\ \ If either
\end{theorem}
\end{proof}

$i)$ \ $\left( \left[ \mathbf{2,\ 7,\ 11}\right] \right) ,\ \left( M,\ h,\
P\right) $ \ is a semi para-Kaehler manifold and \ $\left( N,\ g,\ Q\right) $
\ is a quasi\ para-Kaehler manifold,

or

$ii)$ \ \ $\left( M,\ h,\ P\right) $ \ is a semi para-Kaehler manifold and \ 
$\left( N,\ g,\ Q\right) $ \ is a\ locally decomposable product Riemannian
manifold,

then \ $F$ \ is harmonic.

\begin{proof}
:
\end{proof}

For a local orthonormal frame field $\left\{
e_{_{1}},...,e_{m},Pe_{_{1}},...,Pe_{m}\right\} $ \ for \ $TM$ \ we have, by
Proposition $\left( 3.3\right) $ \ that,

\begin{equation*}
\begin{tabular}{ll}
$\mathcal{T\ (}F\mathcal{)}$ & $=-Q\left\{ \overset{m}{\underset{i=1}{\sum }}%
h_{ii}S_{Q}\left( e_{i}^{\prime },\ e_{i}^{\prime }\right) -\lambda F_{\ast
}\left( \func{div}\left( P\right) \right) \right\} .$%
\end{tabular}%
\end{equation*}%
But then, $\lambda F_{\ast }\left( \func{div}\left( P\right) \right) =0$ \
since$\left( M,\ h,\ P\right) $ \ is semi para-Kaehler and \ $S_{Q}\left(
e_{i}^{\prime },\ e_{i}^{\prime }\right) =0$ $\ $since $\ \left( N,\ g,\
Q\right) $ \ is either quasi\ para-Kaehler or\ locally decomposable product
Riemannian. So harmonicity of \ $F$ \ follows.

\begin{theorem}
$\left( 3.2/B\right) :$ \ \ For a \ $\pm \left( G,\ K\right) $-golden map \ $%
F:\left( M,\ h,\ G\right) \rightarrow \left( N,\ g,\ K\right) $ \ from an
almost golden-Hermitian manifold $\ \ \left( M,\ h,\ G\right) \ \ $into an
almost golden-Hermitian manifold or an almost golden Riemannian manifold \ $%
\left( N,\ g,\ K\right) ,$\ \ \ if either
\end{theorem}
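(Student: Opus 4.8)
The plan is to reduce Theorem $(3.2/B)$ to Theorem $(3.2/A)$ by passing to twin product structures, exactly as Theorem $(3.1/B)$ was reduced to Theorem $(3.1/A)$. By analogy with part $A$, I expect the two alternative hypotheses to be: $(i)$ $(M,\ h,\ G)$ is a semi golden-Kaehler manifold and $(N,\ g,\ K)$ is a quasi golden-Kaehler manifold; or $(ii)$ $(M,\ h,\ G)$ is a semi golden-Kaehler manifold and $(N,\ g,\ K)$ is a locally decomposable golden Riemannian manifold; with the conclusion that $F$ is harmonic.

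First I would let $P_{G}$ and $Q_{K}$ denote the twin product structures of $G$ and $K$, so that $\{G,\ P_{G}\}$ and $\{K,\ Q_{K}\}$ are twin pairs. By Proposition $(2.2)$, the manifold $(M,\ h,\ G)$ is almost golden-Hermitian if and only if $(M,\ h,\ P_{G})$ is almost para-Hermitian, and $(N,\ g,\ K)$ is almost golden-Hermitian (resp. almost golden Riemannian) if and only if $(N,\ g,\ Q_{K})$ is almost para-Hermitian (resp. almost product Riemannian). By Proposition $(3.1)$, the map $F$ is $\pm(G,\ K)$-golden if and only if it is $\pm(P_{G},\ Q_{K})$-paraholomorphic. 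Thus the underlying map and the ambient manifolds of Theorem $(3.2/B)$ match those of Theorem $(3.2/A)$ once $G$ is replaced by $P_{G}$ and $K$ by $Q_{K}$.

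Next I would match the extra hypotheses through the twin dictionary. Being semi golden-Kaehler means $\func{div}G=0$ by Definition $(2.5/B)$, which by Remark $(3.1)$, part $2$, is equivalent to $\func{div}P_{G}=0$, i.e. $(M,\ h,\ P_{G})$ is semi para-Kaehler. Being quasi golden-Kaehler for $(N,\ g,\ K)$ is, by Definition $(2.5/B)$, literally the statement $S_{Q_{K}}=0$ (the twin product structure of $K$ being $\mathcal{R}=Q_{K}$), which is exactly the statement that $(N,\ g,\ Q_{K})$ is quasi para-Kaehler. And $(N,\ g,\ K)$ being a locally decomposable golden Riemannian manifold means, by Definition $(2.4/B)$, that $K$ is parallel, which by Corollary $(2.2/A)$ holds if and only if $\nabla Q_{K}=0$, i.e. $(N,\ g,\ Q_{K})$ is a locally decomposable product Riemannian manifold. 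Hence, under hypothesis $(i)$ or $(ii)$, the map $F:(M,\ h,\ P_{G})\to(N,\ g,\ Q_{K})$ satisfies hypothesis $(i)$ or $(ii)$ of Theorem $(3.2/A)$; since harmonicity of $F$ is a property of the underlying map of $(\text{semi})$ Riemannian manifolds and makes no reference to the auxiliary $(1,1)$-tensor fields, Theorem $(3.2/A)$ gives that $F$ is harmonic, which is the desired conclusion.

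I do not anticipate a genuine obstacle; the only place needing care is the translation dictionary, specifically the equivalences semi golden-Kaehler $\Leftrightarrow$ semi para-Kaehler (via $\func{div}G=0\Leftrightarrow\func{div}P_{G}=0$) and locally decomposable golden $\Leftrightarrow$ locally decomposable product (via $\nabla K=0\Leftrightarrow\nabla Q_{K}=0$), together with the observation that the definition of quasi golden-Kaehler is already phrased in terms of $S_{Q_{K}}$ so nothing is needed there. An alternative but longer route would be to prove a golden analogue of Proposition $(3.3)$ directly, expressing $\mathcal{T}(F)$ via $S_{K}$, $S_{G}$ and $\func{div}G$, and then argue as in Theorem $(3.2/A)$; but the twin reduction is shorter and matches the stated aim of the paper to express golden results through product ones.
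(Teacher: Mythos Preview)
Your proposal is correct and follows essentially the same approach as the paper: reduce to Theorem $(3.2/A)$ by passing to the twin product structures $P_{G}$, $Q_{K}$ and invoking the dictionary between golden and product notions. The paper's proof is terser (it only cites the reduction and Remark $(3.1)/2$), while you spell out explicitly which equivalences (Proposition $(2.2)$, Proposition $(3.1)$, Corollary $(2.2/A)$, Definitions $(2.4/B)$ and $(2.5/B)$) carry each hypothesis across, but the argument is the same.
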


$i)$ \ $\left( M,\ h,\ G\right) $ \ is a semi golden-Kaehler manifold and \ $%
\left( N,\ g,\ K\right) $ \ is a quasi\ golden-Kaehler manifold,

or

$ii)$ \ \ $\left( M,\ h,\ G\right) $ \ is a semi golden-Kaehler manifold and
\ $\left( N,\ g,\ K\right) $ \ is a\ locally decomposable golden Riemannian
manifold,

then \ $F$ \ is harmonic.

\begin{proof}
: \ Let \ $\ P_{G}$ \ and \ $Q_{K}$ \ denote the twin product structures of
\ $G$ \ and \ $K$ \ \ respectively. Then by Lemma \ $\left( 2.1\right) $ \
and Remark $\left( 3.1\right) /2$ \ the hypothesis of this theorem becomes
equivalent to the hypothesis of Theorem $\left( 3.2/A\right) $, namely:
\end{proof}

"Let \ $F:\left( M,\ h,\ P_{G}\right) \rightarrow \left( N,\ g,\
Q_{K}\right) $ \ be a \ $\pm \left( P_{G},\ Q_{K}\right) $-holomorphic map
from a semi Kaehler manifold \ $\left( M,\ h,\ P_{G}\right) $ \ \ into
either a quasi\ para-Kaehler manifold or a\ locally decomposable product
Riemannian manifold."

Then the harmonicity of \ $F$ \ follows from Theorem $\left( 3.2/A\right) $. 
$\ \ $

\begin{remark}
$\left( 3.2\right) :$ \ For a non-constant map \ $F:\left( M,\ h,\ \varphi
\left( =P,G\right) \right) \rightarrow \left( N,\ g,\ \psi \left(
=Q,K\right) \right) ,$ \ when \ $h$ \ is a pure metric (with respect to $%
\varphi )$ \ the $\ \pm $\ paraholomorphicity of \ $F$ \ ( or \ $F$ \ being
a $\pm $golden ) is not much of a help for the harmonicity of \ $F$. The
best results we seem to get are Theorems $\left( 3.1\right) /A$ \ and \ $%
\left( 3.1\right) /B.$ \ On the other hand, when \ $h$ \ is hyperbolic, then 
$\pm \ $paraholomorphicity of \ $F$ \ ( or \ $F$ \ being a $\pm \ $golden )
gives its harmonicity under certain conditions as Theorems $\left(
3.2\right) /A$ \ and \ $\left( 3.2\right) /B$ \ show. On these lines we
provide the following example:
\end{remark}

\begin{example}
$\left( 3.1\right) :$ On $\ 
\mathbb{R}
^{2}$\ \ for $X=\left( x_{_{1}},\ x_{_{2}}\right) ,\ \ Y=\left( y_{_{1}},\
y_{_{2}}\right) \in \Gamma \left( T%
\mathbb{R}
^{2}\right) ,$ \ define 
\begin{equation*}
h\left( X\ ,Y\right) =\overset{2}{\underset{i=1}{\sum }}x_{i}y_{i}\text{ \ \
\ \ and \ \ }P\left( X\right) =\left( x_{_{1}},\ -x_{_{2}}\right) ,\text{ \
\ \ \ }G\left( X\right) =\left( \sigma x_{_{1}},\ \bar{\sigma}%
x_{_{2}}\right) .
\end{equation*}%
Then \ $\left( 
\mathbb{R}
^{2},\ h,\ P\right) $ \ becomes locally decomposable product Riemannian
manifold and \ $\left( 
\mathbb{R}
^{2},\ h,\ G\right) $ \ becomes locally decomposable golden Riemannian
manifold. Moreover, $\left( 
\mathbb{R}
^{2},\ h,\ P\right) $ \ and \ $\left( 
\mathbb{R}
^{2},\ h,\ G\right) $ \ are twin manifolds as $\left\{ P,\ G\right\} $ \
form a twin pair on \ $%
\mathbb{R}
^{2}.$ \ Let \ \ $f:\left( 
\mathbb{R}
^{2},\ h,\ \varphi \left( =P,G\right) \right) \rightarrow \left( 
\mathbb{R}
^{2},\ h,\ \varphi \left( =P,G\right) \right) $ \ be defined by 
\begin{equation*}
f\left( s,\ t\right) =\left( s,\ e^{t}\right) .
\end{equation*}%
Observe that
\end{example}

\begin{itemize}
\item \ $f$ \ is \ $\left( P,P\right) $-paraholomorphic and also\ $\left(
G,G\right) $-golden and yet

\item $f$ \ is not harmonic since \ 
\begin{equation*}
\mathcal{T(}f\mathcal{)=}\frac{\partial ^{2}f}{\partial s^{2}}+\frac{%
\partial ^{2}f}{\partial t^{2}}=\left( 0,\ 0\right) +\left( 0,\ e^{t}\right)
=\left( 0,\ e^{t}\right) .
\end{equation*}
\end{itemize}

\textbf{References}

\begin{description}
\item[{$\left[ 1\right] $}] Baird P, Wood. JC. \textit{Harmonic Morphism
Between Riemannian Manifolds}. London Math. Soc. Monographs New Series.
Oxford, UK: Clarendon Press 2003.

\item[{$\left[ 2\right] $}] Bejan CL, Benyounes M. \textit{Harmonic Maps
Between Para-Hermitian Manifolds}. New Developments in Differential Geometry
(Budapest 1996) 67-76. Kluwer Academic, Dordrecht.

\item[{$\left[ 3\right] $}] Crasmareanu M, Hretcanu. C. \textit{Golden
Differential Geometry}. Chaos, Solitons and Fractals. 38(2008), 1229-1238.

\item[{$\left[ 4\right] $}] Hretcanu C, Crasmareanu M. \textit{Applications
of the golden ratio on Riemannian manifolds}. Turk J Math 2009; 33: 179 -
191.

\item[{$\left[ 5\right] $}] Cruceanu V, Gadea PM, Masque JM. \textit{%
Para-Hermitian and Para-Kaehler Manifolds}. Quaderni dell'Istituto di
Matematica, Universita di Messina 1995.

\item[{$\left[ 6\right] $}] Eells J,.Lemaire L. \textit{Selected Topics in
Harmonic Maps}. CBMS Regional Conf.Series in Mathematics No 50, Amer. Math.
Soc.1983.

\item[{$\left[ 7\right] $}] Erdem S. \textit{On Almost(Para) Contact
(Hyperbolic) Metric Manifolds and Harmonicity of }$\left( \varphi ,\varphi
\right) $\textit{-Holomorphic Maps Between Them}. Houston J of Math 2002;
28: 21-45.

\item[{$\left[ 8\right] $}] Gezer A, Cengiz N, Salimov A. \textit{On
Integrability Of Golden Riemannian Structures}. Turk. J. Math 2013; 37:
693-703.

\item[{$\left[ 9\right] $}] De Leon M, Rodrigues PR.: \textit{Methods Of
Differential Geometry In Analytic Mechanics.} New York, NY, USA: Elsevier
Science Publihsers, 1989.

\item[{$\left[ 10\right] $}] Montesinos A. \textit{On Certain Classes Of
Almost Product Structures}. Michigan Math J 1983; 30: 31-36.

\item[{$\left[ 11\right] $}] Parmar VJ. \textit{Harmonic Morphisms Between
Semi-Riemannian Manifolds}. PhD, The University of Leeds, Leeds, UK, 1991.

\item[{$\left[ 12\right] $}] Salimov A, \.{I}scan M, Etayo F. \textit{%
Paraholomorphic B-Manifold and Its Properties}. Topology and Its
Applications 2007;154: 925-933.

\item[{$\left[ 13\right] $}] \c{S}ahin B, Akyol MA. \textit{Golden Maps
Between Golden Riemannian Manifolds And Constancy Of Some Maps}. Math
Commun. 2014; 19: 333-342.
\end{description}

\begin{center}
\bigskip
\end{center}

\end{document}